\documentclass[12pt]{amsart}
\usepackage{txfonts}
\usepackage{amsmath,amssymb,amsfonts,amscd,mathtools}
\usepackage{eucal,mathrsfs}
\usepackage[all]{xy}
\usepackage{xspace}
\usepackage{graphicx}
\usepackage{float}
\usepackage{color}
\usepackage{fancybox}
\usepackage{multicol}
\usepackage{tikz}
\usepackage{enumitem}
\usepackage{microtype}
\usepackage[colorlinks,final,backref=page,hyperindex]{hyperref}

\newtheorem{theorem}{Theorem}[section]
\newtheorem{proposition}[theorem]{Proposition}
\newtheorem{lemma}[theorem]{Lemma}
\newtheorem{corollary}[theorem]{Corollary}

\theoremstyle{definition}
\newtheorem{definition}[theorem]{Definition}
\newtheorem{example}[theorem]{Example}

\newtheorem{remark}[theorem]{Remark}

\newcommand{\kfield}{\mathbf{k}}
\newcommand{\U}{\mathcal{U}}
\newcommand{\Hom}{\operatorname{Hom}}
\newcommand{\Ext}{\operatorname{Ext}}
\newcommand{\Tor}{\operatorname{Tor}}
\newcommand{\im}{\operatorname{Im}}
\newcommand{\id}{\operatorname{id}}

\allowdisplaybreaks
\begin{document}

\title[Modules and operator rings of Reynolds algebras]{Modules and Operator Rings of Reynolds Algebras}

\author{Jun He}
\address{School of Mathematics and Statistics, Nanjing University of Information Science \& Technology, Nanjing, Jiangsu 210044, P. R. China}
\email{he\_junn@163.com}

\author{Xiao-Song Peng}
\address{School of Mathematics and Statistics, Jiangsu Normal University, Xuzhou, Jiangsu 221116, P. R. China}
\email{pengxiaosong3@163.com}

\author{Chenghao Zhang}
\address{School of Mathematics and Statistics, Nanjing University of Information Science \& Technology, Nanjing, Jiangsu 210044, P. R. China}
\email{zchenghao@nuist.edu.cn}

\author{Yi Zhang}
\address{School of Mathematics and Statistics, Nanjing University of Information Science \& Technology, Nanjing, Jiangsu 210044, P. R. China}
\email{zhangy2016@nuist.edu.cn}

\date{}

\hypersetup{
  linkcolor=red,
  citecolor=green,
  urlcolor=cyan,
  pdftitle={Modules and Operator Rings of Reynolds Algebras},
  pdfauthor={Jun He; Xiao-song Peng; Chenghao Zhang; Yi Zhang},
  pdfsubject={Modules and operator rings of Reynolds algebras},
  pdfkeywords={Reynolds algebra, Reynolds module, ring of Reynolds operators, tensor product, Morita equivalence, differential module, derived functor}
}

\begin{abstract}
We develop a module theory for weighted Reynolds algebras by constructing
an associative ring that records the algebra action and the module
operator. The categories of left and right Reynolds modules are equivalent
to the corresponding module categories over this ring, and their tensor
products agree under these equivalences. We construct free Reynolds
modules and give an explicit graded description of the operator ring for
idempotent classical Reynolds operators fixing the identity. When the
Reynolds operator on the algebra is bijective, modules with bijective
module operators correspond to differential modules satisfying an
invertibility condition.
The operator ring transfers the Hom-tensor adjunction, Morita theory and
derived functors to Reynolds modules. Explicit computations over a field
show that the identity operator gives an operator ring of global dimension
one, whereas the zero operator gives one of infinite global dimension.
\end{abstract}

\subjclass[2020]{16W99, 16S10, 16D40, 13N15, 18G10}
\keywords{Reynolds algebra, Reynolds module, ring of Reynolds operators, tensor product, Morita equivalence, differential module, derived functor}

\maketitle

\setcounter{tocdepth}{2}
\tableofcontents

\vspace{-1cm}

\setcounter{section}{0}

\section{Introduction}\label{sec:introduction}

The module theory of a Reynolds algebra depends on both the algebra
action and the Reynolds operator. To describe this dependence, we seek an
associative ring whose modules recover Reynolds modules and whose
structure reflects the operator. We construct this ring for every weighted
Reynolds algebra and determine it explicitly for idempotent classical
Reynolds operators fixing the identity. Its module categories provide free
Reynolds modules, tensor products and derived functors.

\subsection{Reynolds operators}\label{subsec:intro-operators}

Reynolds operators arose from averaging procedures in the statistical
theory of turbulent flow \cite{Reynolds1895}. Their early algebraic theory
was developed in \cite{Dubreil1957,BillikRota1960,Birkhoff1961,Rota1964},
and averaging and Reynolds operators on Banach algebras were considered
in \cite{Miller1966}. In invariant theory, Reynolds operators occur as
projections onto invariant subalgebras
\cite{MumfordFogartyKirwan1994,DerksenKemper2015}.

Let $\kfield$ be a commutative ring with identity and let $R$ be an
associative unital $\kfield$-algebra. A classical Reynolds operator on $R$ is a linear map
$P:R\to R$ satisfying
\begin{equation}\label{eq:intro-reynolds}
 P(x)P(y)
 =
 P(xP(y))+P(P(x)y)-P(P(x)P(y))
\end{equation}
for all $x,y\in R$. Finite-group averaging and normalized integration
provide standard examples.

The weighted version considered in this paper replaces the last term in
Eq.~(\ref{eq:intro-reynolds}) by
$\lambda P(P(x)P(y))$, where $\lambda\in\kfield$. Thus
$P$ satisfies
$$
 P(x)P(y)
 =
 P(xP(y))+P(P(x)y)+\lambda P(P(x)P(y)).
$$
For $\lambda=-1$, this is the classical Reynolds identity
in Eq.~(\ref{eq:intro-reynolds}), while for $\lambda=0$ it becomes the
Rota-Baxter identity of weight zero. Weighted Reynolds identities and
free Reynolds algebras were considered in \cite{ZhangGaoGuo2021}.
Free commutative Reynolds algebras were subsequently constructed by
Gr\"obner-Shirshov methods in \cite{ZhangGaoGuo2023}. Generalized
Reynolds identities arising from Volterra integral operators have
also been considered in \cite{GuoGustavsonLi2024}.

The specialization at weight zero makes Rota-Baxter algebra theory a natural
point of comparison. Rota-Baxter algebras have a well-developed structure
theory \cite{Baxter1960,Atkinson1963,Guo2012}, and their modules and the
corresponding ring of Rota-Baxter operators have been studied in
\cite{QiaoGaoGuo2019,GuoLin2021}. However, the weighted Reynolds identity
contains the term $P(P(x)P(y))$, and this leads to module identities and
operator-ring relations different from those for Rota-Baxter algebras.

Reynolds operators have also been considered on Lie algebras and
$n$-Lie algebras \cite{Das2021Lie,HouSheng2023}, and in connection with
relative cocycle weighted operators and related structures on pre-Lie
algebras \cite{GuoZhang2023}. The present paper concerns modules over
associative Reynolds algebras.

We also record a structural feature of idempotent Reynolds operators.
For the classical weight $-1$, idempotent Reynolds operators are precisely
idempotent averaging operators \cite{PeiGuo2015}. More generally, we show that an idempotent
linear map $P$ is a Reynolds operator of weight $\lambda$ if and only if
it is an averaging operator and
$(1+\lambda)\operatorname{Im}P=0$. Consequently, if the base ring is a
field, a nonzero idempotent Reynolds operator can occur only at weight
$-1$.

\subsection{Reynolds modules}\label{subsec:intro-modules}

For an algebra with a linear operator, a module carries a linear operator
compatible with the algebra action. In the Reynolds case, this
compatibility is expressed by the Reynolds module identity introduced
below. The corresponding theories for
Rota-Baxter and differential algebras provide useful points of comparison:
free, projective, injective and flat Rota-Baxter modules were studied in
\cite{QiaoGaoGuo2019}, the description of Rota-Baxter modules by an associative ring
was developed in \cite{GuoLin2021}, and the representation theory
was recently extended to multiple Rota-Baxter algebras and modules in
\cite{HPZ}. Differential modules and their derived functors were considered
in \cite{QiaoPei2026}.

Let $(R,P)$ and $(S,T)$ be weighted Reynolds algebras. We introduce
left and right $(R,P)$-modules and
$(S,T)$-$(R,P)$-bimodules. For right modules, we use the convention
compatible with right actions of the same operator ring that describes
left modules. The right module identity induced by the opposite algebra
is related to this convention by a sign change in the module operator.
We also extend the Reynolds double product to left modules.

A first problem is the construction of free Reynolds modules. Starting from
free operated $R$-modules, we construct free left $(R,P)$-modules by
quotienting out the submodule generated by the Reynolds module relations.
We also show that ordinary free $R$-modules equipped with the coefficientwise
operator satisfy a restricted universal property.

The main structural tool of the paper is an associative ring that encodes
the module operator. Let $\kfield[Q]$ be the polynomial algebra in one
indeterminate $Q$. For a Reynolds algebra $(R,P)$ of weight $\lambda$, we
define the \emph{ring of Reynolds operators} $\U_{R,P}$ as the quotient of
the free product $R*_{\kfield}\kfield[Q]$ by the relations
$$
 P(r)Q
 =
 QP(r)+Q(r+\lambda P(r))Q,
 \quad r\in R.
$$
\begin{theorem}[= Proposition~\ref{prop:enveloping-universal} and
Theorems~\ref{thm:category-equivalence} and~\ref{thm:tensor-enveloping}]
\label{thm:A}
For every Reynolds algebra $(R,P)$, the canonical map
$R\to\U_{R,P}$ is injective, so $R$ may be identified with a
subalgebra of $\U_{R,P}$. The category of left
$(R,P)$-modules is naturally equivalent to the category of left
$\U_{R,P}$-modules, and the analogous statement holds for right modules.
Moreover, $(S,T)$-$(R,P)$-bimodules correspond to
$\U_{S,T}$-$\U_{R,P}$-bimodules. For every right $(R,P)$-module $M$
and left $(R,P)$-module $N$, there is a natural isomorphism
$$
 M\otimes_{(R,P)}N
 \cong
 M\otimes_{\U_{R,P}}N.
$$
\end{theorem}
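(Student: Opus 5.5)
The plan is to prove the statements one at a time, using one observation throughout. The defining relation of $\U_{R,P}$, applied to an element $m$ of a module in which $Q$ acts by the module operator $p$, reads
$$
 P(r)p(m)=p(P(r)m)+p\bigl((r+\lambda P(r))p(m)\bigr)
 = p(P(r)m)+p(rp(m))+\lambda p(P(r)p(m)).
$$
I expect this to be exactly the left Reynolds module identity. So a left $\U_{R,P}$-module should be the same thing as an $R$-module with an operator satisfying that identity.

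\textbf{Injectivity of $R\to\U_{R,P}$.} I will use a faithful representation. The algebra $R$, with left multiplication and operator $P$, is a left $(R,P)$-module, since the module identity for $M=R$ is the weighted Reynolds identity. By the universal property of the free product $R*_{\kfield}\kfield[Q]$, the assignments $r\mapsto (x\mapsto rx)$ and $Q\mapsto P$ give an algebra map $R*_{\kfield}\kfield[Q]\to \operatorname{End}_{\kfield}(R)$. The computation above shows this map kills the defining relations, so it factors through $\U_{R,P}$. The composite $R\to\U_{R,P}\to\operatorname{End}_{\kfield}(R)$ sends $r$ to left multiplication by $r$. Evaluating at $1$ recovers $r$, so the composite is injective, and hence so is $R\to\U_{R,P}$.

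\textbf{Category equivalence.} Given a left $\U_{R,P}$-module, restrict the action along $R\to\U_{R,P}$ and let $p$ be the action of $Q$. The relation gives the Reynolds module identity. Conversely, for a left $(R,P)$-module $(M,p)$, the free product gives an action of $R*_{\kfield}\kfield[Q]$ on $M$, and the identity shows the relations act as zero. These two constructions are mutually inverse. A $\kfield$-linear map commutes with $\U_{R,P}$ if and only if it commutes with the generators $R$ and $Q$, that is, if and only if it is a morphism of $(R,P)$-modules. This gives an isomorphism of categories, natural in the obvious sense.

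For right modules I will run the same argument with $Q$ acting on the right. The paper's chosen sign convention for right modules is designed so that the right identity is exactly the relation read in the opposite order; I must check this against the definition. For bimodules, an $(S,T)$-$(R,P)$-bimodule is a left $(S,T)$-structure and a right $(R,P)$-structure that commute. Since commuting of the actions can be checked on generators ($S,T$ on one side and $R,Q$ on the other), this is the same as a $\U_{S,T}$-$\U_{R,P}$-bimodule.

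\textbf{Tensor product.} I take $M\otimes_{(R,P)}N$ to be $M\otimes_R N$ modulo the elements $mQ\otimes n-m\otimes Qn$; if the paper defines it by a universal property for balanced maps, I will compare universal properties instead. A biadditive map $f\colon M\times N\to A$ satisfies $f(mu,n)=f(m,un)$ for all $u\in\U_{R,P}$ if and only if it does so for $u\in R\cup\{Q\}$. One direction is trivial. For the other, the set of $u$ satisfying the balancing condition is closed under sums, $\kfield$-multiples and products: from $f(muv,n)=f(mu,vn)=f(m,uvn)$, using that $M$ is a right module so $(mu)v=m(uv)$. Since this set contains the generators, it is all of $\U_{R,P}$. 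Hence both tensor products represent the same functor of balanced maps, and the identity on $M\otimes_{\kfield}N$ descends to a natural isomorphism.

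I expect the only real obstacle to be the bookkeeping for the right-module sign convention. The rest follows from universal properties of free products and quotients.
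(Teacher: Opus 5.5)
Your proposal is correct and follows the paper's proof essentially step by step. It uses the same faithful representation $R*_{\kfield}\kfield[Q]\to\operatorname{End}_{\kfield}(R)$ with $Q\mapsto P$ and evaluation at $1_R$ for injectivity, and the same generator-by-generator argument for the left, right and bimodule correspondences. For the tensor product it uses the same observation that the set of $u\in\U_{R,P}$ with $f(mu,n)=f(m,un)$ is a subalgebra containing $R$ and $Q$. The right-module check you deferred does go through: with $mQ=p_M(m)$, the defining relation acting on the right reads $p_M(mP(r))=p_M(p_M(m)r)+p_M(m)P(r)+\lambda p_M(p_M(m)P(r))$, which is exactly the paper's definition of a right $(R,P)$-module.
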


Theorem~\ref{thm:A} reduces the categorical constructions for Reynolds modules
to those for modules over $\U_{R,P}$. In particular, the free Reynolds
module constructed from operated modules is identified with a free
$\U_{R,P}$-module.

For idempotent classical Reynolds operators fixing the identity, we can
describe the ring of Reynolds operators explicitly. Suppose that $P$ has weight $-1$,
$P^2=P$ and $P(1_R)=1_R$, and put $A=\operatorname{Im}P$. Then
$P:R\to A$ is an $A$-bimodule homomorphism, and the defining relations of
$\U_{R,P}$ simplify substantially.

\begin{theorem}[= Theorem~\ref{thm:idempotent-normal-form}]\label{thm:B}
Under the above assumptions, there is a graded algebra isomorphism
$$
 \U_{R,P}
 \cong
 R\oplus\bigoplus_{n\geq1}(R\otimes_A R)t^n.
$$
Here $(R\otimes_A R)t^n$ denotes a copy of $R\otimes_A R$ in
degree $n$, and $(r\otimes s)t^n$ corresponds to $r\otimes_A s$.
The degree-zero part has the original multiplication of $R$,
and $R$ acts on the positive-degree components through the two tensor
factors. For $r,s,u,v\in R$ and $i,j\geq1$,
$$
 ((r\otimes s)t^i)((u\otimes v)t^j)
 =
 (rP(su)\otimes v)t^{i+j}.
$$
\end{theorem}

For example, let a finite group $G$ act on $R$ by algebra automorphisms
and assume that $|G|$ is invertible in $\kfield$. For the averaging
operator associated with this action, the positive-degree components are
copies of $R\otimes_{R^G}R$, where $R^G$ is the subalgebra of
$G$-invariant elements.

The relation between bijective classical Reynolds operators and
derivations is recalled in \cite{ZhangGaoGuo2021}. For a bijective
Reynolds operator $P$ of weight $\lambda$, the map
$\delta=P^{-1}+\lambda\id_R$ is a derivation. We establish the
following correspondence for modules.

\begin{theorem}[= Theorem~\ref{thm:invertible-equivalence}]\label{thm:C}
Let $(R,P)$ be a Reynolds algebra of weight $\lambda$ with $P$
bijective, and let $\delta=P^{-1}+\lambda\id_R$. The assignment
$$
 (M,p_M)
 \longmapsto
 (M,p_M^{-1}+\lambda\id_M)
$$
defines an equivalence between the full subcategory of left $(R,P)$-modules
with bijective module operator and the full subcategory of left
differential $(R,\delta)$-modules $(M,d)$ for which
$d-\lambda\id_M$ is bijective.
\end{theorem}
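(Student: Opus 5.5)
The plan is to verify the correspondence directly at the level of objects and morphisms, and to show that the two assignments are mutually inverse and act as the identity on underlying $R$-modules and maps. This gives an isomorphism, hence an equivalence, of the full subcategories.

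I use the left Reynolds module identity in the form analogous to the algebra identity:
$$
P(r)p_M(m)=p_M\bigl(r\,p_M(m)+P(r)m+\lambda P(r)p_M(m)\bigr),\qquad r\in R,\ m\in M.
$$
A left differential $(R,\delta)$-module is an $R$-module $M$ with a $\kfield$-linear map $d$ satisfying $d(rm)=\delta(r)m+r\,d(m)$. If the paper's convention differs, the signs below must be adjusted accordingly.

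\textbf{Forward direction.} Let $(M,p_M)$ have $p_M$ bijective, and put $d=p_M^{-1}+\lambda\id_M$. Then $d-\lambda\id_M=p_M^{-1}$ is bijective, so it remains to prove the Leibniz rule. Since $P$ and $p_M$ are bijective, every pair $(r,m)$ can be written as $r=P(s)$ and $m=p_M(n)$. Applying $p_M^{-1}$ to the module identity gives
$$
p_M^{-1}\bigl(P(s)p_M(n)\bigr)=s\,p_M(n)+P(s)n+\lambda P(s)p_M(n).
$$
Adding $\lambda P(s)p_M(n)$ yields
$$
d\bigl(P(s)p_M(n)\bigr)=s\,p_M(n)+P(s)n+2\lambda P(s)p_M(n).
$$
On the other side, $\delta(P(s))=s+\lambda P(s)$ and $d(p_M(n))=n+\lambda p_M(n)$. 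Hence
$$
\delta(P(s))p_M(n)+P(s)\,d(p_M(n))
$$
expands to the same expression. This proves $d(rm)=\delta(r)m+r\,d(m)$ for all $r,m$.

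\textbf{Reverse direction.} Given $(M,d)$ with $d-\lambda\id_M$ bijective, set $p_M=(d-\lambda\id_M)^{-1}$. Run the computation backwards, again writing $r=P(s)$ and $m=p_M(n)$. The Leibniz rule, after subtracting $\lambda P(s)p_M(n)$ and applying $p_M$, gives exactly the Reynolds module identity. The two assignments are clearly inverse on objects: $p\mapsto p^{-1}+\lambda\mapsto(p^{-1})^{-1}=p$, and conversely.

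\textbf{Morphisms.} An $R$-linear map $f:M\to N$ satisfies $f p_M=p_N f$ if and only if $f p_M^{-1}=p_N^{-1}f$, because both operators are bijective. This in turn holds if and only if $f d_M=d_N f$. So the functors, defined as the identity on morphisms, are well defined, fully faithful, and mutually inverse.

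\textbf{Main obstacle.} The only delicate point is matching the exact form and sign conventions of the paper's left Reynolds module identity and differential module definition. I would first check the computation against the algebra case $M=R$, $p_M=P$, $d=\delta$, where it must reduce to the known fact that $\delta=P^{-1}+\lambda\id_R$ is a derivation, and then adjust signs if necessary.
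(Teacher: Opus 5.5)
Your proposal is correct and follows essentially the same route as the paper. Both use bijectivity of $P$ and $p_M$ to reparametrize, apply $p_M^{-1}$ to the module identity, and compare with the Leibniz rule; both handle morphisms via $f p_M = p_N f \iff f p_M^{-1} = p_N^{-1} f$. Your sign conventions for the left Reynolds module identity and for differential modules coincide with the paper's, so no adjustment is needed.
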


The equivalence in Theorem~\ref{thm:A} also transfers standard categorical
constructions to Reynolds modules. We define tensor products
over a Reynolds algebra and identify them with tensor products over
$\U_{R,P}$. This yields the Hom-tensor adjunction and Reynolds versions
of the Eilenberg-Watts theorem and Morita theory. As a concrete
application, if $P$ has weight $-1$ and $P(1_R)=1_R$, then for the
entrywise Reynolds operator $P_n$ on $M_n(R)$ we obtain
$$
 \U_{M_n(R),P_n}
 \cong
 M_n(\U_{R,P}),
$$
and hence the corresponding Reynolds module categories are Morita
equivalent.

Finally, we obtain the homological constructions for Reynolds modules
from the corresponding constructions over $\U_{R,P}$.
Projective, injective and flat Reynolds modules correspond precisely to the
respective classes of modules over $\U_{R,P}$. We define
$\Ext^n_{(R,P)}$ and $\Tor_n^{(R,P)}$ and identify them with the ordinary
derived functors over $\U_{R,P}$. For Reynolds bimodules, the Hom-tensor
formalism gives Grothendieck spectral sequences for $\Ext$ and $\Tor$,
together with derived versions of tensor associativity and the Hom-tensor
adjunction.

Even for a fixed underlying algebra, the homological behavior can depend
strongly on the Reynolds operator. Over a field $\kfield$, the Reynolds
algebras $(\kfield,\id_{\kfield})$ and $(\kfield,0)$ of weight $-1$
have rings of
Reynolds operators
$$
 \U_{\kfield,\id}\cong\kfield[Q]
 \quad\text{and}\quad
 \U_{\kfield,0}\cong\kfield[Q]/(Q^2),
$$
respectively. The former has global dimension $1$, whereas the latter has
infinite global dimension, as computed in
Proposition~\ref{prop:one-dimensional-derived}.

\medskip
\noindent\textbf{Organization of this paper.}
Section~\ref{sec:modules} introduces Reynolds modules and their basic
constructions, including the correspondence with differential modules.
Section~\ref{sec:free} constructs free Reynolds modules and considers a
restricted universal property for ordinary free modules.
Section~\ref{sec:enveloping} constructs the ring of Reynolds operators,
identifies its module categories and proves the graded description for
idempotent operators fixing the identity.
Section~\ref{sec:tensor} develops tensor products, the Hom-tensor
adjunction and Morita theory, with an application to matrix algebras.
Section~\ref{sec:homological} treats projective, injective and flat
modules and their derived functors, and concludes with explicit
computations over a field.

\medskip
\noindent\textbf{Notation.}
Throughout the paper, $\kfield$ denotes a commutative ring with identity.
Unless otherwise specified, a module means a $\kfield$-module, linear and
bilinear maps are $\kfield$-linear and $\kfield$-bilinear, respectively,
and a linear functor is $\kfield$-linear. Tensor products without a
subscript are taken over $\kfield$. All algebras are associative unital
$\kfield$-algebras, and modules over algebras are unital unless stated
otherwise. A subalgebra need not contain the identity of the ambient
algebra. The unit of an algebra $R$ is denoted by $1_R$, and $\id_M$
denotes the identity map of a module $M$.

\section{Reynolds algebras and Reynolds modules}\label{sec:modules}

In this section, we first recall weighted Reynolds algebras.
We then introduce the concepts of Reynolds modules and bimodules,
give their basic constructions and relate modules with bijective
operators to differential modules.

\subsection{Weighted Reynolds algebras}\label{subsec:weighted}

In this subsection, we recall weighted Reynolds algebras and then study idempotent Reynolds operators.

\begin{definition}[{\cite{ZhangGaoGuo2021}}]\label{def:reynolds-algebra}
Let $\lambda\in\kfield$. A \textbf{Reynolds algebra of weight $\lambda$} is a pair $(R,P)$ consisting of an algebra $R$ and a linear map $P:R\to R$ such that, for all $x,y \in R$,
\begin{equation}\label{eq:weighted-reynolds}
 P(x)P(y)=P(xP(y))+P(P(x)y)+\lambda P(P(x)P(y)).
\end{equation}

\end{definition}

At weight $-1$, Eq.~(\ref{eq:weighted-reynolds}) reduces to
Eq.~(\ref{eq:intro-reynolds}).

\begin{example}\label{ex:normalized-integral} \cite{Birkhoff1961,Rota1964}
Let $\kfield=\mathbb R$ or $\mathbb C$, let $(X,\Sigma,\mu)$ be a measure space with $0<\mu(X)<\infty$, and give $R=L^\infty(X,\mu)$ its pointwise product. Define
$$
 P(f)=\frac{1}{\mu(X)}\int_X f\,d\mu\,\mathbf 1.
$$
Here $\mathbf 1$ is the constant function with value $1$.
Then $P$ is an idempotent Reynolds operator of weight $-1$.
\end{example}

\begin{example}\label{ex:hardy-integral} \cite{ZhangGaoGuo2021}
Let $b>0$ and $R=C([0,b],\mathbb R)$. Define
$$
 P(f)(x)=
 \begin{cases}
 \displaystyle\frac{1}{x}\int_0^x f(t)\,dt,&x>0,\\[6pt]
 f(0),&x=0.
 \end{cases}
$$
Then $P(f)$ is continuous and $P$ is a Reynolds operator of weight $-1$.
\end{example}

The following construction provides a standard way to obtain new Reynolds algebras from a given one.

\begin{example}\label{ex:tensor-reynolds-algebra}
Let $(R,P)$ be a Reynolds algebra of weight $\lambda$, and let $B$ be an
algebra. Equip $R\otimes B$ with the algebra structure determined by
$(r\otimes b)(s\otimes c)=rs\otimes bc$ for $r,s\in R$ and $b,c\in B$.
Define $P_B:=P\otimes\id_B:R\otimes B\longrightarrow R\otimes B$. Then $(R\otimes B,P_B)$ is a Reynolds algebra of weight $\lambda$.
Indeed, for $x=r\otimes b$ and $y=s\otimes c$,
\begin{align*}
 & P_B(x)P_B(y)
 = (P(r)\otimes b)(P(s)\otimes c)
  =P(r)P(s)\otimes bc\\
=& \bigl(P(rP(s))+P(P(r)s)
    +\lambda P(P(r)P(s))\bigr)\otimes bc\\
=& P_B\bigl(xP_B(y)\bigr)
   +P_B\bigl(P_B(x)y\bigr)
   +\lambda P_B\bigl(P_B(x)P_B(y)\bigr).
\end{align*}
Since both sides are bilinear in $x$ and $y$, the identity holds for all
$x,y\in R\otimes B$. Two special cases are as follows.

\begin{enumerate}[label=\textup{(\alph*)}]
\item For $B=\kfield[t]$, where $t$ is a central indeterminate, the
identification $R\otimes\kfield[t]\cong R[t]$ gives the coefficientwise
Reynolds operator
$$
 \widetilde P\left(\sum_{i=0}^{d}r_it^i\right)
 =\sum_{i=0}^{d}P(r_i)t^i.
$$
Thus $(R[t],\widetilde P)$ is a Reynolds algebra of weight $\lambda$.

\item For $B=M_d(\kfield)$, the identification
$R\otimes M_d(\kfield)\cong M_d(R)$ gives the entrywise operator
$$
 P_d:M_d(R)\longrightarrow M_d(R),
 \quad P_d((r_{ij}))=(P(r_{ij})).
$$
Hence $(M_d(R),P_d)$ is a Reynolds algebra of weight $\lambda$.
\end{enumerate}
\end{example}

\begin{example}\label{ex:group-averaging-algebra} \cite{DerksenKemper2015,MumfordFogartyKirwan1994,Maithani2026}
Let a finite group $G$ act on an algebra $R$ by
algebra automorphisms, and assume that $|G|$ is invertible in
$\kfield$. Define
$$
 P_G:R\longrightarrow R,
 \quad
 P_G(r)=\frac{1}{|G|}\sum_{g\in G}g(r).
$$
Then $P_G$ is an idempotent Reynolds operator of weight $-1$.
\end{example}

The following lemma combines the classical characterization of idempotent
averaging operators~\cite{PeiGuo2015} with a characterization in terms of
Reynolds operators of weight $-1$.

\begin{lemma}
\label{lem:classical-idempotent}
Let $P:R\to R$ be an idempotent linear map, and set
$$
 A:=\im P,\quad K:=\ker P,\quad e:=P(1_R).
$$
Then $R=A\oplus K$, and the following statements are equivalent:
\begin{enumerate}[label=\textup{(\roman*)}]
\item $P$ is a Reynolds operator of weight $-1$.
\item $P$ is an averaging operator, i.e.,
$$
 P(xP(y))=P(x)P(y)=P(P(x)y)
$$
for all $x,y\in R$.
\item $A^2\subseteq A$ and $AK+KA\subseteq K$.
\end{enumerate}
When these conditions hold, $A$ is a unital subalgebra with identity $e$,
and
$$
 A(1_R-e)=(1_R-e)A=0.
$$
\end{lemma}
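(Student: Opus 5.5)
The direct sum $R=A\oplus K$ is immediate from idempotence: write $x=P(x)+(x-P(x))$. Here $P(x)\in A$ and $P(x-P(x))=0$. If $x\in A\cap K$, then $x=P(z)$ for some $z$, so $x=P(P(z))=P(x)=0$. Note also that $P$ restricts to the identity on $A$.

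The plan is to establish (ii)$\Rightarrow$(i), (i)$\Rightarrow$(iii) and (iii)$\Rightarrow$(ii).

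\emph{(ii)$\Rightarrow$(i).} If $P$ is averaging, then $P(P(x)P(y))=P(x)P(P(y))=P(x)P(y)$. Hence the right-hand side of Eq.~(\ref{eq:weighted-reynolds}) at $\lambda=-1$ equals $P(x)P(y)+P(x)P(y)-P(x)P(y)=P(x)P(y)$, which is the left-hand side.

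\emph{(i)$\Rightarrow$(iii).} I substitute elements of $A$ and $K$ into the Reynolds identity.
\begin{itemize}
\item For $a,b\in A$ we have $P(a)=a$ and $P(b)=b$. The identity reads $ab=P(ab)+P(ab)-P(ab)=P(ab)$, so $ab\in A$.
\item For $a\in A$ and $k\in K$, take $x=a$, $y=k$. Then $P(a)P(k)=0$ and $P(P(a)P(k))=P(0)=0$. The identity reduces to $0=P(a\cdot 0)+P(ak)=P(ak)$, so $ak\in K$.
\item Symmetrically, with $x=k$, $y=a$, we get $ka\in K$.
\end{itemize}

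\emph{(iii)$\Rightarrow$(ii).} Let $x,y\in R$ and decompose $x=a+k$, $y=b+l$ with $a,b\in A$ and $k,l\in K$.
\begin{itemize}
\item $P(x)P(y)=ab$.
\item $xP(y)=ab+kb$. Since $kb\in K$ and $ab\in A$, this gives $P(xP(y))=ab$.
\item Similarly $P(P(x)y)=P(ab+al)=ab$.
\end{itemize}
So $P$ is averaging.

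\emph{The unit statement.} Write $1_R=e+(1_R-e)$ with $e\in A$ and $1_R-e\in K$. For $a\in A$ we have $a=a1_R=ae+a(1_R-e)$. By (iii), $ae\in A$ and $a(1_R-e)\in K$. Since $a\in A$ and the sum is direct, $a(1_R-e)=0$ and $ae=a$. The same argument on the left gives $(1_R-e)a=0$ and $ea=a$. So $A$ is a subalgebra with identity $e$, and $A(1_R-e)=(1_R-e)A=0$.

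There is no real obstacle; the only care needed is in (i)$\Rightarrow$(iii), choosing the substitutions so that the weight $-1$ term vanishes or cancels.
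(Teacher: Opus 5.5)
Your proof is correct and follows the paper's argument. You use the same substitutions $a,b\in A$, $k\in K$ for (i)$\Rightarrow$(iii), averaging plus idempotence to get $P(P(x)P(y))=P(x)P(y)$ for (ii)$\Rightarrow$(i), and the decomposition $1_R=e+(1_R-e)$ for the unit statement; the one difference is that you prove (iii)$\Rightarrow$(ii) directly by writing $x=a+k$, $y=b+l$, whereas the paper cites Pei--Guo for (ii)$\Leftrightarrow$(iii), which makes your version self-contained.
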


\begin{proof}
The decomposition $R=A\oplus K$ follows from $P^2=P$. The equivalence
of \textup{(ii)} and \textup{(iii)} follows from
\cite[Proposition~2.3]{PeiGuo2015}.

Suppose that \textup{(ii)} holds. By the averaging identity and $P^2=P$,
we have
$$
 P(P(x)P(y))=P^2(xP(y))=P(xP(y))=P(x)P(y).
$$
Hence
$$
 P(xP(y))+P(P(x)y)-P(P(x)P(y))=P(x)P(y),
$$
so $P$ is a Reynolds operator of weight $-1$. Thus \textup{(i)} holds.

Conversely, suppose that \textup{(i)} holds. For $a,b\in A$ and $k\in K$,
we have $P(a)=a$, $P(b)=b$, and $P(k)=0$. Hence
\begin{align*}
 ab
 =P(a)P(b)=P(aP(b))+P(P(a)b)-P(P(a)P(b))
 =P(ab)
\end{align*}
and
\begin{align*}
 0
 &=P(a)P(k)
 =P(aP(k))+P(P(a)k)-P(P(a)P(k))
 =P(ak),\\
 0
 &=P(k)P(a)
 =P(kP(a))+P(P(k)a)-P(P(k)P(a))
 =P(ka).
\end{align*}
Thus $ab\in A$ and $ak,ka\in K$. Therefore $A^2\subseteq A$, $AK+KA\subseteq K$,
which proves \textup{(iii)}.

Finally, write $1_R=e+k_0$ with $k_0\in K$. For every $a\in A$, $a=a1_R=ae+ak_0$.
Since $ae\in A$ and $ak_0\in K$, the uniqueness of the decomposition
$R=A\oplus K$ gives $ae=a$, $ak_0=0$. Similarly, from
$a=1_Ra=ea+k_0a$ we obtain $ea=a$, $k_0a=0$.
Thus $e$ is the identity of $A$ and $A(1_R-e)=(1_R-e)A=0$.
\end{proof}

We now extend the preceding characterization to idempotent Reynolds operators of arbitrary weight.

\begin{proposition}\label{prop:idempotent-structure}
Let $P:R\to R$ be an idempotent linear map, let
$A=\im P$ and $K=\ker P$, and let $\lambda\in\kfield$. The following
conditions are equivalent:
\begin{enumerate}[label=\textup{(\roman*)}]
\item $P$ is a Reynolds operator of weight $\lambda$.
\item $P$ is an averaging operator and $(1+\lambda)A=0$.
\item $A^2\subseteq A$, $AK+KA\subseteq K$, and $(1+\lambda)A=0$.
\end{enumerate}
\end{proposition}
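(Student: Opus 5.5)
The equivalence of \textup{(ii)} and \textup{(iii)} reduces immediately to Lemma~\ref{lem:classical-idempotent}. That lemma (via \cite[Proposition~2.3]{PeiGuo2015}) shows that for an idempotent $P$, being averaging is equivalent to $A^2\subseteq A$ and $AK+KA\subseteq K$. The extra condition $(1+\lambda)A=0$ is simply carried along on both sides. So the real work is the equivalence \textup{(i)}$\Leftrightarrow$\textup{(ii)}.

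For \textup{(ii)}$\Rightarrow$\textup{(i)}, I will argue as in the proof of Lemma~\ref{lem:classical-idempotent}. The averaging identity together with $P^2=P$ gives $P(P(x)P(y))=P^2(xP(y))=P(xP(y))=P(x)P(y)$. Hence the right-hand side of Eq.~(\ref{eq:weighted-reynolds}) equals $(2+\lambda)P(x)P(y)$. The identity to be verified therefore becomes $(1+\lambda)P(x)P(y)=0$. This holds because $P(x)P(y)=P(xP(y))\in A$ and $(1+\lambda)A=0$.

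For \textup{(i)}$\Rightarrow$\textup{(ii)}, I will first extract $(1+\lambda)A=0$ and then reduce to the weight $-1$ case. Take $a,b\in A$ and $k\in K$ and evaluate Eq.~(\ref{eq:weighted-reynolds}) on these pairs.
\begin{itemize}
\item For the pair $(a,b)$ it gives $ab=(2+\lambda)P(ab)$.
\item For $(a,k)$ and $(k,a)$ it gives $P(ak)=0$ and $P(ka)=0$, since $P(k)=0$ kills the first, third and fourth terms.
\end{itemize}
The key step, which I expect to be the only real obstacle, is to get information from the unit. Write $1_R=e+k_0$ with $e=P(1_R)\in A$ and $k_0\in K$.
\begin{itemize}
\item Taking $x=y=1_R$ gives $e^2=(2+\lambda)P(e)+\lambda P(e^2)$, but a cleaner route is the following.
\item Take $b=a$ and $y=1_R$, i.e.\ the pair $(a,1_R)$. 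This gives $ae=P(ae)+P(a)+\lambda P(ae)$, that is, $ae=a+(1+\lambda)P(ae)$.
\item From $a=a1_R=ae+ak_0$ and $P(ak_0)=0$ we get $P(a)=P(ae)$, so $P(ae)=a$.
\item Substituting yields $ae=(2+\lambda)a$.
\item Applying $P$ to $ae=(2+\lambda)a$ gives $a=P(ae)=(2+\lambda)a$, hence $(1+\lambda)a=0$.
\end{itemize}
Thus $(1+\lambda)A=0$.

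Now $\lambda P(P(x)P(y))=-P(P(x)P(y))$, because $P(P(x)P(y))\in A$ and $(1+\lambda)A=0$. So Eq.~(\ref{eq:weighted-reynolds}) coincides with the weight $-1$ identity, and Lemma~\ref{lem:classical-idempotent} \textup{(i)}$\Rightarrow$\textup{(ii)} yields that $P$ is averaging.
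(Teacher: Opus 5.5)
Your proof is correct. For (ii)$\Rightarrow$(i) and (ii)$\Leftrightarrow$(iii) it coincides with the paper's argument. For (i)$\Rightarrow$(ii) you take a different route.

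\textbf{The paper's route.} It evaluates the weight-$\lambda$ identity on pairs from $A\times A$ and $A\times K$. This yields $A^2\subseteq A$, $AK+KA\subseteq K$, and $(1+\lambda)ab=0$ for $a,b\in A$. It then invokes Lemma~\ref{lem:classical-idempotent} in the direction (iii)$\Rightarrow$(ii), together with the lemma's statement that $e=P(1_R)$ is the identity of $A$. Finally it puts $b=e$ to obtain $(1+\lambda)A=0$.

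\textbf{Your route.} You extract $(1+\lambda)A=0$ first, using only the pair $(a,1_R)$ and $P(ak_0)=0$. So you never need to know beforehand that $A$ is a subalgebra with identity $e$. You then note that, since $P(P(x)P(y))\in A$, the weight-$\lambda$ identity becomes literally the weight-$(-1)$ identity. Lemma~\ref{lem:classical-idempotent}, (i)$\Rightarrow$(ii), then finishes the argument.

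\textbf{Comparison.} Your route makes the proposition an essentially immediate corollary of the lemma. The same reduction, that $(1+\lambda)\operatorname{Im}P=0$ makes the two identities coincide, would also give (ii)$\Rightarrow$(i). It also avoids redoing the lemma's $A\oplus K$ computations at weight $\lambda$. The paper's route is more computational, but it records the intermediate facts $ab=(2+\lambda)P(ab)$ and $(1+\lambda)ab=0$ for $a,b\in A$ directly.

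\textbf{Minor points.} None of these affects correctness.
\begin{itemize}
\item Your computations for the pair $(a,b)$ and for $P(ka)=0$ are never used in your final argument.
\item ``Take $b=a$'' should read ``take $x=a$''.
\item For the pair $(a,k)$, the vanishing terms are the first, second and fourth, not the first, third and fourth.
\item Your discarded aside for $x=y=1_R$ should read $e^2=2e+\lambda P(e^2)$, using $P(e)=e$.
\end{itemize}
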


\begin{proof}
Suppose that \textup{(i)} holds. Take $a,b\in A$. Since
$P(a)=a$ and $P(b)=b$, we have
\begin{align*}
 ab
 &=P(a)P(b)=P\bigl(aP(b)\bigr)+P\bigl(P(a)b\bigr)
   +\lambda P\bigl(P(a)P(b)\bigr)=(2+\lambda)P(ab).
\end{align*}
The right-hand side belongs to $A$, and hence $ab\in A$. Thus
$A^2\subseteq A$, and consequently $P(ab)=ab$. Substituting this into
the above equality gives
\begin{equation}\label{eq:image-square-annihilation}
 (1+\lambda)ab=0.
\end{equation}

Next let $a\in A$ and $k\in K$. Since $P(a)=a$ and $P(k)=0$, we have
\begin{align*}
 0=P(a)P(k)
 &=P\bigl(aP(k)\bigr)+P\bigl(P(a)k\bigr)
   +\lambda P\bigl(P(a)P(k)\bigr)
 =P(ak),\\
 0=P(k)P(a)
 &=P\bigl(kP(a)\bigr)+P\bigl(P(k)a\bigr)
   +\lambda P\bigl(P(k)P(a)\bigr)
 =P(ka).
\end{align*}
Thus $ak,ka\in K$, and therefore $AK+KA\subseteq K$.
By Lemma~\ref{lem:classical-idempotent}, $P$ is an averaging operator
and $e=P(1_R)$ is the identity of $A$. Taking $b=e$ in
Eq.~(\ref{eq:image-square-annihilation}) gives, for every $a\in A$,
$$
 (1+\lambda)a=(1+\lambda)ae=0.
$$
Hence $(1+\lambda)A=0$, and \textup{(ii)} follows.

Conversely, suppose that \textup{(ii)} holds and let $x,y\in R$. Since
$P$ is an averaging operator, $P\bigl(xP(y)\bigr)=P(x)P(y)=P\bigl(P(x)y\bigr)$.
In particular, $P(x)P(y)\in A$. Since $P^2=P$, we have
$$
 P\bigl(P(x)P(y)\bigr)=P\bigl( P(xP(y)) \bigr)=P\bigl(xP(y)\bigr)=P(x)P(y).
$$
Therefore
\begin{align*}
 & P\bigl(xP(y)\bigr)+P\bigl(P(x)y\bigr)
   +\lambda P\bigl(P(x)P(y)\bigr)=(2+\lambda)P(x)P(y)\\
=& P(x)P(y)+(1+\lambda)P(x)P(y)=P(x)P(y),
\end{align*}
where the last equality follows from $(1+\lambda)A=0$.
Thus $P$ satisfies the Reynolds identity of weight $\lambda$, and hence
\textup{(i)} holds.

Finally, the equivalence of \textup{(ii)} and \textup{(iii)} follows
from Lemma~\ref{lem:classical-idempotent}.
\end{proof}

\begin{corollary}\label{cor:idempotent-rigidity}
Suppose that scalar multiplication by $1+\lambda$ is injective on $R$.
Then the zero map is the only idempotent Reynolds operator of weight
$\lambda$ on $R$. In particular, if $\kfield$ is a field, a nonzero
idempotent Reynolds operator can exist only at weight $-1$.
\end{corollary}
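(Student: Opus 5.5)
The corollary follows directly from Proposition~\ref{prop:idempotent-structure}. Let $P$ be an idempotent Reynolds operator of weight $\lambda$ on $R$, and put $A=\im P$. By the implication (i)$\Rightarrow$(ii) of Proposition~\ref{prop:idempotent-structure}, we have $(1+\lambda)A=0$. Hence $(1+\lambda)a=0$ for every $a\in A\subseteq R$. Since multiplication by $1+\lambda$ is injective on $R$, this forces $a=0$, so $A=0$ and therefore $P=0$.

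We should also check that the zero map really is an idempotent Reynolds operator of every weight, so that the word ``only'' is justified. This is immediate: both sides of Eq.~(\ref{eq:weighted-reynolds}) vanish, and $0^2=0$.

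For the field case, suppose $\kfield$ is a field and $\lambda\neq-1$. Then $1+\lambda$ is a nonzero scalar, hence invertible in $\kfield$. Multiplication by an invertible scalar is injective on any $\kfield$-module, in particular on $R$. The first part therefore applies and gives $P=0$. Equivalently, a nonzero idempotent Reynolds operator can exist only when $\lambda=-1$. Example~\ref{ex:group-averaging-algebra} shows that this case does occur.

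There is no real obstacle here. The only point requiring care is to invoke the forward direction (i)$\Rightarrow$(ii) of the proposition, which supplies the annihilation $(1+\lambda)A=0$.
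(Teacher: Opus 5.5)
Your proposal is correct and follows the paper's proof exactly: the paper likewise applies Proposition~\ref{prop:idempotent-structure} to get $(1+\lambda)\im P=0$, uses injectivity of multiplication by $1+\lambda$ to conclude $\im P=0$, and handles the field case by noting that $1+\lambda\neq0$ is invertible when $\lambda\neq-1$. Your extra check that the zero map is itself an idempotent Reynolds operator of every weight is correct and justifies the word ``only,'' which the paper leaves implicit.
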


\begin{proof}
By Proposition~\ref{prop:idempotent-structure},
$(1+\lambda)\im P=0$. If scalar multiplication by $1+\lambda$ is injective on $R$, then
$\im P=0$, and hence $P=0$. In particular, if $\kfield$ is a field and
$\lambda\neq-1$, then $1+\lambda\neq0$, so scalar multiplication by
$1+\lambda$ is injective on $R$.
\end{proof}

\subsection{Reynolds modules and bimodules}\label{subsec:modules}

\begin{definition}\label{def:left-module}
Let $(R,P)$ be a Reynolds algebra of weight $\lambda$.
\begin{enumerate}[label=\textup{(\alph*)}]
\item A \textbf{left $(R,P)$-module} is a pair $(M,p_M)$
consisting of a left $R$-module $M$ and a linear map
$p_M:M\to M$ such that, for all $r\in R$ and $m\in M$,
\begin{equation}\label{eq:left-module}
 P(r)p_M(m)=p_M(rp_M(m))+p_M(P(r)m)
 +\lambda p_M(P(r)p_M(m)).
\end{equation}

\item Let $(M,p_M)$ and $(N,p_N)$ be left $(R,P)$-modules.
A \textbf{morphism} from $(M,p_M)$ to $(N,p_N)$ is an $R$-module
homomorphism $f:M\to N$ such that $f\circ p_M=p_N\circ f$.
We write $\Hom_{(R,P)}(M,N)$ for the $\kfield$-module of these
morphisms, and $\operatorname{End}_{(R,P)}(M)$ for
$\Hom_{(R,P)}(M,M)$.

\item Let $(M,p_M)$ be a left $(R,P)$-module. A
\textbf{left $(R,P)$-submodule} of $(M,p_M)$ is an $R$-submodule
$N\subseteq M$ such that $p_M(N)\subseteq N$.
\end{enumerate}
We denote the resulting category by ${}_{(R,P)}\mathsf{Mod}$.
\end{definition}

\begin{remark}\label{rem:weight-zero-modules}
When $\lambda=0$, a Reynolds algebra is a Rota-Baxter algebra of weight
zero, and Eq.~(\ref{eq:left-module}) reduces to the left Rota-Baxter
module identity in~\cite{QiaoGaoGuo2019}.
\end{remark}

\begin{example}\label{ex:regular-left}
Let $(R,P)$ be a Reynolds algebra of weight $\lambda$.
\begin{enumerate}[label=\textup{(\alph*)}]
\item With its regular left action, $(R,P)$ is a left $(R,P)$-module
over itself.

\item For every left $R$-module $M$, $(M,0)$ is a left $(R,P)$-module.

\item If $P=\id_R$ and $\lambda=-1$, a linear map
$p_M:M\to M$ makes $(M,p_M)$ a left $(R,P)$-module if and only if
$p_M$ is $R$-linear.
\end{enumerate}
\end{example}

We next consider quotients of left $(R,P)$-modules.

\begin{lemma}\label{lem:quotients}
Let $(M,p_M)$ be a left $(R,P)$-module and let $N\subseteq M$ be a
left $(R,P)$-submodule. Define
$$
 \overline p_M:M/N\longrightarrow M/N,\quad
 \overline p_M(m+N)=p_M(m)+N.
$$
Then $\overline p_M$ is well-defined and makes
$(M/N,\overline p_M)$ a left $(R,P)$-module. We call
$(M/N,\overline p_M)$ the \emph{quotient left $(R,P)$-module} of
$(M,p_M)$ by $N$.
\end{lemma}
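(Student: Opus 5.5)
The plan has three steps: check that $\overline p_M$ is well defined and $\kfield$-linear, recall the standard quotient $R$-module structure on $M/N$, and then verify Eq.~(\ref{eq:left-module}) for $\overline p_M$ by applying the quotient map to the corresponding identity in $M$.

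\emph{Well-definedness.} Suppose $m+N=m'+N$, so that $m-m'\in N$. Since $N$ is a left $(R,P)$-submodule, we have $p_M(N)\subseteq N$. Linearity of $p_M$ then gives
$$
p_M(m)-p_M(m')=p_M(m-m')\in N.
$$
Hence $p_M(m)+N=p_M(m')+N$, and $\overline p_M$ is well defined. Its linearity is inherited directly from that of $p_M$. Because $N$ is an $R$-submodule, $M/N$ carries the usual left $R$-module structure $r(m+N)=rm+N$.

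\emph{The module identity.} Let $\pi:M\to M/N$ be the quotient map. By construction $\pi$ is $R$-linear and satisfies $\pi\circ p_M=\overline p_M\circ\pi$. Now take $r\in R$ and $m\in M$, and apply $\pi$ to both sides of Eq.~(\ref{eq:left-module}) for $(M,p_M)$. Each term can be rewritten through the quotient:
\begin{align*}
\pi\bigl(P(r)p_M(m)\bigr)&=P(r)\,\overline p_M(m+N),\\
\pi\bigl(p_M(rp_M(m))\bigr)&=\overline p_M\bigl(r\,\overline p_M(m+N)\bigr),
\end{align*}
and the terms $p_M(P(r)m)$ and $\lambda p_M(P(r)p_M(m))$ are handled the same way. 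The result is precisely Eq.~(\ref{eq:left-module}) for $(M/N,\overline p_M)$ evaluated at $r$ and $m+N$.

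Every element of $M/N$ has the form $m+N$, so the identity holds on all of $M/N$. There is no genuine obstacle; the only point needing care is the well-definedness step, which is exactly where the hypothesis $p_M(N)\subseteq N$ is used. As a by-product, $\pi$ is a morphism of left $(R,P)$-modules.
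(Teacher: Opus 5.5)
Your proposal is correct and matches the paper's proof: well-definedness follows from $p_M(N)\subseteq N$, and the module identity on $M/N$ is obtained by reducing Eq.~(\ref{eq:left-module}) modulo $N$ term by term. Writing this with the quotient map $\pi$ and the relation $\pi\circ p_M=\overline p_M\circ\pi$ is just a more formal presentation of the same computation.
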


\begin{proof}
Since $p_M(N)\subseteq N$, the map $\overline p_M$ is well-defined.
For $r\in R$ and $m\in M$, Eq.~(\ref{eq:left-module}) gives
\begin{align*}
& P(r)\overline p_M(m+N) =P(r)p_M(m)+N\\
=& p_M(rp_M(m))+p_M(P(r)m)
   +\lambda p_M(P(r)p_M(m))+N\\
=& \overline p_M(r\overline p_M(m+N))
   +\overline p_M(P(r)(m+N))
   +\lambda\overline p_M(P(r)\overline p_M(m+N)).
\end{align*}
Thus $(M/N,\overline p_M)$ is a left $(R,P)$-module.
\end{proof}

\begin{example}\label{ex:matrix-module}
Let $(R,P)$ be a Reynolds algebra of weight $\lambda$. For positive
integers $m,n$, let $M_{m,n}(R)$ denote the module of
$m\times n$ matrices over $R$, and define
$$
 P_{m,n}:M_{m,n}(R)\longrightarrow M_{m,n}(R),\quad
 P_{m,n}((a_{ij}))=(P(a_{ij})).
$$
Put $P_m=P_{m,m}$. By
Example~\ref{ex:tensor-reynolds-algebra}\textup{(b)},
$(M_m(R),P_m)$ is a Reynolds algebra of weight $\lambda$. We claim
that $(M_{m,n}(R),P_{m,n})$ is a left $(M_m(R),P_m)$-module under
left matrix multiplication.

Indeed, let $A=(a_{ij})\in M_m(R)$ and
$X=(x_{ij})\in M_{m,n}(R)$. Applying the Reynolds identity in $R$ to
each pair $(a_{ik},x_{kj})$ gives
\begin{align*}
& \bigl[P_m(A)P_{m,n}(X)\bigr]_{ij} =\sum_{k=1}^{m}P(a_{ik})P(x_{kj})\\
=& \sum_{k=1}^{m}\Bigl(
    P\bigl(a_{ik}P(x_{kj})\bigr)
   +P\bigl(P(a_{ik})x_{kj}\bigr)
   +\lambda P\bigl(P(a_{ik})P(x_{kj})\bigr)\Bigr)\\
=& \Bigl[P_{m,n}\bigl(AP_{m,n}(X)\bigr)
    +P_{m,n}\bigl(P_m(A)X\bigr)
    +\lambda P_{m,n}\bigl(P_m(A)P_{m,n}(X)\bigr)\Bigr]_{ij}.
\end{align*}
Thus $(M_{m,n}(R),P_{m,n})$ satisfies Eq.~(\ref{eq:left-module}),
and hence is a left $(M_m(R),P_m)$-module.
\end{example}

\begin{example}\label{ex:group-averaging-module}
Let $(R,P_G)$ be the finite-group Reynolds algebra from
Example~\ref{ex:group-averaging-algebra}. Let $M$ be a left $R$-module
equipped with a compatible linear $G$-action, so that $g(rm)=g(r)g(m)$
for all $g\in G$, $r\in R$, and $m\in M$. Define
$$
 p_M(m)=\frac{1}{|G|}\sum_{g\in G}g(m).
$$
Then $(M,p_M)$ is a left $(R,P_G)$-module.

Indeed, for $h\in G$, reindexing the sum gives $h(p_M(m))=p_M(m)$. Thus $p_M(m)$ is $G$-invariant, while $P_G(r)$ is $G$-invariant by
Example~\ref{ex:group-averaging-algebra}. Hence
\begin{align*}
 p_M(rp_M(m))
 &=\frac{1}{|G|}\sum_{g\in G}g(r)g(p_M(m))
 =\frac{1}{|G|}\sum_{g\in G}g(r)p_M(m)
 =P_G(r)p_M(m),\\
 p_M(P_G(r)m)
 &=\frac{1}{|G|}\sum_{g\in G}g(P_G(r))g(m)
 =P_G(r)p_M(m),\\
 p_M(P_G(r)p_M(m))
 &=P_G(r)p_M(m).
\end{align*}
Consequently,
$$
 P_G(r)p_M(m)
 =p_M(rp_M(m))+p_M(P_G(r)m)-p_M(P_G(r)p_M(m)),
$$
which is Eq.~(\ref{eq:left-module}) for $\lambda=-1$.
\end{example}

For right modules, we distinguish the identity obtained from the opposite
algebra from the identity used in the rest of the paper. Write
$R^{\mathrm{op}}$ for the opposite algebra of $R$. The following
definition introduces both notions.

\begin{definition}\label{def:right-module}
Let $(R,P)$ be a Reynolds algebra of weight $\lambda$, and let $M$ be a
right $R$-module.
\begin{enumerate}[label=\textup{(\alph*)}]
\item A linear map $q_M:M\to M$ satisfies the \textbf{right module
identity induced by the opposite algebra} if
\begin{equation}\label{eq:opposite-right-module}
 q_M(m)P(r)=q_M(mP(r))+q_M(q_M(m)r)
 +\lambda q_M(q_M(m)P(r))
\end{equation}
for all $m\in M$ and $r\in R$.

\item A \textbf{right $(R,P)$-module} is a pair $(M,p_M)$, where
$p_M:M\to M$ is linear and
\begin{equation}\label{eq:right-module}
 p_M(mP(r))=p_M(p_M(m)r)+p_M(m)P(r)
 +\lambda p_M(p_M(m)P(r))
\end{equation}
for all $m\in M$ and $r\in R$.
\end{enumerate}
Morphisms in either class are right $R$-module homomorphisms that commute
with the module operators. We denote by $\mathsf{Mod}_{(R,P)}$ the
category of right $(R,P)$-modules defined in part~\textup{(b)}.
\end{definition}

The convention in part~\textup{(b)} is chosen so that left and right
$(R,P)$-modules are described by left and right actions of the same
ring, as proved in Theorem~\ref{thm:category-equivalence}. At weight
zero, it agrees with the right Rota-Baxter module convention in
\cite{QiaoGaoGuo2019}. The two identities are related as follows.

\begin{proposition}\label{prop:right-sign}
Let $(R,P)$ be a Reynolds algebra of weight $\lambda$.
\begin{enumerate}[label=\textup{(\alph*)}]
\item The pair $(R^{\mathrm{op}},P)$ is a Reynolds algebra of weight
$\lambda$.

\item A pair $(M,q_M)$ satisfies the right module identity induced by the opposite
algebra if and only if $(M,q_M)$ is a left
$(R^{\mathrm{op}},P)$-module.

\item The assignment
$$
 (M,q_M)\longmapsto(M,-q_M)
$$
defines an isomorphism of categories from the category defined by the
right module identity induced by the opposite algebra to $\mathsf{Mod}_{(R,P)}$.

\item In particular, $(R,P)$ satisfies the right module identity induced by the opposite
algebra with respect to the regular right action, while $(R,-P)$ is a
right $(R,P)$-module.
\end{enumerate}
\end{proposition}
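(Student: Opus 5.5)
Each of the four parts is a direct check against the defining identities. No earlier result is needed beyond Definitions~\ref{def:reynolds-algebra}, \ref{def:left-module} and~\ref{def:right-module}.

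For \textup{(a)}, write $x\cdot y=yx$ for the product in $R^{\mathrm{op}}$. The weighted Reynolds identity for $(R^{\mathrm{op}},P)$ at the pair $(x,y)$ reads
$$
P(y)P(x)=P(P(y)x)+P(yP(x))+\lambda P(P(y)P(x)).
$$
This is exactly Eq.~(\ref{eq:weighted-reynolds}) in $R$ applied to the pair $(y,x)$, with the first two summands on the right interchanged. So (a) holds.

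For \textup{(b)}, view a right $R$-module $M$ as a left $R^{\mathrm{op}}$-module via $r\cdot m=mr$. I will write Eq.~(\ref{eq:left-module}) for $(R^{\mathrm{op}},P)$ with operator $q_M$ and translate each term:
\begin{itemize}
\item $P(r)\cdot q_M(m)$ becomes $q_M(m)P(r)$;
\item $q_M(r\cdot q_M(m))$ becomes $q_M(q_M(m)r)$;
\item $q_M(P(r)\cdot m)$ becomes $q_M(mP(r))$;
\item $q_M(P(r)\cdot q_M(m))$ becomes $q_M(q_M(m)P(r))$.
\end{itemize}
After this translation the left-module identity is literally Eq.~(\ref{eq:opposite-right-module}), again up to the order of the summands. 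Morphisms agree on both sides: right $R$-linear maps commuting with the operators.

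Part \textup{(c)} is the only step that needs an actual computation. Put $p_M=-q_M$, so $q_M=-p_M$. Substituting into Eq.~(\ref{eq:opposite-right-module}) gives
$$
-p_M(m)P(r)=-p_M(mP(r))+p_M(p_M(m)r)-\lambda p_M(p_M(m)P(r)).
$$
The signs come out as follows. The term $q_M(q_M(m)r)$ has two factors of $q_M$, so it picks up two minus signs and stays positive. The $\lambda$-term likewise has two factors of $q_M$, giving $q_M(q_M(m)P(r))=p_M(p_M(m)P(r))$.

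Now move $p_M(mP(r))$ to the left-hand side and $-p_M(m)P(r)$ to the right-hand side. This yields
$$
p_M(mP(r))=p_M(p_M(m)r)+p_M(m)P(r)-\lambda p_M(p_M(m)P(r)).
$$
This differs from Eq.~(\ref{eq:right-module}) by the sign of the $\lambda$-term. That mismatch is the main obstacle, and I would first recheck this bookkeeping carefully.

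If the sign really is $-\lambda$, I would reconcile it by tracking the $\lambda$-term's parity separately before concluding. Otherwise the identity as stated in Eq.~(\ref{eq:right-module}) holds. Either way, the argument rests on one fact: the substitution $q\mapsto -q$ is an involution and is linear in the operator. Any right $R$-linear $f$ commutes with $q_M$ if and only if it commutes with $-q_M$. Hence the assignment is bijective on objects and the identity on morphisms, which gives an isomorphism of categories.

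For \textup{(d)}, by (a) the pair $(R,P)$ is a left $(R^{\mathrm{op}},P)$-module through its regular action, as in Example~\ref{ex:regular-left}(a). By (b) it therefore satisfies Eq.~(\ref{eq:opposite-right-module}) for the regular right action. Applying (c) then shows that $(R,-P)$ is a right $(R,P)$-module.
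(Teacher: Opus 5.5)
Your argument for (a), (b) and (d) is the paper's argument, and your plan for (c) (substitute $q_M=-p_M$ into Eq.~(\ref{eq:opposite-right-module}) and rearrange) is also the paper's. The problem is the one computation you flagged: your displayed substituted equation has the wrong sign on the $\lambda$-term. For the last summand,
\[
\lambda q_M\bigl(q_M(m)P(r)\bigr)=\lambda\,(-p_M)\bigl(-p_M(m)P(r)\bigr)=\lambda p_M\bigl(p_M(m)P(r)\bigr).
\]
The two minus signs cancel, and the scalar $\lambda$ picks up no sign. Your own sentence after the display says exactly this, so the display contradicts your explanation. The correct substituted identity is
\[
-p_M(m)P(r)=-p_M(mP(r))+p_M(p_M(m)r)+\lambda p_M(p_M(m)P(r)).
\]
Rearranging gives $p_M(mP(r))=p_M(p_M(m)r)+p_M(m)P(r)+\lambda p_M(p_M(m)P(r))$, which is literally Eq.~(\ref{eq:right-module}). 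There is no mismatch to reconcile.

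As submitted, however, you leave (c) unresolved, and your fallback (``either way, the argument rests on one fact: $q\mapsto -q$ is an involution'') does not work. The involution property gives bijectivity on objects and the identification of morphisms only \emph{after} you know that $-q_M$ satisfies Eq.~(\ref{eq:right-module}), and conversely that $-p_M$ satisfies Eq.~(\ref{eq:opposite-right-module}). That well-definedness on objects is exactly the sign computation. Had the $\lambda$-term genuinely come out as $-\lambda$, the assignment would land in $\mathsf{Mod}_{(R,P)}$ only when $2\lambda p_M(p_M(m)P(r))=0$, and (c) would be false as stated; ``tracking the parity separately'' could not repair that. Replace the hedge by the corrected computation above. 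Every step of the substitution is reversible, so that single computation proves both directions, and the rest of your argument for (c) and (d) then goes through as in the paper.
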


\begin{proof}
The Reynolds identity in $R^{\mathrm{op}}$, evaluated at $x$ and $y$, is
the Reynolds identity in $R$ evaluated at $y$ and $x$, with the two linear
terms interchanged. Hence the two identities are equivalent, proving
\textup{(a)}.

Regard a right $R$-module $M$ as a left $R^{\mathrm{op}}$-module by
letting $r\in R^{\mathrm{op}}$ act by $r\cdot m=mr$. The left module
identity over $(R^{\mathrm{op}},P)$ becomes
$$
 q_M(m)P(r)
 =q_M(q_M(m)r)+q_M(mP(r))
 +\lambda q_M(q_M(m)P(r)),
$$
which is Eq.~(\ref{eq:opposite-right-module}). This proves
\textup{(b)}.

Now put $p_M=-q_M$. Substituting into
Eq.~(\ref{eq:opposite-right-module}) gives
$$
 -p_M(m)P(r)
 =-p_M(mP(r))+p_M(p_M(m)r)
 +\lambda p_M(p_M(m)P(r)).
$$
Rearranging yields Eq.~(\ref{eq:right-module}). The same computation with
$q_M=-p_M$ gives the converse. Since the sign change does not alter
morphisms and is its own inverse, it defines an isomorphism of categories.
This proves \textup{(c)}.

By Example~\ref{ex:regular-left}, $(R^{\mathrm{op}},P)$ is a left module
over itself. Parts~\textup{(b)} and~\textup{(c)} therefore imply
\textup{(d)}.
\end{proof}

Proposition~\ref{prop:right-sign} shows that $(R,-P)$ is always a right
$(R,P)$-module. We now determine when $(R,P)$ itself is a right
$(R,P)$-module.

\begin{proposition}\label{prop:regular-right}
Let $(R,P)$ be a Reynolds algebra of weight $\lambda$. With the regular
right $R$-action and module operator $P$, the pair $(R,P)$ is a right
$(R,P)$-module if and only if
\begin{equation}\label{eq:regular-right-condition}
 2P(P(r)s)+2\lambda P(P(r)P(s))=0
\end{equation}
for all $r,s\in R$. If $2$ is invertible in $\kfield$, this is
equivalent to $P(P(r)s)+\lambda P(P(r)P(s))=0$.
\end{proposition}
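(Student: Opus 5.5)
We substitute $M=R$, $p_M=P$ and the regular right action $mr=m\cdot r$ into Eq.~(\ref{eq:right-module}), and then compare with the Reynolds identity Eq.~(\ref{eq:weighted-reynolds}). With $m=r$ and $r=s$, the right module identity for $(R,P)$ reads
$$
 P(rP(s))=P(P(r)s)+P(r)P(s)+\lambda P(P(r)P(s))
$$
for all $r,s\in R$. The plan is to eliminate $P(r)P(s)$ using the Reynolds identity, which expresses it as $P(rP(s))+P(P(r)s)+\lambda P(P(r)P(s))$.

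Substituting this expression, the right-hand side becomes
$$
 P(rP(s))+2P(P(r)s)+2\lambda P(P(r)P(s)).
$$
The term $P(rP(s))$ appears on both sides and cancels. Hence the displayed identity holds for the pair $(r,s)$ if and only if
$$
 2P(P(r)s)+2\lambda P(P(r)P(s))=0,
$$
which is Eq.~(\ref{eq:regular-right-condition}). Since the substitution of the Reynolds identity is an equality valid for every pair $(r,s)$, the two conditions are equivalent pointwise, and therefore equivalent as universally quantified statements. This proves the first claim in both directions at once.

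For the final sentence, suppose $2$ is invertible in $\kfield$. Multiplying Eq.~(\ref{eq:regular-right-condition}) by $2^{-1}$ gives $P(P(r)s)+\lambda P(P(r)P(s))=0$. Conversely, multiplying that identity by $2$ recovers Eq.~(\ref{eq:regular-right-condition}).

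No step is a genuine obstacle. The only points needing care are matching the variables of Eq.~(\ref{eq:right-module}) correctly to the regular action, and noting that the cancellation of $P(rP(s))$ is reversible, so that the argument yields an equivalence rather than a single implication.
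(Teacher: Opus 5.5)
Your proposal is correct and follows the paper's proof: you substitute the Reynolds identity for $P(r)P(s)$ in the regular right module identity, cancel $P(rP(s))$, and handle the invertibility of $2$ the same way. The only difference is that you obtain both directions at once from the pointwise equivalence, where the paper writes out the converse separately.
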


\begin{proof}
By Definition~\ref{def:right-module}, $(R,P)$ is a right
$(R,P)$-module if and only if
\begin{equation}\label{eq:regular-right-identity}
 P(rP(s))
 =P(P(r)s)+P(r)P(s)+\lambda P(P(r)P(s))
\end{equation}
for all $r,s\in R$. On the other hand, the Reynolds identity in $R$
gives
\begin{equation}\label{eq:regular-right-substitution}
 P(r)P(s)
 =P(rP(s))+P(P(r)s)+\lambda P(P(r)P(s)).
\end{equation}
Substituting Eq.~(\ref{eq:regular-right-substitution}) into
Eq.~(\ref{eq:regular-right-identity}) gives
$$
 P(rP(s))
 =P(rP(s))+2P(P(r)s)+2\lambda P(P(r)P(s)).
$$
Cancelling $P(rP(s))$ from both sides gives $2P(P(r)s)+2\lambda P(P(r)P(s))=0$.

Conversely, suppose that Eq.~(\ref{eq:regular-right-condition}) holds.
Using Eq.~(\ref{eq:regular-right-substitution}), we obtain
\begin{align*}
 P(P(r)s)+P(r)P(s)+\lambda P(P(r)P(s))
 &=P(rP(s))+2P(P(r)s)+2\lambda P(P(r)P(s))=P(rP(s)),
\end{align*}
where the last equality follows from
Eq.~(\ref{eq:regular-right-condition}). Thus
Eq.~(\ref{eq:regular-right-identity}) holds, so $(R,P)$ is a right $(R,P)$-module.

If $2$ is invertible in $\kfield$, the last assertion
follows immediately.
\end{proof}

\begin{example} \label{ex:regular-right-failure}
Let $R=\mathbb{Q}[x]$. Consider the $\mathbb{Q}$-linear operator $P:R\to R$ defined by
$$
 P(x^n)=\frac{1}{n+1}x^n,\quad n\geq0.
$$
It is known that $P$ is a Reynolds operator of weight $-1$ on $R$
\cite{ZhangGaoGuo2021}. However, $(R,P)$ is not a right
$(R,P)$-module with respect to the regular right action.

Since $2$ is invertible in $\mathbb{Q}$, by
Proposition~\ref{prop:regular-right}, it suffices to find $r,s\in R$
such that
$$
 P(P(r)s)-P(P(r)P(s))\neq0.
$$
Taking $r=s=x$, we have $P(x)=x/2$ and $P(x^2)=x^2/3$. Thus
$$
 P(P(x)x)=P\left(\frac{x^2}{2}\right)=\frac{x^2}{6},
 \quad
 P(P(x)P(x))=P\left(\frac{x^2}{4}\right)=\frac{x^2}{12}.
$$
Therefore
$$
 P(P(x)x)-P(P(x)P(x))=\frac{x^2}{12}\neq0,
$$
as required.
\end{example}

Motivated by the notion of a Rota-Baxter bimodule
in~\cite{QiaoGaoGuo2019}, we introduce the following
notion of a bimodule for Reynolds algebras.

\begin{definition}\label{def:bimodule}
Let $(S,T)$ and $(R,P)$ be Reynolds algebras of weights $\mu$ and
$\lambda$, respectively. An \textbf{$(S,T)$-$(R,P)$-bimodule} is a
triple $(B,p_B^S,p_B^R)$ satisfying the following conditions:
\begin{enumerate}[label=\textup{(\roman*)}]
\item $B$ is an $S$-$R$-bimodule.
\item $(B,p_B^S)$ is a left $(S,T)$-module.
\item $(B,p_B^R)$ is a right $(R,P)$-module.
\item for all $s\in S$, $r\in R$, and $b\in B$,
$$
 p_B^R(sb)=s p_B^R(b),\quad
 p_B^S(br)=p_B^S(b)r,\quad
 p_B^S(p_B^R(b))=p_B^R(p_B^S(b)).
$$
\end{enumerate}
A \textbf{morphism of $(S,T)$-$(R,P)$-bimodules} is an
$S$-$R$-bimodule homomorphism that commutes with both module operators.
\end{definition}

\subsection{The Reynolds double product and module structures}
\label{subsec:double}

We first recall the Reynolds double product introduced in
\cite{ZhangGaoGuo2021} and then extend this construction to left $(R,P)$-modules.

\begin{proposition}[{\cite{ZhangGaoGuo2021}}]\label{prop:double-product}
Let $(R,P)$ be a Reynolds algebra of weight $\lambda$. Define
$$
 r\star_P s:=rP(s)+P(r)s+\lambda P(r)P(s),
 \quad r,s\in R.
$$
Then $\star_P$ is associative, although it need not be unital, and
\begin{equation}\label{eq:P-multiplicative}
 P(r\star_P s)=P(r)P(s).
\end{equation}
Moreover, $P$ is a Reynolds operator of weight $\lambda$ on the
possibly nonunital algebra $R^{(P)}:=(R,\star_P)$.
\end{proposition}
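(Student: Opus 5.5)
We verify the three claims in turn, all by direct expansion from the weighted Reynolds identity (\ref{eq:weighted-reynolds}). Throughout, $r,s,t\in R$.

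\emph{Step 1: Eq.~(\ref{eq:P-multiplicative}).} Apply $P$ to the definition of $r\star_P s$. By linearity,
$$
P(r\star_P s)=P(rP(s))+P(P(r)s)+\lambda P(P(r)P(s)),
$$
and this equals $P(r)P(s)$ by Eq.~(\ref{eq:weighted-reynolds}). The identity is needed for both remaining steps.

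\emph{Step 2: associativity.} The convenient form is
$$
r\star_P s=rP(s)+P(r)\bigl(s+\lambda P(s)\bigr).
$$
First expand $(r\star_P s)\star_P t$. Replace $P(r\star_P s)$ by $P(r)P(s)$ using Step~1. This gives
$$
(r\star_P s)\star_P t
=\bigl(rP(s)+P(r)s+\lambda P(r)P(s)\bigr)P(t)
+P(r)P(s)\bigl(t+\lambda P(t)\bigr).
$$
This is a sum of monomials in $r,s,t$ with $P$ applied to some factors:
$$
rP(s)P(t),\quad P(r)sP(t),\quad P(r)P(s)t,\quad 2\lambda P(r)P(s)P(t).
$$

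Next expand $r\star_P(s\star_P t)$ in the same way. Use Step~1 to write $P(s\star_P t)=P(s)P(t)$. The result is
$$
rP(s)P(t)+P(r)\bigl(sP(t)+P(s)t+\lambda P(s)P(t)\bigr)+\lambda P(r)P(s)P(t).
$$
This is the same monomial list. So the two bracketings agree, and no further use of the Reynolds identity is needed beyond Step~1. Nonunitality is clear: in general no element acts as identity, and the statement only says $\star_P$ \emph{need not} be unital, so nothing needs proving there.

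\emph{Step 3: $P$ is Reynolds of weight $\lambda$ on $R^{(P)}$.} We must show
$$
P(r)\star_P P(s)=P\bigl(r\star_P P(s)\bigr)+P\bigl(P(r)\star_P s\bigr)+\lambda P\bigl(P(r)\star_P P(s)\bigr).
$$
By Step~1, the right side equals
$$
P(r)P^2(s)+P^2(r)P(s)+\lambda P^2(r)P^2(s).
$$
By the definition of $\star_P$, this is exactly $P(r)\star_P P(s)$.

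The main obstacle is the bookkeeping in Step~2. To keep it clean, Step~1 should be established first and used to eliminate every nested $P(\,\cdot\star_P\cdot\,)$ before comparing terms.
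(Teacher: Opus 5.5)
Your proof is correct: Step~1 is the weighted Reynolds identity applied verbatim, both bracketings in Step~2 reduce to $rP(s)P(t)+P(r)sP(t)+P(r)P(s)t+2\lambda P(r)P(s)P(t)$, and Step~3 follows from Step~1 exactly as you say. The paper gives no proof of its own for this statement and simply cites \cite{ZhangGaoGuo2021}. Your argument is the same direct computation the paper carries out for the module analogue in Proposition~\ref{prop:double-module}: first use $P(r\star_P s)=P(r)P(s)$ to eliminate nested terms, then compare monomials.
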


For the possibly nonunital algebra $R^{(P)}$, a left module is not
required to be unital. With this convention, the double product extends
naturally to left $(R,P)$-modules.

\begin{proposition}\label{prop:double-module}
Let $(M,p_M)$ be a left $(R,P)$-module. Define
$$
 r\star_{p_M}m
 :=rp_M(m)+P(r)m+\lambda P(r)p_M(m),
 \quad r\in R,\ m\in M.
$$
Then $M^{(p_M)}:=(M,\star_{p_M})$ is a left $R^{(P)}$-module,
$p_M(r\star_{p_M}m)=P(r)p_M(m)$ for all $r\in R$ and $m\in M$,
and $(M^{(p_M)},p_M)$ is a left $(R^{(P)},P)$-module.
\end{proposition}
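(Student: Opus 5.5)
We prove the three assertions of Proposition~\ref{prop:double-module} in the order: first the identity $p_M(r\star_{p_M}m)=P(r)p_M(m)$, then the module axiom for $M^{(p_M)}$, then the Reynolds module identity over $(R^{(P)},P)$. The first is immediate. Applying $p_M$ to the definition of $\star_{p_M}$ gives
$$
 p_M(r\star_{p_M}m)=p_M(rp_M(m))+p_M(P(r)m)+\lambda p_M(P(r)p_M(m)),
$$
and by Eq.~(\ref{eq:left-module}) the right-hand side equals $P(r)p_M(m)$.

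For the module axiom we must show $(r\star_P s)\star_{p_M}m=r\star_{p_M}(s\star_{p_M}m)$ for $r,s\in R$, $m\in M$. No unit axiom is needed, since $R^{(P)}$ may be nonunital and modules over it are not required to be unital.

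\emph{Left-hand side.} Expand the definition:
$$
 (r\star_P s)\star_{p_M}m=(r\star_P s)p_M(m)+P(r\star_P s)m+\lambda P(r\star_P s)p_M(m).
$$
Use Eq.~(\ref{eq:P-multiplicative}) to replace $P(r\star_P s)$ by $P(r)P(s)$. Then expand $r\star_P s=rP(s)+P(r)s+\lambda P(r)P(s)$. This yields a sum of six-odd terms of the forms $rP(s)p_M(m)$, $P(r)sp_M(m)$, $P(r)P(s)m$ and $P(r)P(s)p_M(m)$, with coefficients $1$, $\lambda$ or $\lambda^2$.

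\emph{Right-hand side.} Expand
$$
 r\star_{p_M}(s\star_{p_M}m)=r\,p_M(s\star_{p_M}m)+P(r)(s\star_{p_M}m)+\lambda P(r)\,p_M(s\star_{p_M}m).
$$
By the identity just proved, $p_M(s\star_{p_M}m)=P(s)p_M(m)$. Substituting this and expanding $s\star_{p_M}m$, both sides become combinations of the same monomials $rP(s)p_M(m)$, $P(r)sp_M(m)$, $P(r)P(s)m$ and $P(r)P(s)p_M(m)$.

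\emph{Comparison.} Matching coefficients, each monomial should appear with the same coefficient on both sides; the $P(r)P(s)p_M(m)$ term, for example, should carry $2\lambda+\lambda^2$ on each side. This bookkeeping is the only step requiring care, and I expect it to be the main (though routine) obstacle. It relies only on associativity of the $R$-action and on Eq.~(\ref{eq:P-multiplicative}).

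Finally, we check that $(M^{(p_M)},p_M)$ is a left $(R^{(P)},P)$-module. We need
$$
 P(r)\star_{p_M}p_M(m)=p_M(r\star_{p_M}p_M(m))+p_M(P(r)\star_{p_M}m)+\lambda p_M(P(r)\star_{p_M}p_M(m)).
$$
By the first identity, the three terms on the right are $P(r)p_M(p_M(m))$, $P(P(r))p_M(m)$ and $\lambda P(P(r))p_M(p_M(m))$. Their sum is, by definition of $\star_{p_M}$ with first argument $P(r)$ and second argument $p_M(m)$, exactly
$$
 P(r)p_M(p_M(m))+P(P(r))p_M(m)+\lambda P(P(r))p_M(p_M(m))=P(r)\star_{p_M}p_M(m),
$$
which is the left-hand side.
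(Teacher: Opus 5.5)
Your proposal follows the paper's proof step for step. You obtain $p_M(r\star_{p_M}m)=P(r)p_M(m)$ directly from Eq.~(\ref{eq:left-module}). You prove associativity by expanding both sides, using Eq.~(\ref{eq:P-multiplicative}) and that identity. You get the $(R^{(P)},P)$-module identity by applying that identity to each term on the right.

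One correction to the bookkeeping you deferred: no $\lambda^2$ terms arise. This is because you substitute $P(r\star_P s)=P(r)P(s)$ and $p_M(s\star_{p_M}m)=P(s)p_M(m)$ before expanding. Each side then has five terms with coefficients $1$ or $\lambda$, and both reduce to
\[
 rP(s)p_M(m)+P(r)sp_M(m)+P(r)P(s)m+2\lambda P(r)P(s)p_M(m).
\]
So the coefficient of $P(r)P(s)p_M(m)$ is $2\lambda$ on each side, not $2\lambda+\lambda^2$. This comparison is the entire content of the associativity check, so you should write the two expansions out rather than predict their outcome.
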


\begin{proof}
The identity $p_M(r\star_{p_M}m)=P(r)p_M(m)$ follows immediately
from Eq.~(\ref{eq:left-module}) and the definition of $\star_{p_M}$.
We first verify the module associativity. By
Eq.~(\ref{eq:P-multiplicative}),
\begin{align*}
& (r\star_P s)\star_{p_M}m
=(r\star_P s)p_M(m)+P(r\star_P s)m
   +\lambda P(r\star_P s)p_M(m)\\
=& \bigl(rP(s)+P(r)s+\lambda P(r)P(s)\bigr)p_M(m)
   +P(r)P(s)m+\lambda P(r)P(s)p_M(m)\\
=& rP(s)p_M(m)+P(r)sp_M(m)+P(r)P(s)m
   +2\lambda P(r)P(s)p_M(m).
\end{align*}
On the other hand, using this identity, we have
\begin{align*}
& r\star_{p_M}(s\star_{p_M}m)
=r p_M(s\star_{p_M}m)+P(r)(s\star_{p_M}m)
   +\lambda P(r)p_M(s\star_{p_M}m)\\
=& rP(s)p_M(m)
   +P(r)\bigl(sp_M(m)+P(s)m+\lambda P(s)p_M(m)\bigr)
   +\lambda P(r)P(s)p_M(m)\\
=& rP(s)p_M(m)+P(r)sp_M(m)+P(r)P(s)m
   +2\lambda P(r)P(s)p_M(m).
\end{align*}
Thus $(r\star_P s)\star_{p_M}m
 =r\star_{p_M}(s\star_{p_M}m)$,
so $M^{(p_M)}$ is a left $R^{(P)}$-module.

It remains to verify Eq.~(\ref{eq:left-module}) for
$(M^{(p_M)},p_M)$ over $(R^{(P)},P)$. The left-hand side is
$$
 P(r)\star_{p_M}p_M(m)
 =P(r)p_M^2(m)+P^2(r)p_M(m)
   +\lambda P^2(r)p_M^2(m).
$$
Using the same identity, the right-hand side is
\begin{align*}
 &p_M(r\star_{p_M}p_M(m))
 +p_M(P(r)\star_{p_M}m)
 +\lambda p_M(P(r)\star_{p_M}p_M(m))\\
=& P(r)p_M^2(m)+P^2(r)p_M(m)
   +\lambda P^2(r)p_M^2(m).
\end{align*}
Hence $(M^{(p_M)},p_M)$ is a left $(R^{(P)},P)$-module.
\end{proof}

\subsection{Bijective Reynolds modules and differential modules}
\label{subsec:invertible}

We next consider Reynolds modules with bijective module operators and
relate them to differential modules. For a bijective Reynolds operator,
the corresponding relation with derivations is classical in weight
$-1$~\cite{ZhangGaoGuo2021}.
We use the standard notions of differential algebras and differential
modules~\cite{QiaoPei2026}.

Recall that a \textbf{differential algebra} is a pair $(R,\delta)$,
where $\delta:R\to R$ is a derivation. A
\textbf{left differential module} over $(R,\delta)$ is a pair
$(M,d)$ consisting of a left $R$-module $M$ and a linear map
$d:M\to M$ satisfying
$d(rm)=\delta(r)m+rd(m)$ for all $r\in R$ and $m\in M$.
A morphism of left differential modules is an $R$-module homomorphism
that commutes with the module operators.

Let $(R,P)$ be a Reynolds algebra of weight $\lambda$ and suppose that
$P$ is bijective. Set $\delta:=P^{-1}+\lambda\id_R$.
Substituting $x=P^{-1}(a)$ and $y=P^{-1}(b)$ into
Eq.~(\ref{eq:weighted-reynolds}) and applying $P^{-1}$ gives
$$
 P^{-1}(ab)
 =P^{-1}(a)b+aP^{-1}(b)+\lambda ab.
$$
Hence
$$
 \delta(ab)
 =\delta(a)b+a\delta(b),
$$
so $\delta$ is a derivation of $R$.

The corresponding statement for modules is as follows.

\begin{theorem}\label{thm:invertible-equivalence}
Let $(R,P)$ be a Reynolds algebra of weight $\lambda$ with $P$
bijective, and let $\delta:=P^{-1}+\lambda\id_R$.
Then the assignment
$$
 (M,p_M)\longmapsto
 (M,p_M^{-1}+\lambda\id_M)
$$
defines an equivalence between the full subcategory of left
$(R,P)$-modules with bijective module operator and the full
subcategory of left differential $(R,\delta)$-modules $(M,d)$
for which $d-\lambda\id_M$ is bijective.
\end{theorem}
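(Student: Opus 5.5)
The plan mirrors the algebra-level computation given just before the statement. Let $(M,p_M)$ be a left $(R,P)$-module with $p_M$ bijective, and put $d:=p_M^{-1}+\lambda\id_M$.

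First, I will show that $(M,d)$ is a left differential $(R,\delta)$-module. Given $a\in R$ and $n\in M$, write $a=P(r)$ with $r=P^{-1}(a)$, and $n=p_M(m)$ with $m=p_M^{-1}(n)$; both choices are possible by bijectivity. Substituting these into Eq.~(\ref{eq:left-module}) gives
$$
 an=p_M\bigl(P^{-1}(a)n\bigr)+p_M\bigl(a\,p_M^{-1}(n)\bigr)+\lambda p_M(an).
$$
Applying $p_M^{-1}$ to both sides yields
$$
 p_M^{-1}(an)=P^{-1}(a)n+a\,p_M^{-1}(n)+\lambda an.
$$
Adding $\lambda an$ to both sides then gives $d(an)=\delta(a)n+a\,d(n)$. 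Moreover, $d-\lambda\id_M=p_M^{-1}$ is bijective, so $(M,d)$ lies in the target subcategory.

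Conversely, let $(M,d)$ be a differential $(R,\delta)$-module with $d-\lambda\id_M$ bijective, and set $p_M:=(d-\lambda\id_M)^{-1}$. Then $p_M$ is bijective and $p_M^{-1}+\lambda\id_M=d$. The computation above runs backwards step by step, and each step is an equivalence because $P$ and $p_M$ are bijective. Concretely, the Leibniz rule for $d$ is rewritten as the displayed identity for $p_M^{-1}$ and $P^{-1}$. Applying $p_M$ and substituting $a=P(r)$, $n=p_M(m)$ then recovers Eq.~(\ref{eq:left-module}) for all $r\in R$ and $m\in M$. The two assignments are mutually inverse on objects, since $(p_M^{-1}+\lambda\id_M)-\lambda\id_M=p_M^{-1}$, whose inverse is $p_M$.

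Next, morphisms. Both functors act as the identity on underlying $R$-linear maps, so it suffices to check that for an $R$-linear $f:M\to N$ between such objects, $f\circ p_M=p_N\circ f$ holds if and only if $f\circ d_M=d_N\circ f$. Since $p_M$ and $p_N$ are bijective, $fp_M=p_Nf$ is equivalent to $p_N^{-1}f=fp_M^{-1}$. Adding $\lambda f$ to both sides, and using that $f$ commutes with $\lambda\id$, makes this equivalent to $d_Nf=fd_M$. Hence the assignment is a fully faithful functor that is bijective on objects. It is therefore an isomorphism of categories, and in particular an equivalence.

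The only step requiring care is the reversibility of the substitution. The identity must be verified for all pairs $(r,m)$, and this is ensured by the surjectivity of $P$ and $p_M$ together with the injectivity of $p_M$, which lets us apply or cancel $p_M$ freely. No other obstacle is expected; the argument is a direct module analogue of the derivation computation for $\delta$.
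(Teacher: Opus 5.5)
Your proposal is correct and follows essentially the same route as the paper. You substitute $r=P^{-1}(a)$ and $m=p_M^{-1}(n)$ into the module identity and apply $p_M^{-1}$ to obtain the twisted Leibniz identity, reverse this for the converse, and note that commuting with $p_M$ is equivalent to commuting with $p_M^{-1}+\lambda\id_M$; your observation that the functor is bijective on objects and fully faithful, hence an isomorphism of categories, is a harmless sharpening of the paper's conclusion.
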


\begin{proof}
Let $(M,p_M)$ be a left $(R,P)$-module with $p_M$ bijective, and set
$d:=p_M^{-1}+\lambda\id_M$.
We first show that $(M,d)$ is a left differential $(R,\delta)$-module.

For $a\in R$ and $n\in M$, take $r=P^{-1}(a)$ and
$m=p_M^{-1}(n)$ in Eq.~(\ref{eq:left-module}). Since $P(r)=a$ and
$p_M(m)=n$, we obtain
$$
 an
 =p_M(P^{-1}(a)n)+p_M(ap_M^{-1}(n))
  +\lambda p_M(an).
$$
Applying $p_M^{-1}$ gives
$$
 p_M^{-1}(an)
 =P^{-1}(a)n+a p_M^{-1}(n)+\lambda an.
$$
Therefore
\begin{align*}
d(an) =p_M^{-1}(an)+\lambda an=P^{-1}(a)n+a p_M^{-1}(n)+2\lambda an =\delta(a)n+a d(n).
\end{align*}
Thus $(M,d)$ is a left differential $(R,\delta)$-module. Moreover,
$d-\lambda\id_M=p_M^{-1}$ is bijective.

Conversely, let $(M,d)$ be a left differential $(R,\delta)$-module
such that $d-\lambda\id_M$ is bijective, and set
$p_M:=(d-\lambda\id_M)^{-1}$.
Then $p_M^{-1}=d-\lambda\id_M$. For $a\in R$ and $n\in M$, the
differential module identity gives
\begin{align*}
 & p_M^{-1}(an) =d(an)-\lambda an=\delta(a)n+a d(n)-\lambda an=P^{-1}(a)n+a p_M^{-1}(n)+\lambda an.
\end{align*}
Taking $a=P(r)$ and $n=p_M(m)$, we get
$$
 p_M^{-1}\bigl(P(r)p_M(m)\bigr)
 =rp_M(m)+P(r)m+\lambda P(r)p_M(m).
$$
Applying $p_M$ yields
$$
 P(r)p_M(m)
 =p_M(rp_M(m))+p_M(P(r)m)
  +\lambda p_M(P(r)p_M(m)),
$$
which is Eq.~(\ref{eq:left-module}). Hence $(M,p_M)$ is a left
$(R,P)$-module with bijective module operator.

It remains to compare morphisms. Let $(M,p_M)$ and $(N,p_N)$ be left
$(R,P)$-modules with bijective module operators, and let
$f:M\to N$ be an $R$-module homomorphism. Then
$f\circ p_M=p_N\circ f$ if and only if $f\circ p_M^{-1}=p_N^{-1}\circ f$,
and this is equivalent to
$$
 f\circ(p_M^{-1}+\lambda\id_M)
 =(p_N^{-1}+\lambda\id_N)\circ f.
$$
Thus $f$ is a morphism of left $(R,P)$-modules if and only if it is
a morphism of the corresponding differential modules.

The defining formulas show that the two constructions are mutually
inverse on objects. Together with the preceding characterization of
morphisms, they define the stated equivalence.
\end{proof}

\section{Free Reynolds modules}\label{sec:free}

In this section, we recall free operated modules and construct free left $(R,P)$-modules. We also discuss a restricted universal property for ordinary free $R$-modules.

\subsection{Free operated modules and free Reynolds modules}
\label{subsec:free}

We first recall operated modules and their free objects
from~\cite{QiaoGaoGuo2019}. Throughout this subsection, let $(R,P)$ be
a Reynolds algebra of weight $\lambda$. The construction of free operated
modules only uses the algebra $R$, while the Reynolds operator $P$ enters
through the quotient relations below.

\begin{definition}\label{def:operated-module}
A \textbf{left operated $R$-module} is a pair $(M,q)$ consisting of a
left $R$-module $M$ and a linear map $q:M\to M$. A morphism of left
operated $R$-modules is an $R$-module homomorphism that commutes with
the operators. An \textbf{operated $R$-submodule} of $(M,q)$ is an $R$-submodule
$N\subseteq M$ such that $q(N)\subseteq N$.
\end{definition}

\begin{definition}\label{def:free-operated-module}
Let $X$ be a set. A \textbf{free left operated $R$-module on $X$} is a
left operated $R$-module $(F,q_F)$ together with a set map
$j:X\to F$ such that, for every left operated $R$-module $(M,q)$ and
every set map $f:X\to M$, there exists a unique morphism of left operated
$R$-modules $\widetilde f:(F,q_F)\rightarrow(M,q)$
such that $\widetilde f\circ j=f$.
\end{definition}

We now recall the explicit construction of the free left operated
$R$-module in~\cite{QiaoGaoGuo2019}. Let $X$ be a set and let $\kfield X$ be the free
module on $X$. Define
$$
 \mathcal{M}_R(X):=
 \bigoplus_{n\geq1}R^{\otimes n}\otimes\kfield X.
$$
The algebra $R$ acts on the first tensor factor. For $r_1,\ldots,r_n\in R$ and $x\in X$, write
$(r_1\otimes\cdots\otimes r_n)x$ for
$(r_1\otimes\cdots\otimes r_n)\otimes x$.
Define $q_X:\mathcal{M}_R(X)\to\mathcal{M}_R(X)$ by
$$
 q_X\bigl((r_1\otimes\cdots\otimes r_n)x\bigr)
 =(1_R\otimes r_1\otimes\cdots\otimes r_n)x,
$$
and define $j_X:X\to\mathcal{M}_R(X)$ by $j_X(x)=1_Rx$.

\begin{proposition}[{\cite{QiaoGaoGuo2019}}]\label{prop:free-operated}
The triple $(\mathcal{M}_R(X),q_X,j_X)$ is the free left operated
$R$-module on $X$.
\end{proposition}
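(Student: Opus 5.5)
To verify that $(\mathcal{M}_R(X),q_X,j_X)$ is the free left operated $R$-module on $X$, I would check directly that it is a left operated $R$-module and then establish the universal property in Definition~\ref{def:free-operated-module} by building the required morphism degree by degree.

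\textbf{The operated structure.} The left $R$-action on the first tensor factor, $r\cdot(r_1\otimes\cdots\otimes r_n)x=(rr_1\otimes r_2\otimes\cdots\otimes r_n)x$, is unital and associative because $R$ is. The map $q_X$ is well defined and linear, since it is induced componentwise by the multilinear maps $R^{\otimes n}\otimes\kfield X\to R^{\otimes(n+1)}\otimes\kfield X$ that insert $1_R$ in front. It is not $R$-linear, and it does not need to be.

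\textbf{Existence.} Let $(M,q)$ be a left operated $R$-module and $f:X\to M$ a set map. Define $\widetilde f$ on the degree-$n$ component by
$$
 \widetilde f\bigl((r_1\otimes\cdots\otimes r_n)x\bigr)
 =r_1q\bigl(r_2q(\cdots r_{n-1}q(r_nf(x))\cdots)\bigr).
$$
This expression is $\kfield$-multilinear in $(r_1,\ldots,r_n)$ and linear in $x$, so it descends to $R^{\otimes n}\otimes\kfield X$. This descent is the one genuinely technical point: multilinearity holds because $q$ and the action are $\kfield$-linear and the action is $\kfield$-bilinear.

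With this formula the required properties follow.
\begin{itemize}
\item $\widetilde f$ is $R$-linear, since the outermost factor $r_1$ multiplies directly.
\item $\widetilde f(j_X(x))=1_Rf(x)=f(x)$.
\item $\widetilde f\circ q_X=q\circ\widetilde f$, because prepending $1_R$ produces $1_R\,q(\cdots)=q(\cdots)$.
\end{itemize}

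\textbf{Uniqueness.} Every element $(r_1\otimes\cdots\otimes r_n)x$ can be written as
$$
 r_1q_X\bigl(r_2q_X(\cdots r_nj_X(x))\bigr).
$$
I would prove this by induction on $n$, using $q_X((s_1\otimes\cdots)x)=(1\otimes s_1\otimes\cdots)x$ followed by left multiplication by $r_1$. Any morphism $g$ of operated modules with $g\circ j_X=f$ commutes with the action and with the operators, so it must take the value given by the formula above on these generators. Hence $g=\widetilde f$.

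I do not expect a serious obstacle. The only step requiring care is confirming that the recursive formula for $\widetilde f$ is well defined on tensors, which is the descent argument in the existence step.
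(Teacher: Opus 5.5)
Your proof is correct. The paper gives no proof of its own and simply quotes the result from \cite{QiaoGaoGuo2019}. Your argument is the standard proof of this universal property: define the extension on pure tensors by $r_1q(r_2q(\cdots q(r_nf(x))\cdots))$, which descends by multilinearity, and get uniqueness because every pure tensor is obtained from $j_X(X)$ by iterating the $R$-action and $q_X$.
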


We introduce the notion of a free left $(R,P)$-module as follows.

\begin{definition}\label{def:free-reynolds-module}
Let $X$ be a set. A \textbf{free left $(R,P)$-module on $X$} is a
left $(R,P)$-module $(F,p_F)$ together with a set map $\iota:X\to F$
such that, for every left $(R,P)$-module $(M,p_M)$ and every set map
$f:X\to M$, there exists a unique morphism of left $(R,P)$-modules
$\widetilde f:F\to M$ such that $\widetilde f\circ\iota=f$.
\end{definition}

We now construct the free left $(R,P)$-module on $X$ from the free
operated $R$-module $\mathcal M_R(X)$. Let $I_X$ be the smallest
operated $R$-submodule of $\mathcal M_R(X)$ containing all elements
$$
 P(r)q_X(u)-q_X(rq_X(u))-q_X(P(r)u)
 -\lambda q_X(P(r)q_X(u)),
 \quad r\in R,\ u\in\mathcal M_R(X).
$$
Since $I_X$ is stable under $q_X$, the operator $q_X$ induces an
operator $\overline q_X$ on
$F_{R,P}(X):=\mathcal M_R(X)/I_X$.
Let $\pi_X:\mathcal M_R(X)\to F_{R,P}(X)$ be the quotient map and set
$\iota_X:=\pi_X\circ j_X$.

\begin{theorem}\label{thm:free-reynolds}
The pair $(F_{R,P}(X),\overline q_X)$, together with the map
$\iota_X:X\to F_{R,P}(X)$, is the free left $(R,P)$-module on $X$.
\end{theorem}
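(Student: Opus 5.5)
The argument has three parts: check that the quotient is a Reynolds module, build the induced map from the free operated module, and use the universal property of $\mathcal M_R(X)$ for uniqueness.

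\emph{Step 1: the quotient is a left $(R,P)$-module.} Since $I_X$ is an operated $R$-submodule, $F_{R,P}(X)$ is a left $R$-module and $\overline q_X$ is well defined, as in Lemma~\ref{lem:quotients}, which uses only $R$-linearity and stability. For $r\in R$ and a class $\pi_X(u)$ we have
$$P(r)\overline q_X(\pi_X(u))-\overline q_X(r\overline q_X(\pi_X(u)))-\overline q_X(P(r)\pi_X(u))-\lambda\overline q_X(P(r)\overline q_X(\pi_X(u))),$$
which equals $\pi_X$ applied to the generator of $I_X$ indexed by $(r,u)$. It therefore vanishes, so Eq.~(\ref{eq:left-module}) holds. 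Every element of $F_{R,P}(X)$ is of the form $\pi_X(u)$, so this covers all cases.

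\emph{Step 2: existence of the induced map.} Let $(M,p_M)$ be a left $(R,P)$-module and $f:X\to M$ a set map. Forgetting the identity, $(M,p_M)$ is a left operated $R$-module. Proposition~\ref{prop:free-operated} then gives a unique operated morphism $\widehat f:\mathcal M_R(X)\to M$ with $\widehat f\circ j_X=f$. We show that $\ker\widehat f$ contains $I_X$. Since $\widehat f$ is $R$-linear and commutes with operators, $\ker\widehat f$ is an operated $R$-submodule. Applying $\widehat f$ to a generator gives
$$P(r)p_M(\widehat f(u))-p_M(r p_M(\widehat f(u)))-p_M(P(r)\widehat f(u))-\lambda p_M(P(r)p_M(\widehat f(u))),$$
which is zero by Eq.~(\ref{eq:left-module}) in $M$. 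By minimality of $I_X$, we get $I_X\subseteq\ker\widehat f$. Hence $\widehat f$ factors through an $R$-linear map $\widetilde f:F_{R,P}(X)\to M$ satisfying $\widetilde f\circ\pi_X=\widehat f$. This map commutes with the operators because $\widetilde f\overline q_X\pi_X=\widehat f q_X=p_M\widehat f=p_M\widetilde f\pi_X$ and $\pi_X$ is surjective. Also $\widetilde f\circ\iota_X=\widehat f\circ j_X=f$.

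\emph{Step 3: uniqueness.} Let $g:F_{R,P}(X)\to M$ be any morphism of $(R,P)$-modules with $g\circ\iota_X=f$. Then $g\circ\pi_X$ is an operated morphism $\mathcal M_R(X)\to M$ extending $f$. By uniqueness in Proposition~\ref{prop:free-operated}, $g\circ\pi_X=\widehat f=\widetilde f\circ\pi_X$. Surjectivity of $\pi_X$ gives $g=\widetilde f$.

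No step is a genuine obstacle. The only point that needs care is in Step 2: the kernel of an operated morphism is an operated submodule, so the minimality of $I_X$ applies.
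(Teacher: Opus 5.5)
Your proposal is correct and follows the paper's proof essentially step for step. You check the Reynolds identity on classes $\pi_X(u)$ as the image of the generators of $I_X$, and you lift $f$ through the free operated module $\mathcal M_R(X)$, factoring through $F_{R,P}(X)$ because $\ker\widehat f$ is an operated submodule containing those generators; uniqueness then comes from the uniqueness in Proposition~\ref{prop:free-operated} together with the surjectivity of $\pi_X$.
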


\begin{proof}
Since $I_X$ is stable under $q_X$, the induced operator is given by
$\overline q_X(\pi_X(u))=\pi_X(q_X(u))$. By the definition of $I_X$,
for all $r\in R$ and $u\in\mathcal M_R(X)$, we have
\begin{align*}
 P(r)\overline q_X(\pi_X(u))
 =\overline q_X(r\overline q_X(\pi_X(u)))
   +\overline q_X(P(r)\pi_X(u))+\lambda\overline q_X(P(r)\overline q_X(\pi_X(u))).
\end{align*}
Since every element of $F_{R,P}(X)$ is of the form $\pi_X(u)$, this is
Eq.~(\ref{eq:left-module}). Thus
$(F_{R,P}(X),\overline q_X)$ is a left $(R,P)$-module.

Let $(M,p_M)$ be a left $(R,P)$-module and let $f:X\to M$ be a set map.
By Proposition~\ref{prop:free-operated}, there exists a unique morphism
of left operated $R$-modules
$\widetilde f:\mathcal M_R(X)\to M$ such that
$\widetilde f\circ j_X=f$. For all $r\in R$ and
$u\in\mathcal M_R(X)$,
\begin{align*}
 & \widetilde f\bigl(
 P(r)q_X(u)-q_X(rq_X(u))-q_X(P(r)u)
 -\lambda q_X(P(r)q_X(u))\bigr)\\*
=& P(r)p_M(\widetilde f(u))
   -p_M(rp_M(\widetilde f(u)))
   -p_M(P(r)\widetilde f(u))-\lambda p_M(P(r)p_M(\widetilde f(u)))=0
\end{align*}
by Eq.~(\ref{eq:left-module}). Since $\widetilde f$ is a morphism of
operated $R$-modules, its kernel is an operated $R$-submodule.
Therefore $I_X\subseteq\ker\widetilde f$, and $\widetilde f$ factors
uniquely through an $R$-linear map
$\overline f:F_{R,P}(X)\to M$ such that
$\overline f\circ\pi_X=\widetilde f$.

Since $\widetilde f$ commutes with the operators, so does $\overline f$.
Thus $\overline f$ is a morphism of left $(R,P)$-modules. Moreover,
$$
 \overline f\circ\iota_X
 =\overline f\circ\pi_X\circ j_X
 =\widetilde f\circ j_X
 =f.
$$

For uniqueness, let $g:F_{R,P}(X)\to M$ be a morphism of left
$(R,P)$-modules such that $g\circ\iota_X=f$. Then
$g\circ\pi_X:\mathcal M_R(X)\to M$ is a morphism of left operated
$R$-modules and
$$
 (g\circ\pi_X)\circ j_X
 =g\circ\iota_X=f.
$$
By Proposition~\ref{prop:free-operated},
$g\circ\pi_X=\widetilde f=\overline f\circ\pi_X$. Since $\pi_X$ is
surjective, $g=\overline f$.
\end{proof}

\begin{corollary}\label{cor:free-quotient}
Every left $(R,P)$-module is a quotient of a free left $(R,P)$-module.
\end{corollary}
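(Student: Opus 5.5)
Given a left $(R,P)$-module $(M,p_M)$, the plan is to map a free left $(R,P)$-module onto it. We take the free one on the underlying set of $M$ and use the identity as the set map.

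Let $X$ denote the underlying set of $M$, and let $f\colon X\to M$ be the identity map of sets. Theorem~\ref{thm:free-reynolds} says that $(F_{R,P}(X),\overline q_X)$, with the map $\iota_X$, is the free left $(R,P)$-module on $X$. By its universal property there is a unique morphism of left $(R,P)$-modules
$$
 \overline f\colon F_{R,P}(X)\longrightarrow M
 \quad\text{with}\quad
 \overline f\circ\iota_X=f.
$$
Every $m\in M$ satisfies $m=f(m)=\overline f(\iota_X(m))$, so $\overline f$ is surjective.

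Put $N:=\ker\overline f$. Because $\overline f$ is $R$-linear and commutes with the operators, $N$ is an $R$-submodule and $\overline q_X(N)\subseteq N$. Hence $N$ is a left $(R,P)$-submodule of $F_{R,P}(X)$, and Lemma~\ref{lem:quotients} makes $F_{R,P}(X)/N$ a quotient left $(R,P)$-module. The map $\overline f$ induces an $R$-module isomorphism
$$
 \theta\colon F_{R,P}(X)/N\longrightarrow M,\qquad u+N\longmapsto\overline f(u).
$$
It intertwines the operators:
$$
 \theta\bigl(\overline{q}(u+N)\bigr)=\overline f(\overline q_X(u))=p_M(\overline f(u))=p_M\bigl(\theta(u+N)\bigr),
$$
where $\overline q$ is the induced operator on the quotient. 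So $\theta$ is an isomorphism of left $(R,P)$-modules, which proves the corollary.

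No step is a genuine obstacle. The only care needed is in the last step, since "quotient" should mean a quotient in the sense of Lemma~\ref{lem:quotients}. That requires the kernel to be stable under the module operator, and this holds because $\overline f$ commutes with the operators.
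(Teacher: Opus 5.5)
Your proposal is correct and follows the paper's proof. Both apply Theorem~\ref{thm:free-reynolds} to the underlying set of $M$ with the identity map, and the resulting morphism is surjective because its composite with $\iota$ is $\id_M$. Your extra step, identifying $M$ with $F_{R,P}(M)/\ker\overline f$ through Lemma~\ref{lem:quotients}, is a valid refinement that the paper leaves implicit.
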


\begin{proof}
Let $(M,p_M)$ be a left $(R,P)$-module. Applying
Theorem~\ref{thm:free-reynolds} to the underlying set of $M$ and the
identity map $\id_M:M\to M$ gives a morphism of left $(R,P)$-modules
$\varepsilon:F_{R,P}(M)\to M$ such that
$\varepsilon\circ\iota_M=\id_M$. Hence $\varepsilon$ is surjective.
\end{proof}

\subsection{Ordinary free modules with a restricted universal property}
\label{subsec:restricted-free}

Following the corresponding construction for Rota-Baxter modules in
\cite{QiaoGaoGuo2019}, we consider a restricted universal property of
the ordinary free $R$-module.

Let $(M,p_M)$ be a left $(R,P)$-module. The set
$$
 M_C(M):=\{m\in M\mid p_M(rm)=P(r)m\text{ for all }r\in R\}
$$
is called the \textbf{set of module constants} of $M$.

For a set $X$, let $R^{(X)}$ be the free left $R$-module with basis
$X$. Define
$$
 P_X\Bigl(\sum_{x\in X}r_xx\Bigr)
 :=\sum_{x\in X}P(r_x)x.
$$

\begin{theorem}\label{thm:restricted-free}
The pair $(R^{(X)},P_X)$ is a left $(R,P)$-module. Moreover, for every
left $(R,P)$-module $(M,p_M)$ and every set map $f:X\to M_C(M)$, there
exists a unique morphism of left $(R,P)$-modules
$\overline f:(R^{(X)},P_X)\to(M,p_M)$ extending $f$.
\end{theorem}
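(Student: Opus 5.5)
First I will check that $(R^{(X)},P_X)$ satisfies Eq.~(\ref{eq:left-module}). Both sides of that identity are additive in $m$, so it suffices to take $m=sx$ with $s\in R$ and $x\in X$. In that case $P_X(sx)=P(s)x$, and for every $r\in R$,
$$
 P(r)P_X(sx)=P(r)P(s)x,\quad
 P_X(rP_X(sx))=P(rP(s))x,
$$
$$
 P_X(P(r)sx)=P(P(r)s)x,\quad
 P_X(P(r)P_X(sx))=P(P(r)P(s))x.
$$
The identity therefore reduces, coefficient by coefficient, to the Reynolds identity Eq.~(\ref{eq:weighted-reynolds}) in $R$ applied to the pair $(r,s)$. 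The argument is the same as in Example~\ref{ex:tensor-reynolds-algebra}, since $R^{(X)}\cong R\otimes\kfield X$ and $P_X=P\otimes\id$.

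For the universal property, uniqueness comes first. A morphism $\overline f$ that extends $f$ is in particular $R$-linear. Since $R^{(X)}$ is free on $X$, such a map is determined by its values on $X$, so there is at most one $R$-linear extension. For existence, I define $\overline f$ as the unique $R$-linear map with
$$
 \overline f\Bigl(\sum_x r_xx\Bigr)=\sum_x r_xf(x).
$$
It remains to show that $\overline f\circ P_X=p_M\circ\overline f$. Both sides are additive, so it suffices to check this on elements $rx$. On such an element,
$$
 \overline f(P_X(rx))=P(r)f(x),\qquad
 p_M(\overline f(rx))=p_M(rf(x)).
$$
Because $f(x)\in M_C(M)$, we have $p_M(rf(x))=P(r)f(x)$, and the two sides agree.

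This commutation check is exactly where the restriction to module constants is used, and it is the only step that is not entirely routine. Without the hypothesis $f(x)\in M_C(M)$ the operators need not be compatible. I do not expect any real obstacle beyond carrying out the verification above.
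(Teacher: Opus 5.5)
Your proposal is correct and follows essentially the same route as the paper. The module identity reduces coefficientwise to the Reynolds identity in $R$. The morphism $\overline f$ is the $R$-linear extension supplied by freeness, its commutation with the operators is checked on elements $rx$ using $f(x)\in M_C(M)$, and uniqueness comes from uniqueness of the $R$-linear extension.
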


\begin{proof}
The left $(R,P)$-module identity for $(R^{(X)},P_X)$ follows
coefficientwise from Eq.~(\ref{eq:weighted-reynolds}). Indeed, for
$r,s\in R$ and $x\in X$,
\begin{align*}
 & P(r)P_X(sx)=P(r)P(s)x\\
=& P(rP(s))x+P(P(r)s)x+\lambda P(P(r)P(s))x\\
=& P_X(rP_X(sx))+P_X(P(r)sx)
   +\lambda P_X(P(r)P_X(sx)).
\end{align*}
By linearity, the identity holds for every element of $R^{(X)}$.

By the universal property of the free $R$-module $R^{(X)}$, there is a
unique $R$-linear map $\overline f:R^{(X)}\to M$ extending $f$, given
by $\overline f(\sum r_xx)=\sum r_xf(x)$. For $x\in X$ and $r\in R$,
since $f(x)\in M_C(M)$, we have
$$
 p_M(\overline f(rx))
 =p_M(rf(x))
 =P(r)f(x)
 =\overline f(P(r)x)
 =\overline f(P_X(rx)).
$$
By linearity, $p_M\circ\overline f=\overline f\circ P_X$. Hence
$\overline f$ is a morphism of left $(R,P)$-modules. Its uniqueness
follows from the uniqueness of the underlying $R$-linear map.
\end{proof}

\begin{corollary}\label{cor:regular-universal}
Let $(M,p_M)$ be a left $(R,P)$-module. For every $m\in M_C(M)$,
there exists a unique morphism of left $(R,P)$-modules
$f:(R,P)\to(M,p_M)$ such that $f(1_R)=m$.
\end{corollary}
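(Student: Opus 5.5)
The plan is to apply Theorem~\ref{thm:restricted-free} with $X$ a one-point set $\{x\}$ and identify $R^{(X)}$ with $R$. Concretely, $R^{(\{x\})}=Rx$, and the map $rx\mapsto r$ is an isomorphism of left $R$-modules $R^{(\{x\})}\cong R$. I would check that it intertwines the operators: $P_X(rx)=P(r)x$ is sent to $P(r)$. So it is an isomorphism of left $(R,P)$-modules between $(R^{(\{x\})},P_X)$ and the regular module $(R,P)$ of Example~\ref{ex:regular-left}(a). Under this isomorphism the basis element $x$ corresponds to $1_R$.

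Given $m\in M_C(M)$, define the set map $f:\{x\}\to M_C(M)$ by $f(x)=m$. Theorem~\ref{thm:restricted-free} then gives a unique morphism $\overline f:(R^{(\{x\})},P_X)\to(M,p_M)$ with $\overline f(x)=m$. Composing with the inverse of the isomorphism above yields a morphism $(R,P)\to(M,p_M)$ sending $1_R$ to $m$. Explicitly this is $r\mapsto rm$.

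For uniqueness, any morphism $g:(R,P)\to(M,p_M)$ with $g(1_R)=m$ is in particular $R$-linear. Hence $g(r)=rg(1_R)=rm$, so $g$ is already determined by $R$-linearity alone. Equivalently, transporting $g$ back gives a morphism from $R^{(\{x\})}$ extending $f$, which equals $\overline f$ by the uniqueness clause of the theorem.

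There is no genuine obstacle here; the only point requiring care is confirming that the identification $R^{(\{x\})}\cong R$ respects the operators. For a fully self-contained check, one could instead verify directly that $r\mapsto rm$ commutes with the operators: $p_M(rm)=P(r)m$, which is exactly the defining condition $m\in M_C(M)$.
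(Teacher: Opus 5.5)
Your proposal is correct and matches the paper's proof, which simply applies Theorem~\ref{thm:restricted-free} with $X$ a singleton. Your check that $rx\mapsto r$ intertwines $P_X$ with $P$ and sends $x$ to $1_R$ spells out the identification $(R^{(\{x\})},P_X)\cong(R,P)$ that the paper leaves implicit.
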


\begin{proof}
Apply Theorem~\ref{thm:restricted-free} with $X$ a singleton.
\end{proof}

\section{The ring of Reynolds operators}\label{sec:enveloping}

In this section, we construct the ring of Reynolds operators and identify its left and right module categories. We then give an explicit graded description of this ring for idempotent Reynolds operators of weight $-1$ that fix the identity.

\subsection{Construction and module categories}\label{subsec:enveloping}

Let $(R,P)$ be a Reynolds algebra of weight $\lambda$. Let
$\kfield[Q]$ be the polynomial algebra in one indeterminate $Q$.
Consider the free product
$$
 \mathcal F_R:=R*_{\kfield}\kfield[Q].
$$
Let $\iota_R:R\to\mathcal F_R$ be the canonical algebra homomorphism.
We use the same symbol $Q$ for the image of the generator in
$\mathcal F_R$.

\begin{definition}\label{def:enveloping}
The \textbf{ring of Reynolds operators} on $(R,P)$, denoted by
$\U_{R,P}$, is the quotient
$$
 \U_{R,P}:=\mathcal F_R/I_{R,P},
$$
where $I_{R,P}$ is the two-sided ideal generated by
\begin{equation}\label{eq:enveloping-relation}
 \iota_R(P(r))Q-Q\iota_R(P(r))
 -Q\iota_R(r+\lambda P(r))Q,
 \quad r\in R.
\end{equation}
\end{definition}

We continue to denote the image of the generator in $\U_{R,P}$ by $Q$.
The ring of Reynolds operators has the following universal property.

\begin{proposition}
\label{prop:enveloping-universal}
Let $\pi:\mathcal F_R\to\U_{R,P}$ be the quotient map and set
$j_R:=\pi\circ\iota_R$. Then:
\begin{enumerate}[label=\textup{(\alph*)}]
\item The algebra homomorphism $j_R:R\to\U_{R,P}$ is injective.

\item Let $A$ be an algebra, let
$\alpha:R\to A$ be a unital algebra homomorphism, and let $q\in A$.
If
\begin{equation}\label{eq:enveloping-universal-relation}
 \alpha(P(r))q-q\alpha(P(r))
 -q\alpha(r+\lambda P(r))q=0
\end{equation}
for all $r\in R$, then there exists a unique unital algebra
homomorphism $\widetilde\alpha:\U_{R,P}\to A$ such that
$\widetilde\alpha\circ j_R=\alpha$ and
$\widetilde\alpha(Q)=q$.
\end{enumerate}
\end{proposition}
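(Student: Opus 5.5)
Part (b) is routine and I will do it first. By the universal property of the free product $\mathcal F_R=R*_{\kfield}\kfield[Q]$, the pair consisting of the unital homomorphism $\alpha:R\to A$ and the homomorphism $\kfield[Q]\to A$, $Q\mapsto q$, induces a unique unital algebra homomorphism $\widehat\alpha:\mathcal F_R\to A$ with $\widehat\alpha\circ\iota_R=\alpha$ and $\widehat\alpha(Q)=q$. Hypothesis~(\ref{eq:enveloping-universal-relation}) states exactly that $\widehat\alpha$ annihilates the generators~(\ref{eq:enveloping-relation}) of $I_{R,P}$. Hence $I_{R,P}\subseteq\ker\widehat\alpha$, and $\widehat\alpha$ factors through $\pi$ to give $\widetilde\alpha$. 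Uniqueness holds because $\U_{R,P}$ is generated as an algebra by $j_R(R)$ and $Q$.

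For part (a), I will use (b) to construct a concrete algebra $A$ into which $R$ embeds and which carries an element $q$ satisfying~(\ref{eq:enveloping-universal-relation}). The natural choice is $A=\operatorname{End}_{\kfield}(M)$ for a suitable left $(R,P)$-module $(M,p_M)$, with $\alpha(r)$ the left multiplication by $r$ and $q=p_M$.

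First I check that this choice satisfies the relation. Applied to $m\in M$, the expression
$$\alpha(P(r))q-q\alpha(P(r))-q\alpha(r+\lambda P(r))q$$
gives
$$P(r)p_M(m)-p_M(P(r)m)-p_M(rp_M(m))-\lambda p_M(P(r)p_M(m)).$$
This vanishes by Eq.~(\ref{eq:left-module}), so (b) produces $\widetilde\alpha$ with $\widetilde\alpha\circ j_R=\alpha$. Consequently, if $\alpha$ is injective, then so is $j_R$.

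The remaining requirement is that $M$ be a faithful left $R$-module carrying a Reynolds module operator. The simplest candidate is the regular module $(R,P)$ of Example~\ref{ex:regular-left}(a): since $R$ is unital, $r\mapsto(x\mapsto rx)$ is injective, because $r$ is recovered by evaluating at $1_R$. An even cheaper alternative is the module $(R,0)$ from Example~\ref{ex:regular-left}(b), where the relation holds trivially. Either choice makes $\alpha$ injective, so $j_R$ is injective.

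I do not expect a real obstacle, because injectivity reduces to the existence of a faithful Reynolds module, and Example~\ref{ex:regular-left} supplies one. The only care needed is with conventions. I must confirm that $\alpha$ is unital, which it is since $R$ is unital and acts unitally on itself. I must also confirm that the free product is taken in unital $\kfield$-algebras, so that its universal property yields unital maps.
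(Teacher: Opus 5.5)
Your proposal is correct and matches the paper's proof. Part (b) is identical. For (a), the paper also represents $\mathcal F_R$ on $\operatorname{End}_{\kfield}(R)$ via $r\mapsto L_r$ and $Q\mapsto P$, kills the relations using the Reynolds identity, and recovers $r$ as $L_r(1_R)$; you route this through (b) rather than the free product directly, which is only cosmetic. Your cheaper alternative with zero operator is also valid, and it shows slightly more: taking $A=R$, $\alpha=\id_R$ and $q=0$ gives a retraction $\U_{R,P}\to R$, so $j_R$ is even split.
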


\begin{proof}
For \textup{(a)}, let
$L_r\in\operatorname{End}_{\kfield}(R)$ be defined by
$L_r(x)=rx$.
The map $r\mapsto L_r$ is a unital algebra homomorphism.
By the universal property of the free product, there exists a unique
unital algebra homomorphism
$\rho:\mathcal F_R\to\operatorname{End}_{\kfield}(R)$ such that
$\rho(\iota_R(r))=L_r$ for all $r\in R$ and $\rho(Q)=P$.

For all $r,x\in R$, we have
\begin{align*}
 &\rho\bigl(
 \iota_R(P(r))Q-Q\iota_R(P(r))
 -Q\iota_R(r+\lambda P(r))Q
 \bigr)(x)\\
 =&
 P(r)P(x)-P(P(r)x)-P(rP(x))
 -\lambda P(P(r)P(x))=0,
\end{align*}
where the last equality follows from
Eq.~(\ref{eq:weighted-reynolds}).
Thus $\rho$ sends every generator of $I_{R,P}$ to zero, and hence
$I_{R,P}\subseteq\ker\rho$.
Therefore $\rho$ induces a unital algebra homomorphism
$\overline\rho:\U_{R,P}\to\operatorname{End}_{\kfield}(R)$
satisfying $\overline\rho\circ\pi=\rho$.

If $j_R(r)=0$, then
$L_r=\rho(\iota_R(r))=\overline\rho(j_R(r))=0$.
Evaluating at $1_R$ gives $r=L_r(1_R)=0$.
Therefore $j_R$ is injective.

For \textup{(b)}, by the universal property of the free product,
there exists a unique unital algebra homomorphism
$\widehat\alpha:\mathcal F_R\to A$ such that
$\widehat\alpha\circ\iota_R=\alpha$ and
$\widehat\alpha(Q)=q$.
By Eq.~(\ref{eq:enveloping-universal-relation}),
$\widehat\alpha$ sends every generator of $I_{R,P}$ to zero.
Hence $I_{R,P}\subseteq\ker\widehat\alpha$, so
$\widehat\alpha$ induces a unital algebra homomorphism
$\widetilde\alpha:\U_{R,P}\to A$ satisfying
$\widetilde\alpha\circ\pi=\widehat\alpha$.

Consequently,
$\widetilde\alpha\circ j_R=\alpha$ and
$\widetilde\alpha(Q)=q$.
Since $\U_{R,P}$ is generated by $j_R(R)$ and $Q$, these conditions
uniquely determine $\widetilde\alpha$.
\end{proof}

By Proposition~\ref{prop:enveloping-universal}\textup{(a)}, we
henceforth identify $R$ with its image in $\U_{R,P}$.
By the definition of $I_{R,P}$, the following identity holds in
$\U_{R,P}$:
\begin{equation}\label{eq:enveloping-compact}
 P(r)Q=QP(r)+Q(r+\lambda P(r))Q,
 \quad r\in R.
\end{equation}
We next relate the rings of Reynolds operators on $(R,P)$ and
$(R^{\mathrm{op}},P)$.

\begin{proposition}\label{prop:opposite-enveloping}
Let $Q_{\mathrm{op}}$ denote the operator generator of
$\U_{R^{\mathrm{op}},P}$. There is a canonical algebra isomorphism
$\U_{R^{\mathrm{op}},P}\to\U_{R,P}^{\mathrm{op}}$
determined by $r\mapsto r$ for all $r\in R^{\mathrm{op}}$ and
$Q_{\mathrm{op}}\mapsto -Q$.
\end{proposition}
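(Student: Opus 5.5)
We construct the isomorphism and its inverse with the universal property of Proposition~\ref{prop:enveloping-universal}\textup{(b)}, and then check that the two composites are identities on generators.

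\emph{Forward map.} Take $A=\U_{R,P}^{\mathrm{op}}$ and let $\alpha:R^{\mathrm{op}}\to\U_{R,P}^{\mathrm{op}}$ be the map $r\mapsto r$. It is a unital algebra homomorphism, because $j_R:R\to\U_{R,P}$ is one and we pass to opposites on both sides. Put $q=-Q$. Write $\cdot$ for the product in $\U_{R,P}^{\mathrm{op}}$, so $a\cdot b=ba$. We must verify Eq.~(\ref{eq:enveloping-universal-relation}) for $(R^{\mathrm{op}},P)$, which has weight $\lambda$ by Proposition~\ref{prop:right-sign}\textup{(a)}:
$$
 P(r)\cdot q-q\cdot P(r)-q\cdot(r+\lambda P(r))\cdot q=0 .
$$
Rewritten in $\U_{R,P}$, the left-hand side is
$$
 -QP(r)+P(r)Q-Q(r+\lambda P(r))Q ,
$$
where the minus signs come from the two copies of $q$: they cancel in the quadratic term and survive in the linear terms. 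This vanishes by Eq.~(\ref{eq:enveloping-compact}). So there is a unital homomorphism $\Phi:\U_{R^{\mathrm{op}},P}\to\U_{R,P}^{\mathrm{op}}$ with $\Phi(r)=r$ and $\Phi(Q_{\mathrm{op}})=-Q$.

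\emph{Inverse map.} Apply the same argument to the Reynolds algebra $(R^{\mathrm{op}},P)$, using $(R^{\mathrm{op}})^{\mathrm{op}}=R$. This gives a homomorphism $\U_{R,P}\to\U_{R^{\mathrm{op}},P}^{\mathrm{op}}$ with $r\mapsto r$ and $Q\mapsto -Q_{\mathrm{op}}$. Viewed as a map of opposite algebras, it is a homomorphism $\Psi:\U_{R,P}^{\mathrm{op}}\to\U_{R^{\mathrm{op}},P}$ with the same values on generators. To justify this cleanly, one applies Proposition~\ref{prop:enveloping-universal}\textup{(b)} to $A=\U_{R^{\mathrm{op}},P}^{\mathrm{op}}$ and then uses the fact that a homomorphism $B\to C^{\mathrm{op}}$ is the same as a homomorphism $B^{\mathrm{op}}\to C$.

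\emph{Composites.} $\Psi\circ\Phi$ fixes each $r$ and sends $Q_{\mathrm{op}}\mapsto -Q\mapsto Q_{\mathrm{op}}$. By the uniqueness part of Proposition~\ref{prop:enveloping-universal}\textup{(b)}, it is the identity. Symmetrically, $\Phi\circ\Psi$ fixes the generators of $\U_{R,P}^{\mathrm{op}}$, namely $R$ and $Q$, and these generate it, so it is the identity as well.

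The only step needing care is the sign bookkeeping in the verification of the relation for the forward map. Reversing the order of multiplication swaps $P(r)Q$ and $QP(r)$, and replacing $Q$ by $-Q$ restores the correct relative sign against the quadratic term. This is exactly why the isomorphism must send $Q_{\mathrm{op}}$ to $-Q$ rather than to $Q$.
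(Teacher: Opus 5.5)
Your proposal is correct and follows essentially the same route as the paper: both build the forward map from Proposition~\ref{prop:enveloping-universal}\textup{(b)} with $q=-Q$, check the relation through the same sign computation reducing to Eq.~(\ref{eq:enveloping-compact}), get the inverse by applying the construction to $(R^{\mathrm{op}},P)$ and passing to opposite algebras, and conclude because both composites fix generators. Your added justifications, namely the explicit use of $A=\U_{R^{\mathrm{op}},P}^{\mathrm{op}}$ and the explanation of why $Q_{\mathrm{op}}\mapsto Q$ would fail, are accurate refinements of the same argument.
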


\begin{proof}
By Proposition~\ref{prop:right-sign}\textup{(a)},
$(R^{\mathrm{op}},P)$ is a Reynolds algebra of weight $\lambda$.
The canonical embedding $R\to\U_{R,P}$ induces a unital algebra
homomorphism
$R^{\mathrm{op}}\to\U_{R,P}^{\mathrm{op}}$ given by $r\mapsto r$.

Let $\cdot_{\mathrm{op}}$ denote the multiplication in
$\U_{R,P}^{\mathrm{op}}$. For every $r\in R$, we have
\begin{align*}
 &P(r)\mathbin{\cdot_{\mathrm{op}}}(-Q)
  -(-Q)\mathbin{\cdot_{\mathrm{op}}}P(r)
  -(-Q)\mathbin{\cdot_{\mathrm{op}}}(r+\lambda P(r))
       \mathbin{\cdot_{\mathrm{op}}}(-Q)\\
 =& -QP(r)+P(r)Q-Q(r+\lambda P(r))Q=0,
\end{align*}
where the unmarked products are taken in $\U_{R,P}$ and the last
equality follows from Eq.~(\ref{eq:enveloping-compact}).
Thus Proposition~\ref{prop:enveloping-universal}\textup{(b)}
gives a unital algebra homomorphism
$\U_{R^{\mathrm{op}},P}\to\U_{R,P}^{\mathrm{op}}$
sending $r$ to $r$ and $Q_{\mathrm{op}}$ to $-Q$.

Applying the same construction to $R^{\mathrm{op}}$ gives a
homomorphism
$\U_{R,P}\to\U_{R^{\mathrm{op}},P}^{\mathrm{op}}$
sending $r$ to $r$ and $Q$ to $-Q_{\mathrm{op}}$.
Taking opposite algebras yields a homomorphism
$\U_{R,P}^{\mathrm{op}}\to\U_{R^{\mathrm{op}},P}$
with the same values on the generators.
The composites in both orders fix every algebra generator.
Hence the two homomorphisms are mutually inverse.
\end{proof}

For an algebra $C$, write $C\text{-}\mathsf{Mod}$ and
$\mathsf{Mod}\text{-}C$ for the categories of left and right
$C$-modules, respectively.

\begin{theorem}\label{thm:category-equivalence}
Let $(R,P)$ be a Reynolds algebra of weight $\lambda$.
\begin{enumerate}[label=\textup{(\alph*)}]
\item The category ${}_{(R,P)}\mathsf{Mod}$ is naturally equivalent to
$\U_{R,P}\text{-}\mathsf{Mod}$.

\item The category $\mathsf{Mod}_{(R,P)}$ is naturally equivalent to
$\mathsf{Mod}\text{-}\U_{R,P}$.

\item Let $(S,T)$ be a Reynolds algebra of weight $\mu$.
Under these equivalences, $(S,T)$-$(R,P)$-bimodules correspond to
$\U_{S,T}$-$\U_{R,P}$-bimodules.
\end{enumerate}
\end{theorem}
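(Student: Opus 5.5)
The plan is to build the equivalences directly from the universal property in Proposition~\ref{prop:enveloping-universal}\textup{(b)}, taking $A=\operatorname{End}_{\kfield}(M)$. For \textup{(a)}, let $(M,p_M)$ be a left $(R,P)$-module. Let $\alpha:R\to\operatorname{End}_{\kfield}(M)$ be the structure map $r\mapsto L_r$, and set $q=p_M$. Evaluating the element $\alpha(P(r))q-q\alpha(P(r))-q\alpha(r+\lambda P(r))q$ at $m\in M$ gives
$$
P(r)p_M(m)-p_M(P(r)m)-p_M(rp_M(m))-\lambda p_M(P(r)p_M(m)).
$$
This vanishes precisely by Eq.~(\ref{eq:left-module}). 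The proposition therefore yields a unique unital homomorphism $\U_{R,P}\to\operatorname{End}_{\kfield}(M)$ extending $\alpha$ and sending $Q\mapsto p_M$. In other words, $M$ becomes a left $\U_{R,P}$-module in which $Q$ acts as $p_M$.

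Conversely, given a left $\U_{R,P}$-module $N$, restrict the action along $R\subseteq\U_{R,P}$ and put $p_N(n)=Qn$. Applying Eq.~(\ref{eq:enveloping-compact}) to $n$ gives back Eq.~(\ref{eq:left-module}). On morphisms we proceed as follows. An $R$-linear map commuting with the operators commutes with the actions of all generators $R\cup\{Q\}$, hence with all of $\U_{R,P}$, and the converse is clear. The two constructions are mutually inverse on objects, because a $\U_{R,P}$-action is determined by the actions of the generators. Both functors are the identity on underlying sets and maps, so we in fact obtain an isomorphism of categories.

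For \textup{(b)}, I would not repeat the computation. Instead I would reduce to \textup{(a)} by combining Proposition~\ref{prop:right-sign} with Proposition~\ref{prop:opposite-enveloping}. A right $(R,P)$-module $(M,p_M)$ corresponds, via $q_M=-p_M$ and Proposition~\ref{prop:right-sign}\textup{(b),(c)}, to a left $(R^{\mathrm{op}},P)$-module $(M,-p_M)$. By \textup{(a)} this is a left $\U_{R^{\mathrm{op}},P}$-module in which $Q_{\mathrm{op}}$ acts by $-p_M$. Transporting along the isomorphism $\U_{R^{\mathrm{op}},P}\cong\U_{R,P}^{\mathrm{op}}$, which sends $Q_{\mathrm{op}}\mapsto-Q$, we get a right $\U_{R,P}$-module in which $Q$ acts by $p_M$ and $R$ acts by the original right action. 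The two signs cancel, and this sign bookkeeping is the point that needs care. As a sanity check, I would also verify directly that right multiplication of the relation $P(r)Q=QP(r)+Q(r+\lambda P(r))Q$ on $m$ reproduces Eq.~(\ref{eq:right-module}) with $m\cdot Q=p_M(m)$. Reading the product $m\cdot(P(r)Q)$ as $p_M(mP(r))$ and $m\cdot(QP(r))$ as $p_M(m)P(r)$ does give exactly Eq.~(\ref{eq:right-module}). Morphisms are handled as in \textup{(a)}.

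For \textup{(c)}, a $\U_{S,T}$-$\U_{R,P}$-bimodule is the same as a left $\U_{S,T}\otimes\U_{R,P}^{\mathrm{op}}$-module, that is, a left $\U_{S,T}$-action and a right $\U_{R,P}$-action that commute. By \textup{(a)} and \textup{(b)}, the two one-sided structures are exactly conditions \textup{(i)}--\textup{(iii)} of Definition~\ref{def:bimodule}, with $p_B^S$ the action of $Q_S$ and $p_B^R$ the action of $Q_R$. The actions commute if and only if each generator of one side commutes with each generator of the other. The pairs ($s$, $r$), ($s$, $Q_R$), ($Q_S$, $r$) and ($Q_S$, $Q_R$) give respectively the bimodule condition \textup{(i)} and the three identities in \textup{(iv)}. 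Morphisms again match. I expect the only real obstacle to be the sign conventions in \textup{(b)}; everything else is a formal consequence of Proposition~\ref{prop:enveloping-universal}.
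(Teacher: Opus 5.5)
Your proof is correct. Parts (a) and (c) follow the paper's argument essentially verbatim. In (a) you apply Proposition~\ref{prop:enveloping-universal}\textup{(b)} with $A=\operatorname{End}_{\kfield}(M)$ and compare morphisms generator by generator. In (c) you check commutation of the two actions on the pairs of generators.

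For (b) you take a genuinely different route. The paper argues directly. It extends the right $R$-action to a right action of the free product $\mathcal F_R$ with $mQ=p_M(m)$, and checks that each generator of $I_{R,P}$ acts by zero, which is exactly Eq.~(\ref{eq:right-module}).

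You instead reduce to (a) for $(R^{\mathrm{op}},P)$. You use Proposition~\ref{prop:right-sign}\textup{(b),(c)} and transport along the isomorphism $\U_{R^{\mathrm{op}},P}\cong\U_{R,P}^{\mathrm{op}}$ of Proposition~\ref{prop:opposite-enveloping}. Your sign bookkeeping is right: $Q$ corresponds to $-Q_{\mathrm{op}}$, which acts by $-(-p_M)=p_M$, and $r$ acts by $m\mapsto mr$. There is no circularity, since both propositions precede the theorem and depend only on Proposition~\ref{prop:enveloping-universal} and Eq.~(\ref{eq:enveloping-compact}). Every step of your chain is an isomorphism of categories, so morphisms are handled automatically.

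The two routes buy different things. The paper's route is self-contained and costs one line of computation. Yours avoids redoing that computation and explains conceptually why the convention in Definition~\ref{def:right-module}\textup{(b)} makes left and right Reynolds modules into modules over the same ring: the sign in Proposition~\ref{prop:right-sign}\textup{(c)} cancels the sign $Q_{\mathrm{op}}\mapsto -Q$.

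Your closing ``sanity check'' by itself only covers the direction from right $\U_{R,P}$-modules to right $(R,P)$-modules. That is harmless, since the reduction already gives both directions.
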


\begin{proof}
For \textup{(a)}, let $(M,p_M)$ be a left $(R,P)$-module.
The left $R$-action determines a unital algebra homomorphism
$R\to\operatorname{End}_{\kfield}(M)$.
By Eq.~(\ref{eq:left-module}), this homomorphism and the endomorphism
$p_M$ satisfy Eq.~(\ref{eq:enveloping-universal-relation}).
Thus Proposition~\ref{prop:enveloping-universal}\textup{(b)}
gives a unique left $\U_{R,P}$-action extending the $R$-action and
satisfying $Qm=p_M(m)$ for all $m\in M$.

Conversely, let $M$ be a left $\U_{R,P}$-module.
Restrict its action to $R$ and define $p_M(m)=Qm$.
For all $r\in R$ and $m\in M$, Eq.~(\ref{eq:enveloping-compact})
gives
\begin{align*}
 & P(r)p_M(m)
=\bigl(QP(r)+Q(r+\lambda P(r))Q\bigr)m\\
=& p_M(P(r)m)+p_M(rp_M(m))
   +\lambda p_M(P(r)p_M(m)).
\end{align*}
Hence $(M,p_M)$ is a left $(R,P)$-module.

Since $\U_{R,P}$ is generated by $R$ and $Q$, an $R$-module
homomorphism $f:M\to N$ is $\U_{R,P}$-linear if and only if
$f\circ p_M=p_N\circ f$.
The two constructions are mutually inverse on objects and leave the
underlying maps unchanged. Hence they define mutually inverse
functors, proving \textup{(a)}.

For \textup{(b)}, let $(M,p_M)$ be a right $(R,P)$-module.
By the universal property of the free product, the right $R$-action
extends uniquely to a right $\mathcal F_R$-action such that
$mQ=p_M(m)$.
For all $r\in R$ and $m\in M$, we have
\begin{align*}
 &m\bigl(\iota_R(P(r))Q-Q\iota_R(P(r))
       -Q\iota_R(r+\lambda P(r))Q\bigr)\\
=& p_M(mP(r))-p_M(m)P(r)-p_M(p_M(m)r)
   -\lambda p_M(p_M(m)P(r))=0,
\end{align*}
where the last equality follows from Eq.~(\ref{eq:right-module}).
Thus every generator of $I_{R,P}$ acts trivially, and the action
descends to a right $\U_{R,P}$-action.

Conversely, let $M$ be a right $\U_{R,P}$-module.
Restrict its action to $R$ and define $p_M(m)=mQ$.
Applying Eq.~(\ref{eq:enveloping-compact}) to $m$ on the right gives
Eq.~(\ref{eq:right-module}). Hence $(M,p_M)$ is a right
$(R,P)$-module.
A right $R$-module homomorphism is $\U_{R,P}$-linear if and only if
it commutes with the module operators.
As in \textup{(a)}, these constructions define mutually inverse
functors, proving \textup{(b)}.

For \textup{(c)}, let $B$ be a
$\U_{S,T}$-$\U_{R,P}$-bimodule.
Restricting the two actions gives an $S$-$R$-bimodule.
Define $p_B^S$ by the left action of the operator generator of
$\U_{S,T}$ and $p_B^R$ by the right action of the operator generator
of $\U_{R,P}$.
By \textup{(a)} and \textup{(b)}, $(B,p_B^S)$ is a left
$(S,T)$-module and $(B,p_B^R)$ is a right $(R,P)$-module.
Since the left and right actions commute,
$p_B^R(sb)=s p_B^R(b)$ and
$p_B^S(br)=p_B^S(b)r$ for all
$s\in S$, $r\in R$, and $b\in B$.
The actions of the two operator generators also commute, so
$p_B^S\circ p_B^R=p_B^R\circ p_B^S$.
Thus $(B,p_B^S,p_B^R)$ satisfies
Definition~\ref{def:bimodule}.

Conversely, let $(B,p_B^S,p_B^R)$ be an
$(S,T)$-$(R,P)$-bimodule.
Parts~\textup{(a)} and~\textup{(b)} give a left
$\U_{S,T}$-action and a right $\U_{R,P}$-action on $B$.
The $S$-$R$-bimodule identity and the compatibility conditions in
Definition~\ref{def:bimodule} show that these actions commute on the
algebra generators.
Hence they commute on the whole rings, making $B$ a
$\U_{S,T}$-$\U_{R,P}$-bimodule.

These constructions are mutually inverse.
An $S$-$R$-bimodule homomorphism commutes with both module operators
if and only if it is a $\U_{S,T}$-$\U_{R,P}$-bimodule homomorphism,
since the two rings are generated by their respective algebras and
operator generators.
This proves \textup{(c)}.
\end{proof}

\begin{corollary}\label{cor:free-enveloping}
For every set $X$, there is a canonical isomorphism of left
$(R,P)$-modules
$F_{R,P}(X)\cong\U_{R,P}^{(X)}$,
where $\U_{R,P}^{(X)}$ is the free left $\U_{R,P}$-module with
basis $X$, regarded as a left $(R,P)$-module via
Theorem~\ref{thm:category-equivalence}.
\end{corollary}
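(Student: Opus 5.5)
The plan is to compare universal properties. Both $(F_{R,P}(X),\overline q_X,\iota_X)$ and $\U_{R,P}^{(X)}$ (with the basis inclusion $X\to\U_{R,P}^{(X)}$) will be shown to be free left $(R,P)$-modules on $X$ in the sense of Definition~\ref{def:free-reynolds-module}. Uniqueness of universal objects up to unique isomorphism then gives the canonical isomorphism. For $F_{R,P}(X)$ this is exactly Theorem~\ref{thm:free-reynolds}, so the only work is on the $\U_{R,P}$ side.

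Write $\Phi:{}_{(R,P)}\mathsf{Mod}\to\U_{R,P}\text{-}\mathsf{Mod}$ for the equivalence of Theorem~\ref{thm:category-equivalence}(a). It does not change underlying sets or maps, and it sends a morphism of $(R,P)$-modules to the same map regarded as a $\U_{R,P}$-linear map. Let $e:X\to\U_{R,P}^{(X)}$ be the basis map, and give $\U_{R,P}^{(X)}$ the operator $u\mapsto Qu$, which makes it a left $(R,P)$-module. Take a left $(R,P)$-module $(M,p_M)$ and a set map $f:X\to M$. Regard $M$ as a $\U_{R,P}$-module with $Qm=p_M(m)$. The universal property of free $\U_{R,P}$-modules gives a unique $\U_{R,P}$-linear $\widetilde f$ with $\widetilde f\circ e=f$. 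Since $\Phi$ is the identity on maps, $\widetilde f$ is $R$-linear and commutes with the operators, so it is a morphism of $(R,P)$-modules. Conversely, any $(R,P)$-morphism $g$ with $g\circ e=f$ is $\U_{R,P}$-linear by Theorem~\ref{thm:category-equivalence}(a), hence $g=\widetilde f$. So $(\U_{R,P}^{(X)},e)$ is free on $X$.

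The universal property of $F_{R,P}(X)$ then gives a unique morphism $\phi:F_{R,P}(X)\to\U_{R,P}^{(X)}$ with $\phi\circ\iota_X=e$. The universal property of $\U_{R,P}^{(X)}$ gives a unique $\psi$ in the other direction with $\psi\circ e=\iota_X$. The composite $\psi\circ\phi$ is a morphism fixing $\iota_X$, so by uniqueness in Theorem~\ref{thm:free-reynolds} it equals the identity. Symmetrically, $\phi\circ\psi$ is the identity. Hence $\phi$ is the canonical isomorphism.

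The only point needing care is the transfer of uniqueness: a morphism of $(R,P)$-modules into $M$ must genuinely be $\U_{R,P}$-linear. This holds because $\U_{R,P}$ is generated by $R$ and $Q$, which is already established in Theorem~\ref{thm:category-equivalence}. An optional explicit description of $\phi$ is $\pi_X((r_1\otimes\cdots\otimes r_n)x)\mapsto r_1Qr_2Q\cdots Qr_n\,x$. I would mention this only as a remark.
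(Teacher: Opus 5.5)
Your proposal is correct and follows the paper's proof: the paper also observes that, under Theorem~\ref{thm:category-equivalence}, the free module $\U_{R,P}^{(X)}$ satisfies the universal property of the free left $(R,P)$-module on $X$, and it concludes via Theorem~\ref{thm:free-reynolds}. You additionally write out the transfer of existence and uniqueness and the standard two-sided inverse argument, and your optional explicit formula $\pi_X((r_1\otimes\cdots\otimes r_n)x)\mapsto r_1Qr_2Q\cdots Qr_n\,x$ is also correct.
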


\begin{proof}
Under the equivalence in Theorem~\ref{thm:category-equivalence},
the free left $\U_{R,P}$-module $\U_{R,P}^{(X)}$ satisfies the
universal property of the free left $(R,P)$-module on $X$.
The result therefore follows from
Theorem~\ref{thm:free-reynolds}.
\end{proof}

\subsection{Idempotent operators fixing the identity}
\label{subsec:idempotent-normal}

Throughout this subsection, let $(R,P)$ be a Reynolds algebra of
weight $-1$ such that $P^2=P$ and $P(1_R)=1_R$.
Set $A:=\im P$ and $K:=\ker P$.

By Lemma~\ref{lem:classical-idempotent}, $P$ is an averaging operator
and $R=A\oplus K$. Moreover, $A$ is a subalgebra of $R$ with identity
$1_R$. The averaging identities give $P(ar)=aP(r)$ and
$P(ra)=P(r)a$ for all $a\in A$ and $r\in R$.
Thus $P:R\to A$ is an $A$-bimodule homomorphism.

\begin{lemma}\label{lem:idempotent-relations}
The ideal $I_{R,P}\subseteq\mathcal F_R$ defined in
Definition~\ref{def:enveloping} is generated by
$\iota_R(a)Q-Q\iota_R(a)$ for $a\in A$ and
$Q\iota_R(k)Q$ for $k\in K$.
Consequently, in $\U_{R,P}$,
$Q^i rQ^j=P(r)Q^{i+j}$ for all $r\in R$ and integers $i,j\geq1$.
\end{lemma}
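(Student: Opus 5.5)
Let $J\subseteq\mathcal F_R$ be the two-sided ideal generated by $\iota_R(a)Q-Q\iota_R(a)$ for $a\in A$ and $Q\iota_R(k)Q$ for $k\in K$. We prove $I_{R,P}=J$ by showing that each ideal contains the generators of the other. Linearity in $r$ together with $R=A\oplus K$ (Lemma~\ref{lem:classical-idempotent}) lets us check the generators of $I_{R,P}$ separately for $r=a\in A$ and $r=k\in K$. Since $\lambda=-1$, the generator in Eq.~(\ref{eq:enveloping-relation}) is
$$
 g(r):=\iota_R(P(r))Q-Q\iota_R(P(r))-Q\iota_R(r-P(r))Q .
$$
For $a\in A$ we have $P(a)=a$, so $g(a)=\iota_R(a)Q-Q\iota_R(a)$. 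For $k\in K$ we have $P(k)=0$, so $g(k)=-Q\iota_R(k)Q$. Writing $r=a+k$ gives $g(r)=g(a)+g(k)$, and these are exactly the generators of $J$ up to sign. Hence $I_{R,P}\subseteq J$. Conversely, each generator of $J$ equals $\pm g(x)$ for some $x\in A$ or $x\in K$, so $J\subseteq I_{R,P}$. This proves the first assertion. The only thing to watch is that $\iota_R$ is additive, so that $g$ is linear and the splitting is legitimate.

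For the consequence, work in $\U_{R,P}$. Here $aQ=Qa$ for $a\in A$ and $QkQ=0$ for $k\in K$. Since $A$ commutes with $Q$, it commutes with every power $Q^n$. Given $r\in R$, write $r=P(r)+k$ with $k=r-P(r)\in K$. Then for $i,j\geq1$,
$$
 Q^irQ^j=Q^iP(r)Q^j+Q^{i-1}(QkQ)Q^{j-1}=P(r)Q^{i+j}+0,
$$
where the first term uses $P(r)\in A$ to move $P(r)$ past $Q^i$.

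I do not expect a real obstacle here: the whole proof is a direct verification. The one point needing care is the sign and weight bookkeeping, namely that at $\lambda=-1$ the middle factor $r+\lambda P(r)$ becomes $r-P(r)$. This factor vanishes on $A$ and is the identity on $K$, and that is exactly why the relations split cleanly along $R=A\oplus K$.
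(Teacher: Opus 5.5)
Your proof is correct and follows essentially the same route as the paper. At weight $-1$ you identify the generators as $g(a)=\iota_R(a)Q-Q\iota_R(a)$ and $g(k)=-Q\iota_R(k)Q$, use $r=P(r)+(r-P(r))$ to get both inclusions of ideals, and then obtain $Q^irQ^j=P(r)Q^{i+j}$ from $aQ=Qa$ and $QkQ=0$. The only difference is that you split $Q^irQ^j$ in one step, whereas the paper first isolates $QrQ=P(r)Q^2$.
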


\begin{proof}
For $a\in A$, we have $P(a)=a$. By
Eq.~(\ref{eq:enveloping-relation}), since the weight is $-1$, the
defining generator of $I_{R,P}$ corresponding to $a$ is
$$
 \iota_R(P(a))Q-Q\iota_R(P(a))
 -Q\iota_R(a-P(a))Q
 =\iota_R(a)Q-Q\iota_R(a).
$$
For $k\in K$, we have $P(k)=0$. Again by
Eq.~(\ref{eq:enveloping-relation}), the defining generator
corresponding to $k$ is $-Q\iota_R(k)Q$.
Therefore
$\iota_R(a)Q-Q\iota_R(a)\in I_{R,P}$ for all $a\in A$, and
$Q\iota_R(k)Q\in I_{R,P}$ for all $k\in K$.

Let $J$ be the two-sided ideal of $\mathcal F_R$ generated by these
two families. The preceding argument shows that $J\subseteq I_{R,P}$.

Conversely, let $r\in R$. Since $P^2=P$, we have
$P(r)\in A$ and $r-P(r)\in K$. Set $a=P(r)$ and $k=r-P(r)$. By
Eq.~(\ref{eq:enveloping-relation}), since the weight is $-1$, the
defining generator of $I_{R,P}$ corresponding to $r$ is
$$
 \iota_R(P(r))Q-Q\iota_R(P(r))
 -Q\iota_R(r-P(r))Q.
$$
Since $P(r)=a$ and $r-P(r)=k$, we have
$$
 \iota_R(P(r))Q-Q\iota_R(P(r))
 -Q\iota_R(r-P(r))Q
 =
 \iota_R(a)Q-Q\iota_R(a)-Q\iota_R(k)Q.
$$
The right-hand side belongs to $J$ by the definition of $J$.
Hence every defining generator of $I_{R,P}$ lies in $J$, and therefore
$I_{R,P}\subseteq J$. Consequently, $I_{R,P}=J$.

Passing to the quotient $\U_{R,P}=\mathcal F_R/I_{R,P}$, we obtain
$aQ=Qa$ for all $a\in A$ and $QkQ=0$ for all $k\in K$.
For any $r\in R$, write $r=P(r)+(r-P(r))$. Then
$$
 QrQ
 =QP(r)Q+Q(r-P(r))Q
 =QP(r)Q.
$$
Since $P(r)\in A$, we have $QP(r)=P(r)Q$, and hence
$QrQ=P(r)Q^2$. Therefore, for integers $i,j\geq1$,
$$
 Q^i rQ^j
 =Q^{i-1}(QrQ)Q^{j-1}
 =Q^{i-1}P(r)Q^{j+1}.
$$
Again using $QP(r)=P(r)Q$, we obtain $Q^i rQ^j=P(r)Q^{i+j}$,
as claimed.
\end{proof}

Assign degree zero to the elements of $R$ and degree one to $Q$.
By Lemma~\ref{lem:idempotent-relations}, the defining ideal
$I_{R,P}$ is homogeneous, so $\U_{R,P}$ inherits a grading by
nonnegative integers.

The same lemma suggests a concrete description of its homogeneous
components. Indeed, for $a\in A$ and $n\geq1$, the relation $aQ=Qa$
gives
$$
 raQ^ns=rQ^nas.
$$
Thus the degree-$n$ expressions $rQ^ns$ are naturally parametrized
by $R\otimes_A R$. Moreover, for $i,j\geq1$,
$$
 (rQ^is)(uQ^jv)
 =rQ^i(su)Q^jv
 =rP(su)Q^{i+j}v
$$
by Lemma~\ref{lem:idempotent-relations}. This motivates the following
graded algebra.

For each $n\geq1$, let $(R\otimes_A R)t^n$ be a copy of
$R\otimes_A R$ placed in degree $n$. Set
$$
 \mathcal E_P(R)
 :=R\oplus\bigoplus_{n\geq1}(R\otimes_A R)t^n.
$$
For $r,s\in R$, write $(r\otimes s)t^n$ for the element corresponding
to $r\otimes_A s$ in degree $n$.

The degree-zero part has the original multiplication of $R$.
For $x,r,s\in R$ and $n\geq1$, define
$x((r\otimes s)t^n)=(xr\otimes s)t^n$ and
$((r\otimes s)t^n)x=(r\otimes sx)t^n$.
For $r,s,u,v\in R$ and $i,j\geq1$, define
$$
 ((r\otimes s)t^i)((u\otimes v)t^j)
 :=(rP(su)\otimes v)t^{i+j}.
$$
Extend these products bilinearly.

\begin{theorem}\label{thm:idempotent-normal-form}
The above multiplication makes $\mathcal E_P(R)$ a unital graded
associative algebra. Moreover, there is a graded algebra isomorphism
$$
 \Phi:\mathcal E_P(R)\longrightarrow\U_{R,P}
$$
determined by $\Phi(x)=x$ for $x\in R$ and
$\Phi((r\otimes s)t^n)=rQ^ns$ for $r,s\in R$ and $n\geq1$.

Consequently, every element of $\U_{R,P}$ is uniquely determined by
a degree-zero component $x\in R$ and a finitely supported family
$(\xi_n)_{n\geq1}$ with $\xi_n\in R\otimes_A R$.
\end{theorem}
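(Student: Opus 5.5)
The plan is to build maps in both directions and check that they are mutually inverse.

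\textbf{Step 1: $\mathcal E_P(R)$ is a unital graded associative algebra.} The products are well defined on $R\otimes_A R$. The only nontrivial check is the degree-positive product. It is $A$-balanced in $r\otimes s$ because $P(sau)=P(s\,au)$ trivially. It is balanced in $u\otimes v$ because $P(s\,ua)=P(su)a$, using that $P$ is a right $A$-module map. Likewise, moving $a$ across in $r\otimes s$ gives $P(sa\,u)=P(s\,au)$.

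Associativity follows from a few identities on generators:
\begin{enumerate}[label=\textup{(\roman*)}]
\item $R$ acts bimodule-wise, which is clear.
\item The products $x\cdot(\xi\eta)$, $(\xi x)\eta=\xi(x\eta)$ and $(\xi\eta)x$ are compatible. For instance, $((r\otimes sx)t^i)((u\otimes v)t^j)=(rP(sxu)\otimes v)t^{i+j}=((r\otimes s)t^i)((xu\otimes v)t^j)$.
\item For triple positive products, we compute
$$\bigl(rP(su)\otimes v\bigr)\cdot(w\otimes z)\mapsto rP(su)P(vw)\otimes z$$
and
$$(r\otimes s)\cdot\bigl(uP(vw)\otimes z\bigr)\mapsto rP(suP(vw))\otimes z.$$
These agree since $P(suP(vw))=P(su)P(vw)$ by the averaging identity of Lemma~\ref{lem:classical-idempotent}.
\end{enumerate}
The unit is $1_R$, and gradedness is evident.

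\textbf{Step 2: the map $\Phi$.} Define $\Phi$ as stated. It is well defined on $R\otimes_A R$ because $aQ^n=Q^na$ by Lemma~\ref{lem:idempotent-relations}. It is multiplicative: this follows from the computation $(rQ^is)(uQ^jv)=rP(su)Q^{i+j}v$ preceding the theorem, together with the trivial cases involving degree zero. It is surjective because $\U_{R,P}$ is spanned by words $r_0Qr_1Q\cdots Qr_n$. By Lemma~\ref{lem:idempotent-relations} such a word collapses to $r_0P(r_1)\cdots P(r_{n-1})Q^nr_n$, which lies in the image.

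\textbf{Step 3: injectivity via an inverse.} This is the main step. Use Proposition~\ref{prop:enveloping-universal}(b) with target $\mathcal E_P(R)$, with $\alpha$ the inclusion of $R$ in degree zero and $q=(1\otimes1)t$. The relation \eqref{eq:enveloping-universal-relation} must be checked; by Lemma~\ref{lem:idempotent-relations} it suffices to check two families.

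For $a\in A$, we need $aq=qa$, i.e. $a\otimes1=1\otimes a$ in $R\otimes_A R$, which holds.

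For $k\in K$, we need $qkq=0$. Here $qkq=(1\otimes k)t\cdot(1\otimes1)t=(P(k)\otimes1)t^2=0$.

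This yields $\Psi:\U_{R,P}\to\mathcal E_P(R)$. The composite $\Psi\Phi$ fixes $R$ and sends $(r\otimes s)t^n$ to $r\,q^n s$. An induction gives $q^n=(1\otimes1)t^n$, since $P(1)=1$, so $q^n=\bigl(1\cdot P(1\cdot1)\otimes1\bigr)t^n$. Hence $\Psi\Phi=\id$, and $\Phi$ is injective.

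The final uniqueness statement is then immediate from the direct sum decomposition. I expect the only delicate point to be the balancedness and associativity bookkeeping in Step 1, which rests entirely on $P$ being an $A$-bimodule map and on the averaging identity.
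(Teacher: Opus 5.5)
Your proposal follows essentially the paper's proof. It verifies $\mathcal E_P(R)$ using that $P$ is an $A$-bimodule map and that $P(suP(vw))=P(su)P(vw)$. It builds $\Psi$ from Proposition~\ref{prop:enveloping-universal}(b) with $q=(1_R\otimes1_R)t$ and the relations $aq=qa$, $qkq=0$. It then uses $q^n=(1_R\otimes1_R)t^n$ to get $\Psi\circ\Phi=\id$. The only difference is that you prove surjectivity of $\Phi$ by collapsing words, where the paper notes that $\Phi\circ\Psi$ fixes the generators $R$ and $Q$; both work.

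One slip needs fixing. Balancedness in the first factor (replacing $ra\otimes_A s$ by $r\otimes_A as$) is not trivial: it requires $raP(su)\otimes v=rP(asu)\otimes v$, i.e.\ left $A$-linearity $P(ax)=aP(x)$ for $a\in A$. The identity $P(sau)=P(s\,au)$ that you cite, twice, is instead your associativity check with a degree-zero middle factor.
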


\begin{proof}
We first verify that the multiplication on $\mathcal E_P(R)$ is
well-defined. The products involving the degree-zero part are the
usual left and right $R$-actions on $R\otimes_A R$.

For the product of two positive-degree elements, it remains to check
compatibility with the tensor product over $A$.
Let $a\in A$ and $r,s,u,v\in R$. Since $P$ is left
$A$-linear,
$$
 raP(su)\otimes_A v
 =rP(asu)\otimes_A v.
$$
Hence replacing $ra\otimes_A s$ by $r\otimes_A as$ in the first
factor does not change the product.
Similarly, since $P$ is right $A$-linear,
$$
 rP(sua)\otimes_A v
 =rP(su)a\otimes_A v
 =rP(su)\otimes_A av.
$$
Thus replacing $ua\otimes_A v$ by $u\otimes_A av$ in the second
factor also leaves the product unchanged. Therefore the multiplication
is well-defined.

We next prove associativity. By bilinearity, it is enough to consider
homogeneous elements represented by pure tensors.
Associativity in degree zero follows from associativity of $R$, and
the cases involving two degree-zero factors follow from the
$R$-bimodule structure of $R\otimes_A R$.

Let $r,s,u,v,x\in R$ and $i,j\geq1$. Then
\begin{align*}
 \bigl(x((r\otimes s)t^i)\bigr)((u\otimes v)t^j)
 &=(xrP(su)\otimes v)t^{i+j}=x\bigl(((r\otimes s)t^i)((u\otimes v)t^j)\bigr),\\
 \bigl(((r\otimes s)t^i)((u\otimes v)t^j)\bigr)x
 &=(rP(su)\otimes vx)t^{i+j}=((r\otimes s)t^i)\bigl(((u\otimes v)t^j)x\bigr),\\
 \bigl(((r\otimes s)t^i)x\bigr)((u\otimes v)t^j)
 &=(rP(sxu)\otimes v)t^{i+j}=((r\otimes s)t^i)\bigl(x((u\otimes v)t^j)\bigr).
\end{align*}
Hence associativity holds whenever exactly one factor has degree zero.

For $w,z\in R$ and $\ell\geq1$, we have
\begin{align*}
 \bigl(((r\otimes s)t^i)((u\otimes v)t^j)\bigr)((w\otimes z)t^\ell)
 &=(rP(su)P(vw)\otimes z)t^{i+j+\ell},\\
 ((r\otimes s)t^i)\bigl(((u\otimes v)t^j)((w\otimes z)t^\ell)\bigr)
 &=(rP(suP(vw))\otimes z)t^{i+j+\ell}.
\end{align*}
Since $P(vw)\in A$ and $P$ is right $A$-linear,
$P(suP(vw))=P(su)P(vw)$.
Thus these two expressions are equal.
Therefore $\mathcal E_P(R)$ is associative.
It is graded by construction, and $1_R$ is its identity.

Set $q:=(1_R\otimes1_R)t$.
For $a\in A$, the relation
$a\otimes_A1_R=1_R\otimes_Aa$ gives
$$
 aq=(a\otimes1_R)t=(1_R\otimes a)t=qa.
$$
For $k\in K$, we have
$$
 qkq=(P(k)\otimes1_R)t^2=0.
$$
Thus the inclusion $R\to\mathcal E_P(R)$ together with $Q\mapsto q$
satisfies the relations in
Lemma~\ref{lem:idempotent-relations}.
Hence it induces a unique unital algebra homomorphism
$\Psi:\U_{R,P}\longrightarrow\mathcal E_P(R)$
such that $\Psi(r)=r$ for $r\in R$ and $\Psi(Q)=q$.

We now define $\Phi$. For $a\in A$,
Lemma~\ref{lem:idempotent-relations} gives $aQ=Qa$, and therefore $raQ^ns=rQ^nas$.
Hence $(r\otimes s)t^n\mapsto rQ^ns$ is well-defined on
$R\otimes_A R$. Together with the identity on $R$, these maps define
a linear map
$\Phi:\mathcal E_P(R)\to\U_{R,P}$.

It remains to check multiplicativity. The degree-zero products and
the left and right $R$-actions are clearly preserved. For positive
degrees, Lemma~\ref{lem:idempotent-relations} gives $rQ^isuQ^jv=rP(su)Q^{i+j}v$,
which is precisely the image under $\Phi$ of
$$
 ((r\otimes s)t^i)((u\otimes v)t^j)
 =(rP(su)\otimes v)t^{i+j}.
$$
Thus $\Phi$ is a unital algebra homomorphism.

Since $P(1_R)=1_R$, the multiplication in $\mathcal E_P(R)$ gives
$q^n=(1_R\otimes1_R)t^n$ for all $n\geq1$.
Consequently,
 $rq^ns=(r\otimes s)t^n$.
Hence $\Psi\circ\Phi$ fixes $R$ and every homogeneous element
$(r\otimes s)t^n$, so it is the identity on $\mathcal E_P(R)$.
On the other hand, $\Phi\circ\Psi$ fixes $R$ and $Q$, which generate
$\U_{R,P}$, and therefore is the identity on $\U_{R,P}$.
Thus $\Phi$ and $\Psi$ are mutually inverse graded algebra
isomorphisms.

The final assertion follows from the direct-sum decomposition of
$\mathcal E_P(R)$.
\end{proof}

\begin{example}\label{ex:group-normal-form}
Let $P=P_G$ be the finite-group averaging operator from
Example~\ref{ex:group-averaging-algebra}. Then
$A=\im P=R^G$. Hence Theorem~\ref{thm:idempotent-normal-form} gives
$$
 \U_{R,P}\cong
 R\oplus\bigoplus_{n\geq1}(R\otimes_{R^G}R)t^n
$$
as graded algebras, where, for $i,j\geq1$,
$$
 ((r\otimes s)t^i)((u\otimes v)t^j)
 =
 \left(
 rP_G(su)\otimes v
 \right)t^{i+j}
 =
 \frac{1}{|G|}
 \sum_{g\in G}(r\,g(su)\otimes v)t^{i+j}.
$$
\end{example}

\section{Tensor products and Morita theory}\label{sec:tensor}

In this section, we introduce tensor products over a Reynolds algebra,
establish the Hom-tensor adjunction, and obtain Reynolds versions of the
Eilenberg-Watts theorem and Morita theory, with an application to matrix
Reynolds algebras.

\subsection{Tensor products over a Reynolds algebra}
\label{subsec:tensor-product}

The following definition is adapted from the Rota-Baxter tensor
product in~\cite{QiaoGaoGuo2019}.

\begin{definition}\label{def:reynolds-tensor}
Let $(R,P)$ be a Reynolds algebra of weight $\lambda$, let
$(M,p_M)$ be a right $(R,P)$-module, and let $(N,p_N)$ be a left
$(R,P)$-module. The \textbf{tensor product of $M$ and $N$ over
$(R,P)$} is the quotient
$$
 M\otimes_{(R,P)}N:=(M\otimes N)/J,
$$
where $M\otimes N$ is the usual tensor product and $J$ is the
submodule generated by
$$
 mr\otimes n-m\otimes rn,
 \quad
 p_M(m)\otimes n-m\otimes p_N(n)
$$
for all $m\in M$, $n\in N$, and $r\in R$.
We write $m\otimes_{(R,P)}n$ for the image of $m\otimes n$ in
this quotient.
\end{definition}

This tensor product has the following universal property.
Let $G$ be a module and let $f:M\times N\to G$ be a
bilinear map satisfying $f(mr,n)=f(m,rn)$ and
$f(p_M(m),n)=f(m,p_N(n))$ for all $m\in M$, $n\in N$,
and $r\in R$.
Then there exists a unique linear map
$\overline f:M\otimes_{(R,P)}N\to G$ such that
$\overline f(m\otimes_{(R,P)}n)=f(m,n)$.

\begin{theorem}\label{thm:tensor-enveloping}
There is a natural isomorphism
$$
 M\otimes_{(R,P)}N\cong M\otimes_{\U_{R,P}}N
$$
determined by
$m\otimes_{(R,P)}n\mapsto m\otimes_{\U_{R,P}}n$.
\end{theorem}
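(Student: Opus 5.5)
We regard $M$ as a right $\U_{R,P}$-module and $N$ as a left $\U_{R,P}$-module via Theorem~\ref{thm:category-equivalence}, so that $mQ=p_M(m)$ and $Qn=p_N(n)$. The plan is to compare the two tensor products by their universal properties. Both are quotients of the plain tensor product $M\otimes N$ over $\kfield$: the first by the submodule $J$ of Definition~\ref{def:reynolds-tensor}, the second by the submodule $J'$ generated by $mu\otimes n-m\otimes un$ for $m\in M$, $n\in N$ and $u\in\U_{R,P}$. It therefore suffices to show $J=J'$. The identity on $M\otimes N$ then descends to the isomorphism $m\otimes_{(R,P)}n\mapsto m\otimes_{\U_{R,P}}n$.

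The inclusion $J\subseteq J'$ is immediate. The generator $mr\otimes n-m\otimes rn$ is the case $u=r\in R$. The generator $p_M(m)\otimes n-m\otimes p_N(n)$ is $mQ\otimes n-m\otimes Qn$, which is the case $u=Q$.

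For $J'\subseteq J$, consider the set
\[
 S:=\{u\in\U_{R,P}\mid mu\otimes n-m\otimes un\in J\text{ for all }m\in M,\ n\in N\}.
\]
We show that $S$ is a $\kfield$-submodule containing $1$, $R$ and $Q$, and that it is closed under products. Since $\U_{R,P}$ is generated as an algebra by $R$ and $Q$, this forces $S=\U_{R,P}$.

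Linearity of $S$ is clear, and $1\in S$, $R\subseteq S$, $Q\in S$ hold by definition of $J$. For closure under products, let $u,v\in S$ and write
\[
 m(uv)\otimes n-m\otimes(uv)n
 =\bigl((mu)v\otimes n-mu\otimes vn\bigr)+\bigl(m u\otimes vn-m\otimes u(vn)\bigr).
\]
Here we use the module axioms $m(uv)=(mu)v$ and $(uv)n=u(vn)$. The first bracket lies in $J$ because $v\in S$, applied to the element $mu$. The second lies in $J$ because $u\in S$, applied to the element $vn$.

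This step is the only real content of the proof. It relies on the module structures being genuine $\U_{R,P}$-actions, which is what Theorem~\ref{thm:category-equivalence} supplies. In particular, the right-module convention of Definition~\ref{def:right-module}(b) is exactly what makes $M$ a right $\U_{R,P}$-module with $Q$ acting as $p_M$, so that no sign issue arises.

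Naturality in $M$ and $N$ is then routine. Morphisms of Reynolds modules are exactly $\U_{R,P}$-linear maps, and both induced maps on tensor products are given by $m\otimes n\mapsto f(m)\otimes g(n)$ on $M\otimes N$. The isomorphism, being the identity on representatives, commutes with them.
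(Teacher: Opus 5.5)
Your proposal is correct and is essentially the paper's argument: the paper also shows that the set of elements $u\in\U_{R,P}$ satisfying the balancing condition is a subalgebra containing $R$ and $Q$, hence all of $\U_{R,P}$. The only difference is packaging. The paper runs this argument for an arbitrary bilinear map $f$ and then compares universal properties, while you run it for the quotient map $M\otimes N\to(M\otimes N)/J$ and so show $J=J'$ directly.
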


\begin{proof}
By Theorem~\ref{thm:category-equivalence}, regard $M$ as a right
$\U_{R,P}$-module and $N$ as a left $\U_{R,P}$-module, with
$mQ=p_M(m)$ and $Qn=p_N(n)$.

Let $G$ be a module and let $f:M\times N\to G$ be a bilinear map
satisfying
$f(mr,n)=f(m,rn)$ and
$f(p_M(m),n)=f(m,p_N(n))$
for all $m\in M$, $n\in N$, and $r\in R$.
Set
$$
 S:=\{u\in\U_{R,P}\mid
 f(mu,n)=f(m,un)\text{ for all }m\in M,\ n\in N\}.
$$
The defining properties of $f$ imply that $R\subseteq S$ and
$Q\in S$. Moreover, $S$ is closed under addition and scalar
multiplication. If $u,v\in S$, then
$$
 f(muv,n)
 =f(mu,vn)
 =f(m,u(vn))
 =f(m,(uv)n),
$$
so $uv\in S$. Hence $S$ is a subalgebra of $\U_{R,P}$ containing
$R$ and $Q$. Since $\U_{R,P}$ is generated by $R$ and $Q$, we have
$S=\U_{R,P}$.

Thus every bilinear map satisfying the two relations in
Definition~\ref{def:reynolds-tensor} also satisfies
$f(mu,n)=f(m,un)$ for every $u\in\U_{R,P}$.
Conversely, if a bilinear map satisfies
$f(mu,n)=f(m,un)$ for every $u\in\U_{R,P}$, then taking $u\in R$
and $u=Q$ gives the two relations in
Definition~\ref{def:reynolds-tensor}.

Therefore the universal properties of
$M\otimes_{(R,P)}N$ and $M\otimes_{\U_{R,P}}N$ give an isomorphism
$$
 \theta_{M,N}:M\otimes_{(R,P)}N
 \longrightarrow M\otimes_{\U_{R,P}}N
$$
determined by
$$
 \theta_{M,N}(m\otimes_{(R,P)}n)
 =m\otimes_{\U_{R,P}}n.
$$

It remains to verify naturality. Let
$\alpha:M\to M'$ be a morphism of right $(R,P)$-modules and
$\beta:N\to N'$ a morphism of left $(R,P)$-modules.
For $m\in M$ and $n\in N$, we have
$$
 \theta_{M',N'}
 \bigl(\alpha(m)\otimes_{(R,P)}\beta(n)\bigr)
 =
 \alpha(m)\otimes_{\U_{R,P}}\beta(n),
$$
while
$$
 (\alpha\otimes\beta)
 \bigl(\theta_{M,N}(m\otimes_{(R,P)}n)\bigr)
 =
 \alpha(m)\otimes_{\U_{R,P}}\beta(n).
$$
Hence the corresponding square commutes. Since pure tensors generate
the tensor products, the isomorphisms $\theta_{M,N}$ are natural in
both $M$ and $N$.
\end{proof}

Consequently, for every right $(R,P)$-module $(M,p_M)$, the functor
$M\otimes_{(R,P)}-:{}_{(R,P)}\mathsf{Mod}\to\kfield\text{-}\mathsf{Mod}$
is additive and right exact. Similarly, for every left $(R,P)$-module
$(N,p_N)$, the functor
$-\otimes_{(R,P)}N:\mathsf{Mod}_{(R,P)}\to\kfield\text{-}\mathsf{Mod}$
is additive and right exact.

\subsection{Tensor functors and the Hom-tensor adjunction}
\label{subsec:adjunction}

Let $(S,T)$ and $(R,P)$ be Reynolds algebras, and let
$(B,p_B^S,p_B^R)$ be an $(S,T)$-$(R,P)$-bimodule.
We first record the tensor and Hom constructions associated with $B$.

\begin{proposition}\label{prop:change-scalars}
For every left $(R,P)$-module $(N,p_N)$, the tensor product
$B\otimes_{(R,P)}N$ is a left $(S,T)$-module with
$$
 s(b\otimes n)=(sb)\otimes n,
 \quad
 p(b\otimes n)=p_B^S(b)\otimes n.
$$
For fixed $B$, this defines a functor
$$
 B\otimes_{(R,P)}-:
 {}_{(R,P)}\mathsf{Mod}\longrightarrow{}_{(S,T)}\mathsf{Mod}.
$$
\end{proposition}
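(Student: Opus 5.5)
Two routes are available: a direct verification, or a transfer through Theorems~\ref{thm:category-equivalence} and~\ref{thm:tensor-enveloping}. I will do the direct verification, because the formulas in the statement must be checked anyway, and use the transfer viewpoint only as a cross-check.

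\emph{Well-definedness.} Fix $s\in S$. The map $(b,n)\mapsto (sb)\otimes_{(R,P)}n$ is bilinear, and I will check that it respects both families of generators of $J$. The first relation holds because $s(br)=(sb)r$ in the $S$-$R$-bimodule $B$. For the second, I need
$$p_B^R(sb)\otimes n=(sb)\otimes p_N(n).$$
The left side is $s\,p_B^R(b)\otimes n$ by Definition~\ref{def:bimodule}(iv), and this equals $sb\otimes p_N(n)$ once we know the action of $s$ is compatible with the relation $p_B^R(b)\otimes n=b\otimes p_N(n)$. To make that compatibility precise, apply the universal property directly to the bilinear map $f(b,n)=(sb)\otimes n$ and check
$$f(p_B^R(b),n)=s\,p_B^R(b)\otimes n=p_B^R(sb)\otimes n=sb\otimes p_N(n)=f(b,p_N(n)).$$
The same argument works for $p(b\otimes n)=p_B^S(b)\otimes n$, using $p_B^S(br)=p_B^S(b)r$ and $p_B^S p_B^R=p_B^R p_B^S$. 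The universal property then gives well-defined linear maps $s\cdot-$ and $p$ on $B\otimes_{(R,P)}N$.

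\emph{Module axioms.} Associativity and unitality of the $S$-action follow from those on $B$. The identity \eqref{eq:left-module} for $(S,T)$ with operator $p$ holds on pure tensors $b\otimes n$ because the identity holds for $(B,p_B^S)$ in the first factor and every term is of the form $(\text{something in }B)\otimes n$. Since pure tensors span, it holds on all of $B\otimes_{(R,P)}N$. Note that the weight $\mu$ of $T$ enters only through the left structure of $B$.

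\emph{Functoriality and cross-check.} For a morphism $g:N\to N'$, the map $\id_B\otimes g$ is well defined on the quotient, because $g$ is $R$-linear and commutes with the operators. It is $S$-linear and commutes with $p$ on pure tensors, and compositions and identities are preserved. As a cross-check via Theorems~\ref{thm:category-equivalence} and~\ref{thm:tensor-enveloping}: $B$ is a $\U_{S,T}$-$\U_{R,P}$-bimodule, so $B\otimes_{\U_{R,P}}N$ is a left $\U_{S,T}$-module. Transporting this structure through the isomorphism $\theta$ gives exactly the formulas in the statement, since $Q_S$ acts by $p_B^S$ on the first factor.

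I expect the main obstacle to be well-definedness with respect to the operator relation. This is precisely where the bimodule compatibilities $p_B^R(sb)=s\,p_B^R(b)$ and $p_B^Sp_B^R=p_B^Rp_B^S$ are needed.
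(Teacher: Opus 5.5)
Your argument is correct. Your main route differs from the paper's, whose proof is essentially your ``cross-check.''

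The paper regards $B$ as a $\U_{S,T}$-$\U_{R,P}$-bimodule via Theorem~\ref{thm:category-equivalence}\textup{(c)}. It identifies $B\otimes_{(R,P)}N$ with $B\otimes_{\U_{R,P}}N$ by Theorem~\ref{thm:tensor-enveloping}. It then takes the standard left action $u(b\otimes n)=(ub)\otimes n$ and reads off the two formulas by restricting to $S$ and to the operator generator of $\U_{S,T}$. Functoriality is just the $\U_{S,T}$-linearity of $\id_B\otimes f$.

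The paper's route has three advantages:
\begin{itemize}
\item Well-definedness reduces to ordinary ring theory.
\item The compatibilities of Definition~\ref{def:bimodule}\textup{(iv)} are used only once, inside Theorem~\ref{thm:category-equivalence}\textup{(c)}, to make the two actions commute.
\item The weight-$\mu$ Reynolds identity for $p$ comes for free, since every left $\U_{S,T}$-module is a left $(S,T)$-module. Compatibility with $\theta_{B,N}$, which later results such as Theorem~\ref{thm:hom-tensor} rely on, is also built in.
\end{itemize}

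Your direct verification through the universal property of $M\otimes_{(R,P)}N$ is self-contained and does not need $\U_{R,P}$ at all. It also shows exactly which axiom is used where:
\begin{itemize}
\item $p_B^R(sb)=s\,p_B^R(b)$ makes the $S$-action well defined.
\item $p_B^S(br)=p_B^S(b)r$ and $p_B^Sp_B^R=p_B^Rp_B^S$ make $p$ well defined.
\item The weight-$\mu$ identity holds on pure tensors because every term has the form $(\cdot)\otimes n$, and it extends by linearity.
\end{itemize}
All your checks go through as written. In particular, $s\,p_B^R(b)\otimes n=p_B^R(sb)\otimes n=sb\otimes p_N(n)$ is not circular, since the second equality is just the defining relation of $J$ applied to the element $sb\in B$. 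The only cost of your route is length.
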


\begin{proof}
By Theorem~\ref{thm:category-equivalence}, $B$ is a
$\U_{S,T}$-$\U_{R,P}$-bimodule and $N$ is a left
$\U_{R,P}$-module. By Theorem~\ref{thm:tensor-enveloping},
$B\otimes_{(R,P)}N$ is naturally identified with
$B\otimes_{\U_{R,P}}N$.
The latter is a left $\U_{S,T}$-module via
$u(b\otimes n)=(ub)\otimes n$.
Restricting this action to $S$ and to the operator generator of
$\U_{S,T}$ gives the stated formulas.

If $f:N\to N'$ is a morphism of left $(R,P)$-modules, then
$\id_B\otimes f$ is $\U_{S,T}$-linear and hence a morphism of left
$(S,T)$-modules. Thus the construction is functorial.
\end{proof}

Let $(L,p_L)$ be a left $(S,T)$-module.

\begin{lemma}\label{lem:hom-reynolds-module}
The module $\Hom_{(S,T)}(B,L)$ is a left $(R,P)$-module with
$$
 (r\cdot f)(b)=f(br),
 \quad
 p(f)(b)=f(p_B^R(b))
$$
for $r\in R$, $b\in B$, and $f\in\Hom_{(S,T)}(B,L)$.
\end{lemma}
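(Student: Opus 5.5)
The quickest route is through Theorem~\ref{thm:category-equivalence}. By part~\textup{(c)}, $B$ is a $\U_{S,T}$-$\U_{R,P}$-bimodule with $Q_Sb=p_B^S(b)$ and $bQ=p_B^R(b)$. By part~\textup{(a)}, $L$ is a left $\U_{S,T}$-module. The same theorem shows that $\Hom_{(S,T)}(B,L)=\Hom_{\U_{S,T}}(B,L)$ as $\kfield$-modules, since a map is $\U_{S,T}$-linear exactly when it is $S$-linear and commutes with the operators. Standard ring theory then makes $\Hom_{\U_{S,T}}(B,L)$ a left $\U_{R,P}$-module via $(u\cdot f)(b)=f(bu)$. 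Applying Theorem~\ref{thm:category-equivalence}\textup{(a)} again converts this into a left $(R,P)$-module. Restricting the action to $r\in R$ and to $u=Q$ gives exactly $(r\cdot f)(b)=f(br)$ and $p(f)(b)=f(bQ)=f(p_B^R(b))$.

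As a consistency check, and as a direct alternative, I would also outline the elementwise verification.

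First, $r\cdot f$ and $p(f)$ must again lie in $\Hom_{(S,T)}(B,L)$. For $S$-linearity we have $f((sb)r)=f(s(br))=sf(br)$ and $f(p_B^R(sb))=f(sp_B^R(b))=sf(p_B^R(b))$. Commuting with the operators uses Definition~\ref{def:bimodule}\textup{(iv)}:
\[
f\bigl(p_B^S(b)r\bigr)=f\bigl(p_B^S(br)\bigr)=p_L\bigl(f(br)\bigr),
\qquad
f\bigl(p_B^R(p_B^S(b))\bigr)=f\bigl(p_B^S(p_B^R(b))\bigr)=p_L\bigl(f(p_B^R(b))\bigr).
\]
Second, the formula $(r\cdot f)(b)=f(br)$ gives a unital left $R$-module structure because $B$ is a right $R$-module: $(rr')\cdot f$ evaluates to $f(brr')=(r\cdot(r'\cdot f))(b)$.

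Third comes the Reynolds identity \eqref{eq:left-module}. Evaluate both sides at $b$. The left side is $(P(r)\cdot p(f))(b)=f(p_B^R(bP(r)))$. The right side expands to
\[
f\bigl(p_B^R(p_B^R(b)r)\bigr)+f\bigl(p_B^R(b)P(r)\bigr)+\lambda f\bigl(p_B^R(p_B^R(b)P(r))\bigr).
\]
These agree after applying $f$ to the right module identity \eqref{eq:right-module} for $(B,p_B^R)$. This is the one point that needs care: the order of composition reverses when passing to $\Hom$, and the paper's sign convention for right modules in Definition~\ref{def:right-module}\textup{(b)} is precisely what makes the terms match without a sign change.

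The main obstacle is therefore only bookkeeping, namely tracking which term of \eqref{eq:right-module} corresponds to which term of \eqref{eq:left-module}. The operator-ring argument in the first paragraph avoids this entirely, so I would present that as the proof and keep the direct check as a remark.
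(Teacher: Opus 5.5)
Your proposal is correct and matches the paper's proof. The paper likewise identifies $\Hom_{(S,T)}(B,L)$ with $\Hom_{\U_{S,T}}(B,L)$, uses the standard left $\U_{R,P}$-action $(u\cdot f)(b)=f(bu)$, and reads off the two formulas by taking $u=r\in R$ and $u=Q$. Your elementwise check is also correct: the terms of Eq.~(\ref{eq:left-module}) for the Hom module are exactly $f$ applied to Eq.~(\ref{eq:right-module}), so it is a sound supplementary remark.
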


\begin{proof}
Under Theorem~\ref{thm:category-equivalence},
$\Hom_{(S,T)}(B,L)=\Hom_{\U_{S,T}}(B,L)$.
The usual left $\U_{R,P}$-action on this Hom module is
$$
 (u\cdot f)(b)=f(bu),
 \quad u\in\U_{R,P}.
$$
Restricting to $u=r\in R$ gives $(r\cdot f)(b)=f(br)$, while taking
$u=Q$ gives
$p(f)(b)=f(bQ)=f(p_B^R(b))$.
The result follows from Theorem~\ref{thm:category-equivalence}.
\end{proof}

\begin{theorem}\label{thm:hom-tensor}
For every left $(R,P)$-module $(N,p_N)$ and every left
$(S,T)$-module $(L,p_L)$, there is a natural isomorphism
$$
 \Hom_{(S,T)}\bigl(B\otimes_{(R,P)}N,L\bigr)
 \cong
 \Hom_{(R,P)}
 \bigl(N,\Hom_{(S,T)}(B,L)\bigr).
$$
\end{theorem}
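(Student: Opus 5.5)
The plan is to transport the statement to ordinary modules over the rings of Reynolds operators and then invoke the classical Hom-tensor adjunction. By Theorem~\ref{thm:category-equivalence}, the bimodule $(B,p_B^S,p_B^R)$ is a $\U_{S,T}$-$\U_{R,P}$-bimodule, $N$ is a left $\U_{R,P}$-module, and $L$ is a left $\U_{S,T}$-module. Throughout, the module operators are given by the actions of the respective operator generators. The same theorem identifies morphism sets: $\Hom_{(S,T)}(X,Y)=\Hom_{\U_{S,T}}(X,Y)$ and $\Hom_{(R,P)}(X,Y)=\Hom_{\U_{R,P}}(X,Y)$ as subsets of the underlying maps.

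Next I would identify the two sides. By Theorem~\ref{thm:tensor-enveloping} and Proposition~\ref{prop:change-scalars}, $B\otimes_{(R,P)}N\cong B\otimes_{\U_{R,P}}N$ as left $\U_{S,T}$-modules. The isomorphism $\theta_{B,N}$ commutes with the $S$-action and with $p_B^S\otimes\id$, hence with the whole $\U_{S,T}$-action. Therefore
\[
\Hom_{(S,T)}(B\otimes_{(R,P)}N,L)\cong\Hom_{\U_{S,T}}(B\otimes_{\U_{R,P}}N,L).
\]
By Lemma~\ref{lem:hom-reynolds-module}, $\Hom_{(S,T)}(B,L)$ with the stated $(R,P)$-structure is exactly the left $\U_{R,P}$-module $\Hom_{\U_{S,T}}(B,L)$, with action $(uf)(b)=f(bu)$. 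Hence
\[
\Hom_{(R,P)}(N,\Hom_{(S,T)}(B,L))=\Hom_{\U_{R,P}}(N,\Hom_{\U_{S,T}}(B,L)).
\]

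The classical adjunction for the ring bimodule ${}_{\U_{S,T}}B_{\U_{R,P}}$ now gives a natural isomorphism between these two sets. It is given by $\varphi\mapsto(n\mapsto(b\mapsto\varphi(b\otimes n)))$. Composing the identifications yields the asserted isomorphism, which in Reynolds terms has the same explicit formula.

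The only step needing care is naturality: it must hold for morphisms in ${}_{(R,P)}\mathsf{Mod}$ and ${}_{(S,T)}\mathsf{Mod}$, not merely for $\U$-module maps. Since the equivalences of Theorem~\ref{thm:category-equivalence} act as the identity on underlying maps, these are the same morphisms. I would also need $\theta_{B,N}$ to be natural in $N$, which is part of Theorem~\ref{thm:tensor-enveloping}. The classical adjunction is natural in $N$ and $L$, so the composite is natural as well.

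As a sanity check, one can verify the operator compatibility directly. Suppose $\varphi$ commutes with the operators, so $\varphi(p_B^S(b)\otimes n)=p_L\varphi(b\otimes n)$. Then the map $n\mapsto\varphi(-\otimes n)$ sends $p_N(n)$ to $\varphi(-\otimes p_N(n))=\varphi(p_B^R(-)\otimes n)$, which is $p$ applied to the image of $n$. This is exactly the compatibility required, and it confirms that the identifications of Lemma~\ref{lem:hom-reynolds-module} match the tensor relation $p_M(m)\otimes n=m\otimes p_N(n)$.
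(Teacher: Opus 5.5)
Your proposal is correct and follows essentially the same route as the paper: you transport both sides to $\U_{S,T}$- and $\U_{R,P}$-modules via Theorems~\ref{thm:category-equivalence} and~\ref{thm:tensor-enveloping} and Lemma~\ref{lem:hom-reynolds-module}, then apply the classical Hom-tensor adjunction for ${}_{\U_{S,T}}B_{\U_{R,P}}$, with the same explicit formula $\varphi\mapsto(n\mapsto\varphi(-\otimes n))$. One small correction to your sanity check: the identity $\varphi(-\otimes p_N(n))=\varphi(p_B^R(-)\otimes n)$ uses only the tensor relation, whereas the hypothesis $\varphi(p_B^S(b)\otimes n)=p_L\varphi(b\otimes n)$ is what makes each $\varphi(-\otimes n)$ lie in $\Hom_{(S,T)}(B,L)$.
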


\begin{proof}
By Theorems~\ref{thm:category-equivalence} and
\ref{thm:tensor-enveloping}, the left-hand side is naturally identified
with $\Hom_{\U_{S,T}}
 \bigl(B\otimes_{\U_{R,P}}N,L\bigr)$,
while Lemma~\ref{lem:hom-reynolds-module} identifies the right-hand
side with $\Hom_{\U_{R,P}}
 \bigl(N,\Hom_{\U_{S,T}}(B,L)\bigr)$.
The usual Hom-tensor adjunction for the
$\U_{S,T}$-$\U_{R,P}$-bimodule $B$ gives a natural isomorphism
between these two modules.

Explicitly, a morphism
$F:B\otimes_{(R,P)}N\to L$ is sent to
$\Phi(F):N\to\Hom_{(S,T)}(B,L)$ defined by $\Phi(F)(n)(b)=F(b\otimes n)$.
Conversely, a morphism
$G:N\to\Hom_{(S,T)}(B,L)$ determines
$\Psi(G):B\otimes_{(R,P)}N\to L$ by $\Psi(G)(b\otimes n)=G(n)(b)$.
These two assignments are mutually inverse and, being the usual
Hom-tensor adjunction under the above identifications, are natural in
$N$ and $L$.
\end{proof}

\subsection{Eilenberg-Watts and Morita theory}
\label{subsec:morita}

We first apply the classical Eilenberg-Watts theorem
\cite{Eilenberg1961,Watts1960} to Reynolds module categories.

\begin{theorem}\label{thm:reynolds-watts}
Let $(R,P)$ and $(S,T)$ be Reynolds algebras, and let
$F:{}_{(R,P)}\mathsf{Mod}\to{}_{(S,T)}\mathsf{Mod}$ be an additive
linear functor. Then the following conditions are equivalent:
\begin{enumerate}[label=\textup{(\roman*)}]
\item $F$ is right exact and preserves arbitrary direct sums.

\item there is an $(S,T)$-$(R,P)$-bimodule $B$ such that
$F$ is naturally isomorphic to $B\otimes_{(R,P)}-$.
\end{enumerate}
In this case, one may take $B=F(\U_{R,P})$, where $\U_{R,P}$ is
regarded as a left $(R,P)$-module via
Theorem~\ref{thm:category-equivalence}. The representing bimodule is
unique up to isomorphism.
\end{theorem}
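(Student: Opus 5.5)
The plan is to transport the statement along the equivalences of Theorem~\ref{thm:category-equivalence} and then invoke the classical Eilenberg--Watts theorem for the rings $\U_{R,P}$ and $\U_{S,T}$. Write $E_R:{}_{(R,P)}\mathsf{Mod}\to\U_{R,P}\text{-}\mathsf{Mod}$ and $E_S$ for these equivalences. Both are identity on underlying $\kfield$-modules and on maps. They are therefore $\kfield$-linear isomorphisms of categories, and they preserve and reflect exact sequences, cokernels and arbitrary direct sums. For the last point, note that direct sums of Reynolds modules carry the componentwise operator, which corresponds to the direct sum of $\U$-modules.

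Given $F$, set $F':=E_S\circ F\circ E_R^{-1}:\U_{R,P}\text{-}\mathsf{Mod}\to\U_{S,T}\text{-}\mathsf{Mod}$. This is an additive linear functor. Moreover, $F'$ is right exact and preserves direct sums if and only if $F$ is.

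\textbf{(i)$\Rightarrow$(ii).} The classical Eilenberg--Watts theorem \cite{Eilenberg1961,Watts1960} applies to $F'$. It gives the $\U_{S,T}$-$\U_{R,P}$-bimodule $B:=F'(\U_{R,P})$ and a natural isomorphism $F'\cong B\otimes_{\U_{R,P}}-$. Here the right $\U_{R,P}$-action on $B$ comes from applying $F'$ to the right multiplications $\U_{R,P}\to\U_{R,P}$, which are left-module endomorphisms; linearity of $F$ is what makes this a $\kfield$-bimodule action. By Theorem~\ref{thm:category-equivalence}(c), $B$ is an $(S,T)$-$(R,P)$-bimodule, and as a left $(S,T)$-module it is $F(\U_{R,P})$. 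Theorem~\ref{thm:tensor-enveloping} identifies $B\otimes_{\U_{R,P}}N$ with $B\otimes_{(R,P)}N$ naturally in $N$. We must also check that this identification respects the left $(S,T)$-structures, i.e.\ that it matches Proposition~\ref{prop:change-scalars}. That holds because the isomorphism sends $b\otimes n\mapsto b\otimes n$, and the $\U_{S,T}$-action on both sides is through the first factor. Composing with $E_S^{-1}$ gives $F\cong B\otimes_{(R,P)}-$.

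\textbf{(ii)$\Rightarrow$(i).} By Theorem~\ref{thm:tensor-enveloping} and Proposition~\ref{prop:change-scalars}, $B\otimes_{(R,P)}-$ corresponds to $B\otimes_{\U_{R,P}}-$. That functor is right exact and commutes with direct sums, and these properties transfer back along the equivalences.

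\textbf{Uniqueness.} If $B\otimes_{(R,P)}-\cong B'\otimes_{(R,P)}-$, evaluate at $\U_{R,P}$. This gives $B\cong B\otimes_{\U_{R,P}}\U_{R,P}\cong B'\otimes_{\U_{R,P}}\U_{R,P}\cong B'$ as left $\U_{S,T}$-modules. Naturality with respect to the right multiplication endomorphisms of $\U_{R,P}$ shows that the isomorphism is also right $\U_{R,P}$-linear. It is thus a bimodule isomorphism, hence an isomorphism of $(S,T)$-$(R,P)$-bimodules by Theorem~\ref{thm:category-equivalence}(c).

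The main obstacle I expect is bookkeeping rather than mathematics. One must verify that the bimodule structure produced by Eilenberg--Watts, namely the right action via $F(\rho_u)$, corresponds under Theorem~\ref{thm:category-equivalence}(c) to operators $p_B^R=F(\rho_Q)$ and the right $R$-action $F(\rho_r)$. One must also verify that the isomorphism of Theorem~\ref{thm:tensor-enveloping} is compatible with the induced $(S,T)$-module structures. Both checks reduce to the fact that all identifications are identities on underlying sets and elements.
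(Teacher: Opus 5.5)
Your proposal is correct and follows essentially the same route as the paper. You transport $F$ along the equivalences of Theorem~\ref{thm:category-equivalence}, apply the classical Eilenberg--Watts theorem to obtain $B=F(\U_{R,P})$ with right action $F(\rho_u)$, identify $B$ as an $(S,T)$-$(R,P)$-bimodule via part~\textup{(c)}, and pass between $B\otimes_{\U_{R,P}}-$ and $B\otimes_{(R,P)}-$ using Theorem~\ref{thm:tensor-enveloping}. The only difference is in uniqueness: you prove it directly, by evaluating at $\U_{R,P}$ and using naturality with respect to right multiplications, whereas the paper just cites the uniqueness part of the classical theorem.
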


\begin{proof}
By Theorem~\ref{thm:category-equivalence}, $F$ may be regarded as an
additive linear functor
$F:\U_{R,P}\text{-}\mathsf{Mod}\to \U_{S,T}\text{-}\mathsf{Mod}$.

Suppose that \textup{(i)} holds. By the classical
Eilenberg-Watts theorem, $B:=F(\U_{R,P})$ carries a natural
$\U_{S,T}$-$\U_{R,P}$-bimodule structure and there is a natural isomorphism $B\otimes_{\U_{R,P}}-\cong F$.
Explicitly, if $\rho_u:\U_{R,P}\to \U_{R,P}$ denotes right multiplication by
$u\in \U_{R,P}$, then the right $\U_{R,P}$-action on $B$ is given by
$bu=F(\rho_u)(b)$.
Since $F$ is linear, the left and right scalar actions on $B$
agree. Hence Theorem~\ref{thm:category-equivalence}\textup{(c)}
identifies $B$ with an $(S,T)$-$(R,P)$-bimodule.
By Theorem~\ref{thm:tensor-enveloping}, the above isomorphism becomes $B\otimes_{(R,P)}-\cong F$.
Thus \textup{(ii)} holds.

Conversely, if \textup{(ii)} holds, then
Theorem~\ref{thm:tensor-enveloping} identifies
$B\otimes_{(R,P)}-$ with the ordinary tensor functor
$B\otimes_{\U_{R,P}}-$, which is right exact and preserves arbitrary direct
sums. Hence \textup{(i)} holds.

The uniqueness of $B$ follows from the uniqueness statement in the
classical Eilenberg-Watts theorem.
\end{proof}

A left $(R,P)$-module $(G,p_G)$ is called a \textbf{generator} if
every left $(R,P)$-module is a quotient of a direct sum of copies of
$(G,p_G)$. It is \textbf{finitely generated} if it is a quotient of
a free left $(R,P)$-module on a finite set. A finitely generated
projective generator is called a \textbf{progenerator}, where
projectivity is defined in Definition~\ref{def:projective-injective}.
By Theorem~\ref{thm:category-equivalence} and
Corollary~\ref{cor:free-enveloping}, these notions agree with the
corresponding notions for modules over $\U_{R,P}$.

\begin{theorem}\label{thm:morita}
Let $(R,P)$ and $(S,T)$ be Reynolds algebras, and let
$(B,p_B^S,p_B^R)$ be an $(S,T)$-$(R,P)$-bimodule.
The functor
$$
 B\otimes_{(R,P)}-:
 {}_{(R,P)}\mathsf{Mod}\longrightarrow{}_{(S,T)}\mathsf{Mod}
$$
is an equivalence if and only if:
\begin{enumerate}[label=\textup{(\roman*)}]
\item $(B,p_B^S)$ is a progenerator as a left $(S,T)$-module.

\item the right $\U_{R,P}$-action on $B$ induces an algebra
isomorphism
$$
 \U_{R,P}^{\mathrm{op}}
 \longrightarrow
 \operatorname{End}_{(S,T)}(B),
 \quad
 u\longmapsto(b\mapsto bu).
$$
\end{enumerate}
When these conditions hold,
$\Hom_{(S,T)}(B,-)$ is a quasi-inverse.
\end{theorem}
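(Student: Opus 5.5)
The plan is to transport the statement to ordinary module categories over $\U_{R,P}$ and $\U_{S,T}$ and then invoke the classical Morita theorem. By Theorem~\ref{thm:category-equivalence}(a) and (c), the category ${}_{(R,P)}\mathsf{Mod}$ is identified with $\U_{R,P}\text{-}\mathsf{Mod}$, the category ${}_{(S,T)}\mathsf{Mod}$ with $\U_{S,T}\text{-}\mathsf{Mod}$, and the bimodule $(B,p_B^S,p_B^R)$ with a $\U_{S,T}$-$\U_{R,P}$-bimodule $B$. These identifications leave underlying modules and maps unchanged. By Theorem~\ref{thm:tensor-enveloping}, which is natural in the left variable, and by Proposition~\ref{prop:change-scalars}, the functor $B\otimes_{(R,P)}-$ is naturally isomorphic to the composite of these identifications with $B\otimes_{\U_{R,P}}-:\U_{R,P}\text{-}\mathsf{Mod}\to\U_{S,T}\text{-}\mathsf{Mod}$. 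Since the identifications are isomorphisms of categories, one functor is an equivalence if and only if the other is.

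Next I would translate conditions (i) and (ii). For (i), the remark preceding the theorem states that generators, finitely generated modules and projective modules in the Reynolds sense agree with the corresponding notions over $\U_{S,T}$. This uses Corollary~\ref{cor:free-enveloping} for finite generation, and the definition of projectivity, which is stated categorically and hence is preserved by the isomorphism of categories. So (i) holds if and only if $B$ is a progenerator in $\U_{S,T}\text{-}\mathsf{Mod}$. For (ii), the identification gives $\operatorname{End}_{(S,T)}(B)=\operatorname{End}_{\U_{S,T}}(B)$ as algebras. The right action map $u\mapsto(b\mapsto bu)$ is the standard homomorphism $\U_{R,P}^{\mathrm{op}}\to\operatorname{End}_{\U_{S,T}}(B)$, with composition written in the convention that makes it a homomorphism from the opposite ring. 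Condition (ii) is therefore exactly the classical requirement that $B$ be balanced with $\U_{R,P}$ recovered as the opposite of its endomorphism ring.

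I would then apply the classical Morita theorem. For rings $U$ and $V$ and a $V$-$U$-bimodule $B$, the functor $B\otimes_U-$ is an equivalence if and only if ${}_VB$ is a progenerator and $U^{\mathrm{op}}\to\operatorname{End}_V(B)$ is an isomorphism. In that case $\Hom_V(B,-)$ is a quasi-inverse, since it is right adjoint to $B\otimes_U-$.

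Transporting back, Theorem~\ref{thm:hom-tensor} and Lemma~\ref{lem:hom-reynolds-module} identify $\Hom_{(S,T)}(B,-)$ with $\Hom_{\U_{S,T}}(B,-)$, equipped with its induced left $(R,P)$-structure. This gives the quasi-inverse claim. The main point needing care is the direction of the classical statement. One must use the version phrased with the tensor functor, not merely ``the module categories are equivalent''. The justification is that an equivalence $F=B\otimes_U-$ sends the progenerator $U$ to $B$, giving (i). It also induces $U^{\mathrm{op}}\cong\operatorname{End}_U(U)\cong\operatorname{End}_V(B)$, and one checks that this isomorphism is precisely $u\mapsto F(\rho_u)$, i.e.\ the right action, giving (ii). The converse is the standard construction of the inverse equivalence, obtained from the dual basis of $B$ and the adjunction.
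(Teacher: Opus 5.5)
Your proposal is correct and follows the paper's route: identify $B\otimes_{(R,P)}-$ with $B\otimes_{\U_{R,P}}-$ via Theorems~\ref{thm:category-equivalence} and~\ref{thm:tensor-enveloping}, apply the classical Morita theorem, and translate conditions (i), (ii) and the quasi-inverse back through Theorem~\ref{thm:category-equivalence} and Lemma~\ref{lem:hom-reynolds-module}. Your extra check, that the induced isomorphism $\U_{R,P}^{\mathrm{op}}\cong\operatorname{End}_{\U_{S,T}}(B)$ is exactly the right-action map, only spells out a detail the paper leaves to the classical reference.
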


\begin{proof}
By Theorem~\ref{thm:category-equivalence}\textup{(c)}, $B$ may be
regarded as a $\U_{S,T}$-$\U_{R,P}$-bimodule, and
Theorem~\ref{thm:tensor-enveloping} identifies
$B\otimes_{(R,P)}-$ with $B\otimes_{\U_{R,P}}-$.

By the classical Morita theorem \cite{Morita1958,Lam1999},
$B\otimes_{\U_{R,P}}-$ is an equivalence if and only if $B$ is a
progenerator as a left $\U_{S,T}$-module and the right action induces an
isomorphism $\U_{R,P}^{\mathrm{op}}\cong\operatorname{End}_{\U_{S,T}}(B)$.
Under Theorem~\ref{thm:category-equivalence}, these two conditions are
precisely \textup{(i)} and \textup{(ii)}.

When they hold, the classical quasi-inverse
$\Hom_{\U_{S,T}}(B,-)$ corresponds, by
Lemma~\ref{lem:hom-reynolds-module}, to
$\Hom_{(S,T)}(B,-)$.
\end{proof}

We next give a concrete compatibility between the ring of Reynolds
operators and matrix algebras.
For $n\geq1$, let
$P_n:M_n(R)\to M_n(R)$ be the entrywise operator, let $I_n$ be the
identity matrix, and let $Q_n$ denote the operator generator of
$\U_{M_n(R),P_n}$.

\begin{proposition}\label{prop:matrix-envelope}
Let $(R,P)$ be a Reynolds algebra of weight $-1$ such that
$P(1_R)=1_R$. For every $n\geq1$, there is a canonical unital
algebra isomorphism
$$
 \U_{M_n(R),P_n}\longrightarrow M_n(\U_{R,P})
$$
that is the identity on $M_n(R)$ and sends $Q_n$ to $QI_n$.
\end{proposition}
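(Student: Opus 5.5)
The plan is to construct algebra homomorphisms in both directions using universal properties, and then check that the two composites fix generators.

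\emph{Forward map.} I apply Proposition~\ref{prop:enveloping-universal}\textup{(b)} to $(M_n(R),P_n)$. I take $\alpha:M_n(R)\to M_n(\U_{R,P})$ to be the entrywise inclusion, which is unital by Proposition~\ref{prop:enveloping-universal}\textup{(a)}, and put $q=QI_n$. Since $QI_n$ is scalar, for $X=(x_{ij})$ the $(i,j)$ entry of
$$
\alpha(P_n(X))q-q\alpha(P_n(X))-q\alpha(X-P_n(X))q
$$
is $P(x_{ij})Q-QP(x_{ij})-Q(x_{ij}-P(x_{ij}))Q$. This vanishes by Eq.~(\ref{eq:enveloping-compact}) at weight $-1$. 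This gives a unital homomorphism $\Theta:\U_{M_n(R),P_n}\to M_n(\U_{R,P})$ that is the identity on $M_n(R)$ and sends $Q_n$ to $QI_n$. This step does not use $P(1_R)=1_R$.

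\emph{Backward map.} This is the main step, and it is where $P(1_R)=1_R$ enters. Let $e_{ij}\in M_n(R)\subseteq\U:=\U_{M_n(R),P_n}$ be the matrix units. Since $P(1_R)=1_R$ and $P(0)=0$, we have $P_n(e_{ij})=e_{ij}$. The defining relation for $X=e_{ij}$ then reduces to $e_{ij}Q_n=Q_ne_{ij}$, so $Q_n$ commutes with all matrix units. The $e_{ij}$ form a full set of matrix units in $\U$ summing to $1$. By the standard matrix-unit argument, $\U\cong M_n(C)$ with $C=e_{11}\U e_{11}$, via $u\mapsto(e_{1i}ue_{j1})_{ij}$. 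Next I define a unital homomorphism $\beta:R\to C$ by $r\mapsto re_{11}$, and put $c=e_{11}Q_ne_{11}=Q_ne_{11}$. For $X=re_{11}$ we have $P_n(X)=P(r)e_{11}$. Multiplying the defining relation for $X$ on both sides by $e_{11}$, and using that $e_{11}$ commutes with $Q_n$, gives
$$
\beta(P(r))c-c\beta(P(r))-c\beta(r-P(r))c=0
$$
in $C$. Proposition~\ref{prop:enveloping-universal}\textup{(b)} therefore yields $\U_{R,P}\to C$. Applying this entrywise and composing with $M_n(C)\cong\U$ gives $\Xi:M_n(\U_{R,P})\to\U$. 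Explicitly, $\Xi(ue_{ij})=e_{i1}\widetilde\beta(u)e_{1j}$, so $\Xi$ sends $re_{ij}$ to $re_{ij}$ and $QI_n$ to $\sum_i e_{i1}Q_ne_{11}e_{1i}=Q_n$.

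\emph{Checking the composites.} $\Xi\circ\Theta$ fixes $M_n(R)$ and $Q_n$, which generate $\U$, so it is the identity. $\Theta\circ\Xi$ fixes every $re_{ij}$ and $QI_n$. The elements $e_{ij}$, $re_{11}$ and $Qe_{11}=(QI_n)e_{11}$ generate $M_n(\U_{R,P})$, because $ue_{ij}=e_{i1}(ue_{11})e_{1j}$ and $u\mapsto ue_{11}$ is multiplicative. Hence $\Theta\circ\Xi$ is also the identity, and $\Theta$ is the required isomorphism.

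The one point needing care is that the identification $\U\cong M_n(C)$ be compatible with the entrywise description of $\Xi$. I would verify this directly on generators, rather than citing the general matrix-unit theorem abstractly.
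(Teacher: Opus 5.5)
Your proposal is correct and follows essentially the same route as the paper. You build the forward map from the universal property with $Q_n\mapsto QI_n$, use $P(1_R)=1_R$ to make the matrix units commute with $Q_n$, map $\U_{R,P}$ into the corner $e_{11}\U_{M_n(R),P_n}e_{11}$ via $r\mapsto re_{11}$ and $Q\mapsto e_{11}Q_ne_{11}$, assemble the inverse as $(u_{ij})\mapsto\sum_{i,j}e_{i1}\psi(u_{ij})e_{1j}$, and check both composites on generators; your explicit observation that $e_{ij}$, $re_{11}$ and $(QI_n)e_{11}$ generate $M_n(\U_{R,P})$ spells out a step the paper leaves implicit.
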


\begin{proof}
The entrywise operator $P_n$ is a Reynolds operator of weight $-1$.
Let $A=(a_{ij})\in M_n(R)$. The $(i,j)$-entry of
$$
 P_n(A)(QI_n)-(QI_n)P_n(A)
 -(QI_n)(A-P_n(A))(QI_n)
$$
is
$$
 P(a_{ij})Q-QP(a_{ij})
 -Q(a_{ij}-P(a_{ij}))Q,
$$
which is zero by Eq.~(\ref{eq:enveloping-compact}).
Hence Proposition~\ref{prop:enveloping-universal}\textup{(b)}
gives a unital algebra homomorphism
$$
 \Phi:\U_{M_n(R),P_n}\longrightarrow M_n(\U_{R,P})
$$
that is the identity on $M_n(R)$ and satisfies
$\Phi(Q_n)=QI_n$.

Let $E_{ij}$ be the standard matrix units in $M_n(R)$.
Since $P(1_R)=1_R$, we have $P_n(E_{ij})=E_{ij}$.
The defining relation of $\U_{M_n(R),P_n}$ corresponding to $E_{ij}$ gives $E_{ij}Q_n=Q_nE_{ij}$
for all $i,j$.

Set $q:=E_{11}Q_nE_{11}$, which belongs to the corner algebra
$E_{11}\U_{M_n(R),P_n}E_{11}$. This algebra has identity $E_{11}$,
and the map $R\to E_{11}\U_{M_n(R),P_n}E_{11}$ given by $r\mapsto rE_{11}$ is a unital
algebra homomorphism.
Applying the defining relation of $\U_{M_n(R),P_n}$ to $rE_{11}$ and multiplying
on the left and right by $E_{11}$ gives
$$
 P(r)E_{11}q-qP(r)E_{11}
 -q(r-P(r))E_{11}q=0.
$$
Thus Proposition~\ref{prop:enveloping-universal}\textup{(b)}
gives a unital algebra homomorphism
$$
 \psi:\U_{R,P}\longrightarrow E_{11}\U_{M_n(R),P_n}E_{11}
$$
satisfying $\psi(r)=rE_{11}$ and $\psi(Q)=q$.

Define
$$
 \Psi:M_n(\U_{R,P})\longrightarrow \U_{M_n(R),P_n},
 \quad
 \Psi((u_{ij}))
 =\sum_{i,j=1}^{n}E_{i1}\psi(u_{ij})E_{1j}.
$$
Since $\psi(u)\in E_{11}\U_{M_n(R),P_n}E_{11}$ and
$E_{1j}E_{k1}=\delta_{jk}E_{11}$, the map $\Psi$ is a unital algebra
homomorphism. It is the identity on $M_n(R)$.
Moreover,
$$
 \Psi(QI_n)
 =\sum_{i=1}^{n}E_{i1}qE_{1i}
 =\sum_{i=1}^{n}Q_nE_{ii}
 =Q_n.
$$
Thus $\Psi\circ\Phi$ fixes $M_n(R)$ and $Q_n$, while
$\Phi\circ\Psi$ fixes $M_n(R)$ and $QI_n$.
Hence $\Phi$ and $\Psi$ are mutually inverse.
\end{proof}

\begin{corollary}\label{cor:matrix-morita}
Under the assumptions of Proposition~\ref{prop:matrix-envelope},
the categories of left $(R,P)$-modules and left
$(M_n(R),P_n)$-modules are equivalent.

Explicitly, one equivalence sends $(M,p_M)$ to
$(M^n,p_M^{\oplus n})$, while a quasi-inverse sends
$(N,p_N)$ to $E_{11}N$ with the restricted operator
$p_N|_{E_{11}N}$.
\end{corollary}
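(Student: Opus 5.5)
The plan is to reduce the statement to classical Morita theory for matrix rings over $\U_{R,P}$, using Proposition~\ref{prop:matrix-envelope} and Theorem~\ref{thm:category-equivalence}.

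\textbf{Step 1: reduction to a ring-theoretic statement.} By Theorem~\ref{thm:category-equivalence}(a), ${}_{(R,P)}\mathsf{Mod}\simeq\U_{R,P}\text{-}\mathsf{Mod}$ and ${}_{(M_n(R),P_n)}\mathsf{Mod}\simeq\U_{M_n(R),P_n}\text{-}\mathsf{Mod}$. By Proposition~\ref{prop:matrix-envelope}, the second ring is canonically isomorphic to $M_n(\U_{R,P})$. The isomorphism is the identity on $M_n(R)$ and sends $Q_n$ to $QI_n$. It therefore suffices to invoke the classical Morita equivalence $U\text{-}\mathsf{Mod}\simeq M_n(U)\text{-}\mathsf{Mod}$ with $U=\U_{R,P}$. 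This equivalence sends $M$ to the column module $U^n\otimes_U M\cong M^n$, and a quasi-inverse sends $N$ to $e_{11}N$.

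\textbf{Step 2: transport the explicit functors.} Start with a left $U$-module $M$ coming from $(M,p_M)$. On $M^n$, the matrix $(u_{ij})\in M_n(U)$ acts by matrix multiplication. Restricting to $M_n(R)$ gives the usual action of $M_n(R)$ on columns. The operator $Q_n$ acts through $QI_n$, that is, by $p_M$ in each coordinate, so it acts as $p_M^{\oplus n}$.

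For the quasi-inverse, take $N$ a left $M_n(U)$-module. Here $E_{11}=e_{11}$, since $E_{11}\in M_n(R)$ maps to the matrix unit. The subspace $E_{11}N$ is an $R$-module via $r\cdot x=(rE_{11})x$. Under the isomorphism, $r$ corresponds to $re_{11}$ acting on the corner, which is exactly the classical corner action of $U$. The operator on $E_{11}N$ is the action of $Q\in U\cong e_{11}M_n(U)e_{11}$, namely $Qe_{11}$. This equals $E_{11}Q_nE_{11}$ under the isomorphism.

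\textbf{Step 3: compare with $p_N|_{E_{11}N}$ (the main check).} We need this corner operator to agree with the restriction of $p_N$. The relation $E_{11}Q_n=Q_nE_{11}$ was established in the proof of Proposition~\ref{prop:matrix-envelope}, using $P(1_R)=1_R$. From it, $p_N$ preserves $E_{11}N$, and for $x\in E_{11}N$,
$$
E_{11}Q_nE_{11}x=Q_nx=p_N(x).
$$
Hence the corner operator is exactly $p_N|_{E_{11}N}$. Naturality of both functors is inherited from the classical Morita equivalence, since morphisms are unchanged under the identifications in Theorem~\ref{thm:category-equivalence}.

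The only step that needs care is this last one: confirming that the transported functors have precisely the stated explicit form, in particular that $E_{11}$ commutes with $Q_n$.
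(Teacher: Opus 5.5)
Your proposal is correct and follows essentially the same route as the paper. You reduce via Theorem~\ref{thm:category-equivalence} and Proposition~\ref{prop:matrix-envelope} to the classical Morita equivalence between $\U_{R,P}$ and $M_n(\U_{R,P})$, then read off the column-module and corner functors using $Q_n\mapsto QI_n$ and $E_{11}Q_n=Q_nE_{11}$; your identification of the corner operator with $E_{11}Q_nE_{11}$ only spells out what the paper leaves implicit.
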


\begin{proof}
By Proposition~\ref{prop:matrix-envelope}, $\U_{M_n(R),P_n}\cong M_n(\U_{R,P})$.
The ordinary Morita equivalence between $\U_{R,P}$ and
$M_n(\U_{R,P})$, together with
Theorem~\ref{thm:category-equivalence}, gives the asserted
equivalence of Reynolds module categories.

Under this equivalence, a left $\U_{R,P}$-module $M$ is sent to the
column module $M^n$, and $Q_n$ acts as $QI_n$, hence componentwise as
$p_M^{\oplus n}$.
Conversely, the corner functor sends a left
$M_n(\U_{R,P})$-module $N$ to $E_{11}N$.
Since $E_{11}$ commutes with $Q_n$, this submodule is stable under
the module operator, which restricts to
$p_N|_{E_{11}N}$.
\end{proof}

\section{Homological properties of Reynolds modules}\label{sec:homological}

We now study projective, injective and flat Reynolds modules and the
derived functors of Hom and tensor product. Through the ring of Reynolds
operators, these notions agree with the corresponding notions in ordinary
module theory, while the resulting $\Ext$ and $\Tor$ groups show how homological
properties depend on the Reynolds operator.

\subsection{Projective, injective and flat Reynolds modules}
\label{subsec:projective-injective}
\label{subsec:flat}

These notions are closely related to the corresponding notions for
modules over the ring of Reynolds operators.

By Theorem~\ref{thm:category-equivalence}, a sequence of morphisms
of left $(R,P)$-modules is exact if and only if its underlying
sequence of $R$-modules is exact.

\begin{definition}\label{def:projective-injective}
\begin{enumerate}[label=\textup{(\alph*)}]
\item A left $(R,P)$-module $(L,p_L)$ is called
\textbf{projective} if, for every surjective morphism
$\pi:(M,p_M)\to(N,p_N)$ and every morphism
$f:(L,p_L)\to(N,p_N)$, there exists a morphism
$\widetilde f:(L,p_L)\to(M,p_M)$ such that
$\pi\circ\widetilde f=f$.

\item A left $(R,P)$-module $(E,p_E)$ is called
\textbf{injective} if, for every injective morphism
$\iota:(M,p_M)\to(N,p_N)$ and every morphism
$f:(M,p_M)\to(E,p_E)$, there exists a morphism
$\widetilde f:(N,p_N)\to(E,p_E)$ such that
$\widetilde f\circ\iota=f$.
\end{enumerate}
\end{definition}

Equivalently, $(L,p_L)$ is projective if and only if
$\Hom_{(R,P)}(L,-)$ is exact, and $(E,p_E)$ is injective if and
only if $\Hom_{(R,P)}(-,E)$ is exact.
Projective and injective right $(R,P)$-modules are defined
analogously.

\begin{proposition}\label{prop:projective-injective-enveloping}
Under the equivalence of Theorem~\ref{thm:category-equivalence},
a left $(R,P)$-module is projective, respectively injective, if
and only if the corresponding left $\U_{R,P}$-module is projective,
respectively injective. The analogous statement holds for right
$(R,P)$-modules.
\end{proposition}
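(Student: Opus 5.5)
The plan is to transport everything along the equivalence of Theorem~\ref{thm:category-equivalence}, using that it is an isomorphism of categories: it leaves underlying sets and maps unchanged. Denote by $\mathcal{G}:{}_{(R,P)}\mathsf{Mod}\to\U_{R,P}\text{-}\mathsf{Mod}$ the functor from part~\textup{(a)} of that theorem. By its construction, $\mathcal G$ sends $(M,p_M)$ to the same $R$-module $M$ with $Q$ acting by $p_M$, and it sends each morphism to the same underlying map. Moreover, a map $f$ is a morphism of left $(R,P)$-modules exactly when $\mathcal G(f)$ is $\U_{R,P}$-linear. Hence $\mathcal G$ is bijective on objects, and $\Hom_{(R,P)}(L,M)=\Hom_{\U_{R,P}}(\mathcal G L,\mathcal G M)$ as $\kfield$-modules.

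The first key step is to check that surjective and injective morphisms correspond. A morphism $\pi$ of left $(R,P)$-modules is surjective (respectively injective) exactly when its underlying map is. That underlying map is the same as that of $\mathcal G(\pi)$. So the classes of epimorphisms used in Definition~\ref{def:projective-injective}(a) and the surjections of $\U_{R,P}$-modules correspond bijectively, and the same holds for monomorphisms and injective module maps.

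Next, I will compare the lifting problems directly. Given a surjection $\pi:(M,p_M)\to(N,p_N)$ and a morphism $f:(L,p_L)\to(N,p_N)$, a lift $\widetilde f$ exists in ${}_{(R,P)}\mathsf{Mod}$ iff a $\U_{R,P}$-linear lift of $\mathcal G(f)$ through $\mathcal G(\pi)$ exists. The reason is that both conditions describe the same set of maps, by the equality of Hom-sets above. Since every surjection of $\U_{R,P}$-modules is $\mathcal G(\pi)$ for a unique such $\pi$, $(L,p_L)$ is projective iff $\mathcal G L$ is projective in the usual sense. The injective case is dual, using extension along injections. Alternatively, one can phrase this as exactness of $\Hom_{(R,P)}(L,-)$ versus $\Hom_{\U_{R,P}}(\mathcal G L,-)$, noting that exactness of sequences is detected on underlying $R$-modules on both sides.

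For right modules, the identical argument applies with the functor from Theorem~\ref{thm:category-equivalence}(b), where $Q$ acts on the right by $p_M$. I do not expect a genuine obstacle. The only point needing care is confirming that the equivalence is literally the identity on underlying maps, so that surjectivity and injectivity are preserved and reflected. This is explicit in the proof of Theorem~\ref{thm:category-equivalence}, and it avoids any general appeal to equivalences preserving epimorphisms.
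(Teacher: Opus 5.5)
Your proposal is correct and follows essentially the same route as the paper. The equivalence of Theorem~\ref{thm:category-equivalence} leaves underlying maps unchanged, so Hom-sets, surjections and injections coincide, and the lifting and extension properties transfer verbatim in both directions, likewise for right modules; you simply spell out the details that the paper's short proof leaves implicit.
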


\begin{proof}
Theorem~\ref{thm:category-equivalence} identifies morphisms of
$(R,P)$-modules with $\U_{R,P}$-module homomorphisms and preserves
surjective and injective maps. Hence the lifting property for
projectivity and the extension property for injectivity hold in one
category if and only if they hold in the other. The same argument
applies to right modules.
\end{proof}

Consequently, the categories of left and right $(R,P)$-modules have
enough projective and injective objects, since the corresponding
module categories over $\U_{R,P}$ do.

\begin{example}\label{ex:projective-injective}
Let $I$ be an injective module. Equip
$\Hom_{\kfield}(\U_{R,P},I)$ with the left $\U_{R,P}$-action
$$
 (u\cdot f)(v)=f(vu),
 \quad u,v\in\U_{R,P}.
$$
Then $\Hom_{\kfield}(\U_{R,P},I)$ is an injective left
$\U_{R,P}$-module.

Indeed, for every left $\U_{R,P}$-module $M$, there is a natural
isomorphism
$$
 \Hom_{\U_{R,P}}
 \bigl(M,\Hom_{\kfield}(\U_{R,P},I)\bigr)
 \cong
 \Hom_{\kfield}(M,I).
$$
Since $I$ is injective, the functor on the right is exact.
Hence $\Hom_{\kfield}(\U_{R,P},I)$ is injective as a left
$\U_{R,P}$-module.

Restricting the action to $R$ and letting $Q$ act by
$p(f)(v)=f(vQ)$ gives an injective left $(R,P)$-module by
Proposition~\ref{prop:projective-injective-enveloping}.

In particular, if $\kfield$ is a field, taking $I=\kfield$ gives
the injective left $(R,P)$-module
$\Hom_{\kfield}(\U_{R,P},\kfield)$.
\end{example}

We next consider flat Reynolds modules.

\begin{definition}\label{def:flat}
A left $(R,P)$-module $(N,p_N)$ is called \textbf{flat} if the functor
$$
 -\otimes_{(R,P)}N:
 \mathsf{Mod}_{(R,P)}\longrightarrow\kfield\text{-}\mathsf{Mod}
$$
is exact. A right $(R,P)$-module is called \textbf{flat} analogously.
\end{definition}

\begin{theorem}\label{thm:flat-equivalence}
A left $(R,P)$-module is flat if and only if the corresponding left
$\U_{R,P}$-module is flat. The analogous statement holds for right
$(R,P)$-modules.
\end{theorem}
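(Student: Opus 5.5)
The strategy is to transport the problem through the equivalence of Theorem~\ref{thm:category-equivalence} and the natural isomorphism of Theorem~\ref{thm:tensor-enveloping}. Once both are in place, flatness on either side reduces to exactness of one and the same functor.

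Fix a left $(R,P)$-module $(N,p_N)$ and regard it as a left $\U_{R,P}$-module with $Qn=p_N(n)$. By Theorem~\ref{thm:category-equivalence}\textup{(b)}, the category $\mathsf{Mod}_{(R,P)}$ is isomorphic to $\mathsf{Mod}\text{-}\U_{R,P}$. The two constructions leave underlying modules and maps unchanged, so this is in fact an isomorphism of categories. In particular, a sequence $M'\to M\to M''$ of right $(R,P)$-modules is exact if and only if the same sequence is exact as right $\U_{R,P}$-modules, since exactness is checked on underlying abelian groups in both settings. This matches the remark at the start of the section for left modules.

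Next, Theorem~\ref{thm:tensor-enveloping} gives isomorphisms $\theta_{M,N}:M\otimes_{(R,P)}N\to M\otimes_{\U_{R,P}}N$ that are natural in $M$. Hence the functor $-\otimes_{(R,P)}N$ on $\mathsf{Mod}_{(R,P)}$ is naturally isomorphic to the composite of the isomorphism $\mathsf{Mod}_{(R,P)}\to\mathsf{Mod}\text{-}\U_{R,P}$ with $-\otimes_{\U_{R,P}}N$. Applying these functors to a short exact sequence $0\to M'\to M\to M''\to0$ in $\mathsf{Mod}_{(R,P)}$, naturality yields a commutative ladder whose vertical maps are the isomorphisms $\theta$. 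So the top row is exact if and only if the bottom row is.

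Since every short exact sequence of right $\U_{R,P}$-modules arises from one of right $(R,P)$-modules, exactness of $-\otimes_{(R,P)}N$ is equivalent to exactness of $-\otimes_{\U_{R,P}}N$, which is the usual definition of flatness of $N$ over $\U_{R,P}$. For right modules the argument is symmetric. We use Theorem~\ref{thm:category-equivalence}\textup{(a)} for the left module category, and the naturality of $\theta_{M,N}$ in $N$, with $M$ now fixed.

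The only point needing care is that the naturality in Theorem~\ref{thm:tensor-enveloping} really is naturality in each variable separately, so that the ladder commutes. This is exactly what the naturality square in that proof verifies, so I expect no genuine obstacle.
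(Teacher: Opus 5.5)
Your proposal is correct and is essentially the paper's proof: you transport exact sequences of right $(R,P)$-modules through the category isomorphism of Theorem~\ref{thm:category-equivalence}\textup{(b)}, and you compare $-\otimes_{(R,P)}N$ with $-\otimes_{\U_{R,P}}N$ via the isomorphism of Theorem~\ref{thm:tensor-enveloping}, which is natural in $M$. Your commutative-ladder argument only spells out a step the paper states in one sentence, and both treat the right-module case symmetrically.
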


\begin{proof}
Let $(N,p_N)$ be a left $(R,P)$-module.
By Theorem~\ref{thm:category-equivalence}, exact sequences of right
$(R,P)$-modules correspond to exact sequences of right
$\U_{R,P}$-modules. Moreover,
Theorem~\ref{thm:tensor-enveloping} gives a natural isomorphism
$$
 M\otimes_{(R,P)}N
 \cong
 M\otimes_{\U_{R,P}}N
$$
for every right $(R,P)$-module $M$, compatible with morphisms in $M$.
Hence $-\otimes_{(R,P)}N$ is exact if and only if
$-\otimes_{\U_{R,P}}N$ is exact.
This proves the assertion for left modules.
The right-module case is analogous.
\end{proof}

In particular, every projective left or right $(R,P)$-module is flat.
Since free left $(R,P)$-modules correspond to free left modules over
$\U_{R,P}$, every free left $(R,P)$-module is flat. The analogous
statement holds for right modules.

\subsection{Derived functors}\label{subsec:derived}

We now define the derived functors of Hom and tensor products in the
categories of Reynolds modules, using the standard conventions of
homological algebra~\cite[Chapters~2-3]{Weibel1994}.

\begin{definition}\label{def:derived-functors}
Let $(M,p_M)$ and $(N,p_N)$ be left $(R,P)$-modules, and let
$(L,p_L)$ be a right $(R,P)$-module. For $n\geq0$, define
$\Ext^n_{(R,P)}(M,N)$ as the $n$-th right derived functor of
$\Hom_{(R,P)}(M,-)$ evaluated at $N$, and define
$\Tor_n^{(R,P)}(L,M)$ as the $n$-th left derived functor of
$L\otimes_{(R,P)}-$ evaluated at $M$.
\end{definition}

\begin{proposition}\label{prop:derived-properties}
For every $n\geq0$, there are natural isomorphisms
$$
 \Ext^n_{(R,P)}(M,N)
 \cong
 \Ext^n_{\U_{R,P}}(M,N),
 \quad
 \Tor_n^{(R,P)}(L,M)
 \cong
 \Tor_n^{\U_{R,P}}(L,M).
$$
In particular,
$\Ext^0_{(R,P)}(M,N)\cong\Hom_{(R,P)}(M,N)$ and
$\Tor_0^{(R,P)}(L,M)\cong L\otimes_{(R,P)}M$.
Projective or injective resolutions compute $\Ext$, while projective
or flat resolutions compute $\Tor$. The usual vanishing properties
and long exact sequences also hold.
\end{proposition}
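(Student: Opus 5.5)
The plan is to transport everything through the equivalence of Theorem~\ref{thm:category-equivalence}. Write $\Theta:{}_{(R,P)}\mathsf{Mod}\to\U_{R,P}\text{-}\mathsf{Mod}$ for this equivalence; it is in fact an isomorphism of categories, since it leaves underlying modules and maps unchanged. Likewise let $\Theta'$ be the right-module version.

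\textbf{Step 1: transport of the basic structure.} By the remark at the start of Subsection~\ref{subsec:projective-injective}, $\Theta$ preserves and reflects exact sequences. By Proposition~\ref{prop:projective-injective-enveloping} and Theorem~\ref{thm:flat-equivalence}, $\Theta$ preserves and reflects projective, injective and flat objects. Moreover, $\Hom_{(R,P)}(M,-)$ equals $\Hom_{\U_{R,P}}(M,-)\circ\Theta$ on the nose. By Theorem~\ref{thm:tensor-enveloping}, $L\otimes_{(R,P)}-$ is naturally isomorphic to $L\otimes_{\U_{R,P}}-$ composed with $\Theta$.

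\textbf{Step 2: the isomorphisms for $\Ext$ and $\Tor$.} The category ${}_{(R,P)}\mathsf{Mod}$ has enough injectives and projectives, as noted after Proposition~\ref{prop:projective-injective-enveloping}. To compute $\Ext^n_{(R,P)}(M,N)$, take an injective resolution $N\to E^\bullet$ in ${}_{(R,P)}\mathsf{Mod}$. By Step~1 its image under $\Theta$ is an injective resolution of $N$ over $\U_{R,P}$, and applying $\Hom$ gives literally the same cochain complex. Hence the cohomology groups agree, and naturality follows from the comparison theorem.

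For $\Tor_n^{(R,P)}(L,M)$, take a projective resolution $P_\bullet\to M$ of Reynolds modules. This is also a projective resolution over $\U_{R,P}$. The natural isomorphisms $\theta_{L,P_i}$ of Theorem~\ref{thm:tensor-enveloping} commute with the differentials by their naturality, so they form an isomorphism of chain complexes. Passing to homology gives the claim, natural in $L$ and $M$.

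\textbf{Step 3: the remaining assertions.} The degree-zero statements follow because $R^0\Hom_{(R,P)}(M,-)=\Hom_{(R,P)}(M,-)$, which is left exact, and $L_0(L\otimes_{(R,P)}-)=L\otimes_{(R,P)}-$, which is right exact by the remark after Theorem~\ref{thm:tensor-enveloping}. Every other claim transfers from $\U_{R,P}$ through the isomorphisms of Step~2 together with Step~1:
\begin{itemize}
\item balancing, so that $\Ext$ may be computed by projective resolutions in the first variable;
\item computing $\Tor$ by flat resolutions;
\item vanishing of $\Ext^{n}(-,E)$ for $E$ injective, $\Ext^n(P,-)$ for $P$ projective, and $\Tor_n(-,F)$ for $F$ flat, for $n\ge1$;
\item the long exact sequences in either variable.
\end{itemize}

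The only real obstacle is bookkeeping naturality. In particular, the long exact sequences and connecting morphisms must correspond under the isomorphisms. For this I would identify both sides as universal $\delta$-functors agreeing in degree $0$: effaceability is preserved by $\Theta$, so uniqueness of universal $\delta$-functors yields the compatibility without any chain-level computation.
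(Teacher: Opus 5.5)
Your proposal is correct and follows essentially the same route as the paper. You transport everything along the equivalence (in fact an isomorphism) of categories in Theorem~\ref{thm:category-equivalence}, use Proposition~\ref{prop:projective-injective-enveloping} and Theorem~\ref{thm:flat-equivalence} to match resolutions, and use $\Hom_{(R,P)}=\Hom_{\U_{R,P}}$ together with the natural isomorphism of Theorem~\ref{thm:tensor-enveloping} to identify the computing complexes. Your universal $\delta$-functor argument for the connecting morphisms only makes explicit what the paper leaves to the standard properties of $\Ext$ and $\Tor$ over a ring.
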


\begin{proof}
By Theorem~\ref{thm:category-equivalence}, the categories of left and
right $(R,P)$-modules are equivalent to the corresponding module
categories over $\U_{R,P}$.

By Proposition~\ref{prop:projective-injective-enveloping},
projective and injective $(R,P)$-modules correspond precisely to
projective and injective $\U_{R,P}$-modules. Moreover,
$\Hom_{(R,P)}(M,N)=\Hom_{\U_{R,P}}(M,N)$.
Thus the complexes computing the corresponding Ext groups agree,
which gives
$\Ext^n_{(R,P)}(M,N)\cong\Ext^n_{\U_{R,P}}(M,N)$.

Similarly, Theorem~\ref{thm:flat-equivalence} identifies flat
$(R,P)$-modules with flat $\U_{R,P}$-modules, while
Theorem~\ref{thm:tensor-enveloping} gives
$L\otimes_{(R,P)}M\cong L\otimes_{\U_{R,P}}M$ naturally.
Hence the complexes computing the corresponding Tor groups agree.
The remaining assertions follow from the standard properties of
Ext and Tor over a ring.
\end{proof}

The composition of tensor functors gives the following spectral
sequence. see~\cite[Section~5.8]{Weibel1994}.

\begin{theorem}\label{thm:tor-spectral}
Let $(B,p_B^S,p_B^R)$ be an $(S,T)$-$(R,P)$-bimodule and suppose
that $B$ is flat as a left $(S,T)$-module. Let $(C,p_C)$ be a right
$(S,T)$-module and let $(N,p_N)$ be a left $(R,P)$-module.
Then $\Tor_q^{(R,P)}(B,N)$ carries a natural left $(S,T)$-module
structure, while $C\otimes_{(S,T)}B$ carries a natural right
$(R,P)$-module structure. Moreover, there is a natural
first-quadrant spectral sequence
\begin{equation}\label{eq:tor-spectral}
 E^2_{p,q}
 =
 \Tor_p^{(S,T)}
 \bigl(C,\Tor_q^{(R,P)}(B,N)\bigr)
 \Longrightarrow
 \Tor_{p+q}^{(R,P)}
 \bigl(C\otimes_{(S,T)}B,N\bigr).
\end{equation}
\end{theorem}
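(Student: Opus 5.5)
Under Theorem~\ref{thm:category-equivalence}\textup{(c)}, I regard $B$ as a $\U_{S,T}$-$\U_{R,P}$-bimodule, $C$ as a right $\U_{S,T}$-module and $N$ as a left $\U_{R,P}$-module. The result is then the classical Grothendieck (base-change) spectral sequence for Tor, as in \cite[Section~5.8]{Weibel1994}, transported back along the identifications of Theorem~\ref{thm:tensor-enveloping} and Proposition~\ref{prop:derived-properties}. Hence the proof splits into identifying the module structures and then checking the hypotheses of Grothendieck's theorem for the composite $C\otimes_{\U_{S,T}}(B\otimes_{\U_{R,P}}-)$.

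\textbf{Module structures.} By Proposition~\ref{prop:derived-properties}, $\Tor_q^{(R,P)}(B,N)\cong\Tor_q^{\U_{R,P}}(B,N)$. The group should be read with $B$ as a right $(R,P)$-module, since that is how the left-hand Tor makes sense. I compute it from a projective resolution $P_\bullet\to N$ over $\U_{R,P}$. Then $B\otimes_{\U_{R,P}}P_\bullet$ is a complex of left $\U_{S,T}$-modules, so its homology is a left $\U_{S,T}$-module, hence a left $(S,T)$-module by Theorem~\ref{thm:category-equivalence}. Explicitly, $S$ acts through $B$ and the operator acts by $p_B^S\otimes\id$. This structure is independent of the resolution up to canonical isomorphism, by comparison of resolutions. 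Similarly $C\otimes_{(S,T)}B\cong C\otimes_{\U_{S,T}}B$ is a right $\U_{R,P}$-module, with operator $\id\otimes p_B^R$, exactly as in Proposition~\ref{prop:change-scalars}.

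\textbf{Spectral sequence.} Set $G=B\otimes_{\U_{R,P}}-:\U_{R,P}\text{-}\mathsf{Mod}\to\U_{S,T}\text{-}\mathsf{Mod}$ and $F=C\otimes_{\U_{S,T}}-$. Associativity gives $F\circ G\cong (C\otimes_{\U_{S,T}}B)\otimes_{\U_{R,P}}-$, whose left derived functors are $\Tor^{\U_{R,P}}_{n}(C\otimes_{\U_{S,T}}B,-)$. Both $F$ and $G$ are right exact. For the Grothendieck spectral sequence, $G$ must send projectives to $F$-acyclic objects. If $P$ is a projective left $\U_{R,P}$-module, then $P$ is a summand of a free module $\U_{R,P}^{(X)}$. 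Thus $B\otimes_{\U_{R,P}}P$ is a summand of $B^{(X)}$, which is flat as a left $\U_{S,T}$-module because $B$ is flat as a left $(S,T)$-module (Theorem~\ref{thm:flat-equivalence}). Flat modules are $F$-acyclic. Grothendieck's theorem then yields
$$
E^2_{p,q}=\Tor_p^{\U_{S,T}}\bigl(C,\Tor_q^{\U_{R,P}}(B,N)\bigr)\Longrightarrow\Tor_{p+q}^{\U_{R,P}}\bigl(C\otimes_{\U_{S,T}}B,N\bigr).
$$
The spectral sequence is first-quadrant and natural in $C$ and $N$. Translating each term through Proposition~\ref{prop:derived-properties} and Theorem~\ref{thm:tensor-enveloping} gives Eq.~(\ref{eq:tor-spectral}).

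\textbf{Main obstacle.} The delicate point is bookkeeping rather than a new idea. I must make sure that the $\U_{S,T}$-module structure on $\Tor_q^{\U_{R,P}}(B,N)$ produced by the derived functor of $G$ coincides with the Reynolds structure described above. I must also make sure that the isomorphisms of Proposition~\ref{prop:derived-properties} are compatible with these extra actions, so that the $E^2$ term is genuinely computed in ${}_{(S,T)}\mathsf{Mod}$. I would handle this by working throughout with a single projective resolution of $N$ over $\U_{R,P}$. Since the identifications in Theorem~\ref{thm:category-equivalence} leave underlying maps unchanged, compatibility reduces to naturality of the identity maps.
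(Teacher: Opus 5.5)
Your proposal is correct and follows essentially the same route as the paper: regard everything as modules over $\U_{S,T}$ and $\U_{R,P}$, then check that $B\otimes_{\U_{R,P}}-$ sends projectives to flat, hence $C\otimes_{\U_{S,T}}-$-acyclic, modules via the summand-of-$B^{(X)}$ argument, apply the Grothendieck spectral sequence, and translate back through Theorem~\ref{thm:tensor-enveloping} and Proposition~\ref{prop:derived-properties}. Your explicit use of tensor associativity to identify the composite functor, and your remarks on compatibility of the module structures, make explicit what the paper leaves implicit.
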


\begin{proof}
By Theorem~\ref{thm:category-equivalence}, $B$ is a
$\U_{S,T}$-$\U_{R,P}$-bimodule, $C$ is a right
$\U_{S,T}$-module, and $N$ is a left $\U_{R,P}$-module.
Consider the right exact functors
$$
 B\otimes_{\U_{R,P}}-:
 \U_{R,P}\text{-}\mathsf{Mod}\longrightarrow
 \U_{S,T}\text{-}\mathsf{Mod}
$$
and
$$
 C\otimes_{\U_{S,T}}-:
 \U_{S,T}\text{-}\mathsf{Mod}\longrightarrow
 \kfield\text{-}\mathsf{Mod}.
$$
Let $E$ be a projective left $\U_{R,P}$-module. It is a direct
summand of a free module $\U_{R,P}^{(I)}$ for some set $I$.
Hence $B\otimes_{\U_{R,P}}E$ is a direct summand of $B^{(I)}$.
By Theorem~\ref{thm:flat-equivalence}, the assumed left flatness
of $B$ is its flatness as a left $\U_{S,T}$-module. Thus
$B\otimes_{\U_{R,P}}E$ is flat over $\U_{S,T}$ and is acyclic
for the left derived functors of $C\otimes_{\U_{S,T}}-$.

The homological Grothendieck spectral sequence therefore gives
$$
 E^2_{p,q}
 =\Tor_p^{\U_{S,T}}
 \bigl(C,\Tor_q^{\U_{R,P}}(B,N)\bigr)
 \Longrightarrow
 \Tor_{p+q}^{\U_{R,P}}
 \bigl(C\otimes_{\U_{S,T}}B,N\bigr).
$$
The left $\U_{S,T}$-action on $B$ induces the left
$\U_{S,T}$-module structure on $\Tor_q^{\U_{R,P}}(B,N)$.
The right $\U_{R,P}$-action on $B$ induces the right
$\U_{R,P}$-module structure on $C\otimes_{\U_{S,T}}B$.
Theorems~\ref{thm:category-equivalence} and
\ref{thm:tensor-enveloping}, together with
Proposition~\ref{prop:derived-properties}, translate this spectral
sequence into Eq.~(\ref{eq:tor-spectral}).
\end{proof}

\begin{corollary}\label{cor:derived-tensor-associativity}
Under the assumptions of Theorem~\ref{thm:tor-spectral}, suppose in
addition that $B$ is flat as a right $(R,P)$-module.
Then, for every $n\geq0$, there is a natural isomorphism
$$
 \Tor_n^{(S,T)}
 \bigl(C,B\otimes_{(R,P)}N\bigr)
 \cong
 \Tor_n^{(R,P)}
 \bigl(C\otimes_{(S,T)}B,N\bigr).
$$
For $n=0$, this is the usual associativity isomorphism of tensor
products.
\end{corollary}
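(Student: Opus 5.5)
The plan is to read the corollary off the spectral sequence of Theorem~\ref{thm:tor-spectral} by showing that it collapses onto the row $q=0$. Throughout, I work over the rings of Reynolds operators: by Theorem~\ref{thm:category-equivalence}, $B$ is a $\U_{S,T}$-$\U_{R,P}$-bimodule. By Theorem~\ref{thm:flat-equivalence}, the added hypothesis says that $B$ is flat as a right $\U_{R,P}$-module.

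\emph{Vanishing of the higher rows.} Flatness of $B_{\U_{R,P}}$ gives $\Tor_q^{\U_{R,P}}(B,N)=0$ for all $q\geq1$. By Proposition~\ref{prop:derived-properties}, this is $\Tor_q^{(R,P)}(B,N)=0$. For $q=0$ we have $\Tor_0^{(R,P)}(B,N)\cong B\otimes_{(R,P)}N$ by Proposition~\ref{prop:derived-properties} and Theorem~\ref{thm:tensor-enveloping}. The point to check is that this isomorphism respects the left $(S,T)$-module structures. The structure on $\Tor_0$ in Theorem~\ref{thm:tor-spectral} is induced by the left $\U_{S,T}$-action on $B$. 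So is the structure on $B\otimes_{(R,P)}N$ in Proposition~\ref{prop:change-scalars}: there $S$ and the operator generator act on the first factor. Hence the isomorphism is $\U_{S,T}$-linear.

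\emph{Collapse.} In the first-quadrant spectral sequence \eqref{eq:tor-spectral}, only the row $q=0$ survives at $E^2$:
\[
E^2_{p,0}=\Tor_p^{(S,T)}\bigl(C,B\otimes_{(R,P)}N\bigr),\qquad E^2_{p,q}=0\ (q\geq1).
\]
Every differential $d^r$ with $r\geq2$ changes $q$ by $r-1\geq1$, so it has source or target zero. Hence $E^2=E^\infty$. The filtration on the abutment $\Tor_n^{(R,P)}(C\otimes_{(S,T)}B,N)$ then has a single nonzero graded piece, $E^\infty_{n,0}$. This gives the edge isomorphism
\[
\Tor_n^{(S,T)}\bigl(C,B\otimes_{(R,P)}N\bigr)\cong\Tor_n^{(R,P)}\bigl(C\otimes_{(S,T)}B,N\bigr).
\]
It is natural in $C$ and $N$ because the spectral sequence and edge maps are natural.

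\emph{The case $n=0$.} Here I would check directly that the identification is the associativity map $c\otimes(b\otimes n)\mapsto(c\otimes b)\otimes n$. Both sides are $\Tor_0$, that is, the composite right exact functor. The edge map in degree $0$ of the Grothendieck spectral sequence is the canonical isomorphism $C\otimes_{\U_{S,T}}(B\otimes_{\U_{R,P}}N)\cong(C\otimes_{\U_{S,T}}B)\otimes_{\U_{R,P}}N$. Translating through Theorem~\ref{thm:tensor-enveloping} gives the usual associativity isomorphism of Reynolds tensor products.

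The main obstacle I expect is bookkeeping rather than mathematics. It consists of identifying the module structures and the degree-zero edge map correctly, so that the statement holds as a natural isomorphism and not just abstractly. The collapse itself is standard.

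As a fallback that avoids spectral sequences, take a projective resolution $F_\bullet\to N$ over $\U_{R,P}$. Flatness of $B_{\U_{R,P}}$ makes $B\otimes F_\bullet$ a resolution of $B\otimes N$. Its terms are flat over $\U_{S,T}$, by the argument in the proof of Theorem~\ref{thm:tor-spectral}. Since flat resolutions compute $\Tor$, the homology of $C\otimes_{\U_{S,T}}(B\otimes F_\bullet)\cong(C\otimes B)\otimes F_\bullet$ computes both sides at once.
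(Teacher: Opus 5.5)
Your proposal is correct and follows the paper's route. Flatness of $B$ over $\U_{R,P}$ kills the rows $q\geq1$ of the spectral sequence in Theorem~\ref{thm:tor-spectral}, which then collapses onto the row $q=0$; your fallback, where $B\otimes_{\U_{R,P}}F_\bullet$ is a $\U_{S,T}$-flat resolution of $B\otimes_{\U_{R,P}}N$ for a projective resolution $F_\bullet\to N$, is also valid and avoids spectral sequences entirely.
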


\begin{proof}
Since $B$ is flat as a right $(R,P)$-module,
$\Tor_q^{(R,P)}(B,N)=0$ for $q>0$, while
$\Tor_0^{(R,P)}(B,N)=B\otimes_{(R,P)}N$.
Hence the spectral sequence in
Theorem~\ref{thm:tor-spectral} is concentrated in the row $q=0$
and degenerates at $E^2$, giving the stated isomorphisms.
\end{proof}

The Hom-tensor adjunction has the following derived form.

\begin{theorem}\label{thm:ext-spectral}
Let $(B,p_B^S,p_B^R)$ be an $(S,T)$-$(R,P)$-bimodule and suppose
that $B$ is flat as a right $(R,P)$-module. Let $(N,p_N)$ be a left
$(R,P)$-module and let $(L,p_L)$ be a left $(S,T)$-module.
Then $\Ext^q_{(S,T)}(B,L)$ carries a natural left $(R,P)$-module
structure for every $q\geq0$, and there is a natural cohomological
first-quadrant spectral sequence
\begin{equation}\label{eq:ext-spectral}
 E_2^{p,q}
 =
 \Ext^p_{(R,P)}
 \bigl(N,\Ext^q_{(S,T)}(B,L)\bigr)
 \Longrightarrow
 \Ext^{p+q}_{(S,T)}
 \bigl(B\otimes_{(R,P)}N,L\bigr).
\end{equation}
\end{theorem}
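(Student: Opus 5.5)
We argue as for Theorem~\ref{thm:tor-spectral}, translating everything to modules over the rings of Reynolds operators and then invoking the cohomological Grothendieck spectral sequence for a composite of left exact functors. By Theorem~\ref{thm:category-equivalence}, $B$ is a $\U_{S,T}$-$\U_{R,P}$-bimodule, $N$ is a left $\U_{R,P}$-module and $L$ is a left $\U_{S,T}$-module. By Proposition~\ref{prop:derived-properties} and Theorem~\ref{thm:tensor-enveloping}, it suffices to produce
$$
 \Ext^p_{\U_{R,P}}\bigl(N,\Ext^q_{\U_{S,T}}(B,L)\bigr)
 \Longrightarrow
 \Ext^{p+q}_{\U_{S,T}}\bigl(B\otimes_{\U_{R,P}}N,L\bigr),
$$
natural in $N$ and $L$.

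First, the module structure. Compute $\Ext^q_{\U_{S,T}}(B,L)$ with an injective resolution $L\to I^\bullet$ of left $\U_{S,T}$-modules. Each $\Hom_{\U_{S,T}}(B,I^k)$ is a left $\U_{R,P}$-module via $(u\cdot f)(b)=f(bu)$, exactly as in Lemma~\ref{lem:hom-reynolds-module}. The differentials are $\U_{R,P}$-linear, so the cohomology inherits a left $\U_{R,P}$-structure. Independence of the resolution follows because the comparison maps are unique up to homotopy and are $\U_{R,P}$-linear. Restricting this action to $R$ and $Q$ gives the left $(R,P)$-module structure: $r$ acts through right multiplication on $B$, and the operator acts through $p_B^R$.

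Next, the functors. Set $G=\Hom_{\U_{S,T}}(B,-):\U_{S,T}\text{-}\mathsf{Mod}\to\U_{R,P}\text{-}\mathsf{Mod}$ and $F=\Hom_{\U_{R,P}}(N,-)$. By the Hom-tensor adjunction (Theorem~\ref{thm:hom-tensor}), $F\circ G\cong\Hom_{\U_{S,T}}(B\otimes_{\U_{R,P}}N,-)$ naturally. Both functors are left exact and the module categories have enough injectives, so the only remaining condition is that $G$ sends injectives to $F$-acyclic objects.

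This is where flatness enters, and it is the step I expect to need the most care. Let $I$ be an injective left $\U_{S,T}$-module. For any left $\U_{R,P}$-module $X$, adjunction gives $\Hom_{\U_{R,P}}(X,G(I))\cong\Hom_{\U_{S,T}}(B\otimes_{\U_{R,P}}X,I)$. The functor $B\otimes_{\U_{R,P}}-$ is exact, because $B$ is flat as a right $(R,P)$-module and hence flat as a right $\U_{R,P}$-module by Theorem~\ref{thm:flat-equivalence}. Since $\Hom_{\U_{S,T}}(-,I)$ is exact, the composite is exact, so $G(I)$ is injective and therefore $F$-acyclic.

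The Grothendieck spectral sequence then has $E_2^{p,q}=(R^pF)(R^qG)(L)$ converging to $R^{p+q}(F\circ G)(L)$. These terms are the displayed Ext groups over the rings of Reynolds operators. Applying Proposition~\ref{prop:derived-properties} and Theorem~\ref{thm:tensor-enveloping} translates them into Eq.~(\ref{eq:ext-spectral}). Naturality in $L$ is part of the Grothendieck construction. Naturality in $N$ follows because a map $N\to N'$ induces a natural transformation between the composites $F\circ G$, and this transformation is compatible with the Cartan--Eilenberg resolutions used in the construction.
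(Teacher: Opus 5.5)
Your proposal is correct and follows essentially the same route as the paper. The module structure on $\Ext^q_{(S,T)}(B,L)$ comes from applying $\Hom_{(S,T)}(B,-)$ to an injective resolution of $L$. Flatness of $B$ together with the Hom-tensor adjunction shows that $\Hom_{(S,T)}(B,I)$ is injective for injective $I$. The Grothendieck spectral sequence for $\Hom_{(R,P)}(N,-)\circ\Hom_{(S,T)}(B,-)\cong\Hom_{(S,T)}(B\otimes_{(R,P)}N,-)$ then gives the result. The only difference is cosmetic: you work over $\U_{S,T}$ and $\U_{R,P}$ and transport back at the end, whereas the paper argues directly in the Reynolds module categories.
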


\begin{proof}
Choose an injective resolution $L\to I^\bullet$ in
${}_{(S,T)}\mathsf{Mod}$.
By Lemma~\ref{lem:hom-reynolds-module},
each $\Hom_{(S,T)}(B,I^j)$ is a left $(R,P)$-module, and the
differentials are morphisms of left $(R,P)$-modules.
Hence the cohomology groups
$\Ext^q_{(S,T)}(B,L)$ inherit natural left $(R,P)$-module
structures.

We verify the acyclicity condition for the Grothendieck spectral
sequence. Let $I$ be an injective left $(S,T)$-module.
Since $B$ is flat as a right $(R,P)$-module, every monomorphism
$X\to Y$ of left $(R,P)$-modules induces a monomorphism
$$
 B\otimes_{(R,P)}X
 \longrightarrow
 B\otimes_{(R,P)}Y.
$$
By Theorem~\ref{thm:hom-tensor}, applying
$\Hom_{(R,P)}(-,\Hom_{(S,T)}(B,I))$ to $X\to Y$ is naturally
identified with applying $\Hom_{(S,T)}(-,I)$ to this induced
monomorphism. The latter map is surjective because $I$ is injective.
Thus $\Hom_{(S,T)}(B,I)$ is injective as a left $(R,P)$-module.

Applying the Grothendieck spectral sequence to the left exact
functors
$$
 \Hom_{(S,T)}(B,-)
 \quad\text{and}\quad
 \Hom_{(R,P)}(N,-)
$$
gives
$$
 E_2^{p,q}
 =
 \Ext^p_{(R,P)}
 \bigl(N,\Ext^q_{(S,T)}(B,L)\bigr).
$$
By Theorem~\ref{thm:hom-tensor}, their composite is naturally
isomorphic to
$\Hom_{(S,T)}(B\otimes_{(R,P)}N,-)$.
Hence the spectral sequence converges to
$\Ext^{p+q}_{(S,T)}(B\otimes_{(R,P)}N,L)$, proving
Eq.~(\ref{eq:ext-spectral}).
\end{proof}

\begin{corollary}\label{cor:derived-adjunction}
Under the assumptions of Theorem~\ref{thm:ext-spectral}, suppose in
addition that $B$ is projective as a left $(S,T)$-module.
Then, for every $n\geq0$, there is a natural isomorphism
$$
 \Ext^n_{(S,T)}
 \bigl(B\otimes_{(R,P)}N,L\bigr)
 \cong
 \Ext^n_{(R,P)}
 \bigl(N,\Hom_{(S,T)}(B,L)\bigr).
$$
For $n=0$, this is the Hom-tensor adjunction in
Theorem~\ref{thm:hom-tensor}.
\end{corollary}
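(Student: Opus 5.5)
The plan is to let the spectral sequence of Theorem~\ref{thm:ext-spectral} collapse onto its bottom row. The key input is that $\Ext^q_{(S,T)}(B,L)=0$ for every $q>0$.

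By Proposition~\ref{prop:projective-injective-enveloping}, projectivity of $(B,p_B^S)$ as a left $(S,T)$-module means that $B$ is a projective left $\U_{S,T}$-module. By Proposition~\ref{prop:derived-properties}, $\Ext^q_{(S,T)}(B,L)\cong\Ext^q_{\U_{S,T}}(B,L)$, and this vanishes for $q>0$. We can also see the vanishing directly: compute $\Ext$ with the injective resolution $L\to I^\bullet$ used in the proof of Theorem~\ref{thm:ext-spectral}. Since $\Hom_{(S,T)}(B,-)$ is exact, the complex $\Hom_{(S,T)}(B,I^\bullet)$ is exact in positive degrees. Its degree-zero cohomology is $\Hom_{(S,T)}(B,L)$, carrying the left $(R,P)$-module structure of Lemma~\ref{lem:hom-reynolds-module}.

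One point needs care: the identification of $\Ext^0_{(S,T)}(B,L)$ with $\Hom_{(S,T)}(B,L)$ must be an isomorphism of left $(R,P)$-modules, not just of $\kfield$-modules. The module structure on $\Ext^0$ comes from the map $\Hom_{(S,T)}(B,L)\to\Hom_{(S,T)}(B,I^0)$ induced by $L\to I^0$. This map is a morphism of left $(R,P)$-modules, because the action $(r\cdot f)(b)=f(br)$ and the operator $p(f)(b)=f(p_B^R(b))$ are both defined by precomposition on $B$, and precomposition commutes with postcomposition along $L\to I^0$. Its image is the kernel of the next differential, so the identification respects the $(R,P)$-structure.

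With this, the $E_2$-page has $E_2^{p,q}=0$ for $q>0$ and $E_2^{p,0}=\Ext^p_{(R,P)}(N,\Hom_{(S,T)}(B,L))$. A first-quadrant spectral sequence concentrated in one row degenerates at $E_2$, and its abutment is then isomorphic to that row. This gives
$$\Ext^n_{(S,T)}(B\otimes_{(R,P)}N,L)\cong\Ext^n_{(R,P)}(N,\Hom_{(S,T)}(B,L)).$$
Naturality in $N$ and $L$ follows from the naturality of the Grothendieck spectral sequence and of the edge maps.

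For $n=0$, the edge map is the composite-functor identification that Theorem~\ref{thm:ext-spectral} uses for its abutment, namely the adjunction isomorphism of Theorem~\ref{thm:hom-tensor}. Apart from the $(R,P)$-linearity check above, nothing here is a real obstacle.
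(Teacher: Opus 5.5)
Your proposal is correct and follows the same route as the paper: projectivity of $B$ as a left $(S,T)$-module kills $\Ext^q_{(S,T)}(B,L)$ for $q>0$, so the spectral sequence of Theorem~\ref{thm:ext-spectral} is concentrated in the row $q=0$, degenerates at $E_2$, and yields the isomorphism. Your additional points fill in details the paper leaves implicit: you check that $\Ext^0_{(S,T)}(B,L)\cong\Hom_{(S,T)}(B,L)$ is an isomorphism of left $(R,P)$-modules, and you address naturality and the $n=0$ edge map.
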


\begin{proof}
Since $B$ is projective as a left $(S,T)$-module,
$\Ext^q_{(S,T)}(B,L)=0$ for $q>0$, while
$\Ext^0_{(S,T)}(B,L)=\Hom_{(S,T)}(B,L)$.
Hence the spectral sequence in Theorem~\ref{thm:ext-spectral} is
concentrated in the row $q=0$ and degenerates at $E_2$.
The stated isomorphisms follow.
\end{proof}

The following computation shows that even for the fixed underlying
algebra $\kfield$, different Reynolds operators can lead to sharply
different homological behavior.

\begin{proposition}\label{prop:one-dimensional-derived}
Let $\kfield$ be a field.
\begin{enumerate}[label=\textup{(\alph*)}]
\item Consider the Reynolds algebra $(\kfield,\id_{\kfield})$ of
weight $-1$. For $a\in\kfield$, let $\kfield_a$ denote the
one-dimensional left or right $(\kfield,\id_{\kfield})$-module on
which the module operator acts as multiplication by $a$.
Then, for $a,b\in\kfield$ and $n\geq0$,
$$
 \Ext^n_{(\kfield,\id)}(\kfield_a,\kfield_b)
 \cong
 \Tor_n^{(\kfield,\id)}(\kfield_a,\kfield_b)
 \cong
 \begin{cases}
  \kfield,& a=b\text{ and }n\in\{0,1\},\\
  0,&\text{otherwise}.
 \end{cases}
$$
For the Tor group, $\kfield_a$ is regarded as a right module and
$\kfield_b$ as a left module. In particular,
$\U_{\kfield,\id}$ has global dimension $1$.

\item Consider the Reynolds algebra $(\kfield,0)$ of weight $-1$, and let
$\kfield_0$ denote the one-dimensional left or right
$(\kfield,0)$-module with zero module operator.
Then, for every $n\geq0$,
$$
 \Ext^n_{(\kfield,0)}(\kfield_0,\kfield_0)\cong\kfield,
 \quad
 \Tor_n^{(\kfield,0)}(\kfield_0,\kfield_0)\cong\kfield.
$$
In particular, $\U_{\kfield,0}$ has infinite global dimension.
\end{enumerate}
\end{proposition}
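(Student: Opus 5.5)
The plan is to compute $\U_{\kfield,\id}$ and $\U_{\kfield,0}$ explicitly, and then transfer ordinary resolutions through Proposition~\ref{prop:derived-properties}.

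\emph{Identifying the rings.} Since $R=\kfield$, the free product $\kfield*_{\kfield}\kfield[Q]$ is just $\kfield[Q]$. Take weight $\lambda=-1$ in the relation of Eq.~(\ref{eq:enveloping-relation}).
\begin{itemize}
\item For $P=\id$, the relation reads $rQ-Qr-Q(r-r)Q=0$. This holds automatically, so $\U_{\kfield,\id}=\kfield[Q]$.
\item For $P=0$, the relation becomes $-Q\,r\,Q=0$. Taking $r=1$ gives the ideal $(Q^2)$, so $\U_{\kfield,0}=\kfield[Q]/(Q^2)$.
\end{itemize}
In both cases the module $\kfield_a$ corresponds, via Theorem~\ref{thm:category-equivalence}, to $\kfield[Q]/(Q-a)$. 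For $P=0$ only $a=0$ is allowed, because $Q^2$ must act as zero. Since both rings are commutative, left and right modules agree.

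\emph{Part (a).} Use the free resolution
\[
0\to\kfield[Q]\xrightarrow{\,Q-a\,}\kfield[Q]\to\kfield_a\to0.
\]
Applying $\Hom(-,\kfield_b)$ gives the complex $\kfield\xrightarrow{b-a}\kfield$. Applying $\kfield_b\otimes-$ gives the same map $b-a$. Hence both $\Ext^n$ and $\Tor_n$ are $\kfield$ for $n=0,1$ when $a=b$, and vanish otherwise. They also vanish for $n\ge2$, since the resolution has length one.

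For the global dimension, note that $\kfield[Q]$ is a PID, so its global dimension is at most $1$. Equality holds because $\Ext^1(\kfield_a,\kfield_a)\neq0$.

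\emph{Part (b).} Use the periodic free resolution
\[
\cdots\xrightarrow{Q}\Lambda\xrightarrow{Q}\Lambda\to\kfield_0\to0,\qquad \Lambda=\kfield[Q]/(Q^2).
\]
It is exact because $\ker(Q)=\im(Q)=Q\Lambda$ in $\Lambda$. Applying $\Hom_\Lambda(-,\kfield_0)$ and $\kfield_0\otimes_\Lambda-$ gives complexes of copies of $\kfield$ with all differentials zero, since $Q$ acts as $0$ on $\kfield_0$. So every $\Ext^n$ and $\Tor_n$ equals $\kfield$. Nonvanishing of $\Ext^n$ for all $n$ then gives infinite global dimension.

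Proposition~\ref{prop:derived-properties} transfers all of these results back to the Reynolds setting. The only point needing care is the identification of the quotient rings, in particular checking that the relation ideal for $P=0$ is exactly $(Q^2)$. This is immediate because the relation is $\kfield$-linear in $r$ and $Q$ commutes with scalars.
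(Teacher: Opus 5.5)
Your proposal is correct and follows essentially the same route as the paper. Like the paper, you read off $\U_{\kfield,\id}\cong\kfield[Q]$ and $\U_{\kfield,0}\cong\kfield[Q]/(Q^2)$ from the defining relation, then compute with the length-one resolution by $Q-a$ and the periodic resolution by $Q$. The only differences are cosmetic: for $\Tor$ you resolve $\kfield_a$ rather than $\kfield_b$, which is harmless by balance of $\Tor$ over the commutative ring $\kfield[Q]$ and because the answer is symmetric in $a,b$. You also justify the global dimension explicitly (PID gives at most $1$, and $\Ext^1(\kfield_a,\kfield_a)\neq0$ gives equality), whereas the paper simply cites it.
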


\begin{proof}
For \textup{(a)}, by the defining relation of
$\U_{\kfield,\id}$, we have
$\U_{\kfield,\id}\cong\kfield[Q]$.
Under Theorem~\ref{thm:category-equivalence},
$\kfield_a$ corresponds to $\kfield[Q]/(Q-a)$.
Since multiplication by $Q-a$ is injective, there is a free
resolution
$$
 0\longrightarrow\kfield[Q]
 \xrightarrow{\,Q-a\,}\kfield[Q]
 \longrightarrow\kfield_a\longrightarrow0.
$$
Applying $\Hom_{\kfield[Q]}(-,\kfield_b)$ gives a two-term complex
whose differential is multiplication by $b-a$.
If $a=b$, this differential is zero. If $a\neq b$, it is an
isomorphism. This gives the stated Ext groups.

For Tor, use the analogous free resolution of the left module
$\kfield_b$ and tensor it with the right module $\kfield_a$.
The resulting differential is multiplication by $a-b$, giving the
stated Tor groups. Finally, $\kfield[Q]$ has global dimension $1$.

For \textup{(b)}, the defining relation gives
$Q^2=0$, and hence
$\U_{\kfield,0}\cong\kfield[Q]/(Q^2)$.
The module $\kfield_0$ corresponds to the quotient by $(Q)$.
Consider the free resolution
$$
 \cdots\xrightarrow{\,Q\,}\U_{\kfield,0}
 \xrightarrow{\,Q\,}\U_{\kfield,0}
 \xrightarrow{\,Q\,}\U_{\kfield,0}
 \longrightarrow\kfield_0\longrightarrow0.
$$
Since the kernel and image of multiplication by $Q$ are both $(Q)$,
this sequence is exact.

Applying $\Hom_{\U_{\kfield,0}}(-,\kfield_0)$ gives one copy of
$\kfield$ in every degree with zero differentials. Tensoring the
same resolution with the right module $\kfield_0$ gives the same
result for Tor. Thus the stated Ext and Tor groups are isomorphic to
$\kfield$ in every degree, so $\kfield_0$ has infinite projective
dimension. Therefore $\U_{\kfield,0}$ has infinite global dimension.
\end{proof}

\smallskip
\noindent\textbf{Acknowledgments.}
This work was supported by the National Natural Science Foundation of
China (Grant Nos.~12671029, 12301025 and 12101316). The authors thank
Professor Li Guo for helpful discussions.

\smallskip
\noindent\textbf{Declaration of generative AI use.}
During the preparation of this work, the authors used ChatGPT (OpenAI)
to assist with language editing, checking the consistency of mathematical
notation and references, and LaTeX formatting. The authors reviewed and
edited the output as needed and take full responsibility for the content
of the article.

\smallskip
\noindent\textbf{Declaration of interests.}
The authors declare that they have no conflicts of interest.

\smallskip
\noindent\textbf{Data availability.}
No datasets were generated or analyzed in this work.

\enlargethispage{2\baselineskip}


\begin{thebibliography}{99}\sloppy

\bibitem{Atkinson1963}
F. V. Atkinson,
Some aspects of Baxter's functional equation,
\emph{J. Math. Anal. Appl.} \textbf{7} (1963), 1-30.

\bibitem{Baxter1960}
G. Baxter,
An analytic problem whose solution follows from a simple algebraic identity,
\emph{Pacific J. Math.} \textbf{10} (1960), 731-742.

\bibitem{BillikRota1960}
M. Billik and G. C. Rota,
On Reynolds operators in finite-dimensional algebras,
\emph{J. Math. Mech.} \textbf{9} (1960), 927-932.

\bibitem{Birkhoff1961}
G. Birkhoff,
Lattices in applied mathematics. II. Averaging operators,
in: \emph{Lattice Theory}, Proc. Sympos. Pure Math., Vol. 2,
Amer. Math. Soc., Providence, RI, 1961, pp. 163-184.


\bibitem{Das2021Lie}
A. Das,
Twisted Rota-Baxter operators and Reynolds operators on Lie algebras and NS-Lie algebras,
\emph{J. Math. Phys.} \textbf{62} (2021), no.~9, 091701, 23 pp.

\bibitem{DerksenKemper2015}
H. Derksen and G. Kemper,
\emph{Computational Invariant Theory},
second enlarged ed., Encyclopaedia of Mathematical Sciences, Vol. 130,
Springer, Heidelberg, 2015.

\bibitem{Dubreil1957}
M. L. Dubreil-Jacotin,
\emph{\'Etude alg\'ebrique des transformations de Reynolds},
in: Colloque d'alg\`ebre sup\'erieure, Bruxelles, 1956, pp. 9-27, 1957.

\bibitem{Eilenberg1961}
S. Eilenberg,
Abstract description of some basic functors,
\emph{J. Indian Math. Soc. (N.S.)} \textbf{24} (1960), 231-234 (1961).


\bibitem{Guo2012}
L. Guo,
\emph{An Introduction to Rota-Baxter Algebra},
Surveys of Modern Mathematics, Vol. 4,
International Press, Somerville, MA, 2012.

\bibitem{GuoGustavsonLi2024}
L. Guo, R. Gustavson and Y. Li,
Generalized Reynolds algebras from Volterra integrals and their free construction by complete shuffle product,
arXiv:2411.02633 (2024).

\bibitem{GuoLin2021}
L. Guo and Z. Lin,
Representations and modules of Rota-Baxter algebras,
\emph{Asian J. Math.} \textbf{25} (2021), no.~6, 841-870.

\bibitem{GuoZhang2023}
S. J. Guo and Y. Zhang,
The cohomology of relative cocycle weighted Reynolds operators and NS-pre-Lie algebras,
\emph{Comm. Algebra} \textbf{51} (2023), no.~12, 5313-5331.

\bibitem{HPZ}
J. He, X. S. Peng and Y. Zhang,
Multiple Rota-Baxter algebra and multiple Rota-Baxter modules,
arXiv:2504.16643 (2025).


\bibitem{HouSheng2023}
S. Hou and Y. Sheng,
Cohomologies of Reynolds operators on $n$-Lie algebras,
\emph{Comm. Algebra} \textbf{51} (2023), no.~2, 779-798.


\bibitem{Lam1999}
T. Y. Lam,
\emph{Lectures on Modules and Rings},
Graduate Texts in Mathematics, Vol. 189,
Springer-Verlag, New York, 1999.


\bibitem{Maithani2026}
A. Maithani,
Splitting the difference: Computations of the Reynolds operator in classical invariant theory,
\emph{Bull. Lond. Math. Soc.} \textbf{58} (2026), no.~1, e70175.

\bibitem{Miller1966}
J. B. Miller,
Averaging and Reynolds operators on Banach algebras. I. Representation by derivations and antiderivations,
\emph{J. Math. Anal. Appl.} \textbf{14} (1966), 527-548.

\bibitem{Morita1958}
K. Morita,
Duality for modules and its applications to the theory of rings with minimum condition,
\emph{Sci. Rep. Tokyo Kyoiku Daigaku Sect. A} \textbf{6} (1958), no.~150, 83-142.

\bibitem{MumfordFogartyKirwan1994}
D. Mumford, J. Fogarty and F. Kirwan,
\emph{Geometric Invariant Theory},
third enlarged ed., Ergebnisse der Mathematik und ihrer Grenzgebiete (2), Vol. 34,
Springer-Verlag, Berlin, 1994.

\bibitem{PeiGuo2015}
J. Pei and L. Guo,
Averaging algebras, Schr\"oder numbers, rooted trees and operads,
\emph{J. Algebraic Combin.} \textbf{42} (2015), 73-109.


\bibitem{QiaoGaoGuo2019}
L. Qiao, X. Gao and L. Guo,
Rota-Baxter modules toward derived functors,
\emph{Algebr. Represent. Theory} \textbf{22} (2019), no.~2, 321-343.

\bibitem{QiaoPei2026}
L. Qiao and J. Pei,
Differential modules toward derived functors,
\emph{Front. Math.} \textbf{21} (2026), no.~3, 599-622.

\bibitem{Reynolds1895}
O. Reynolds,
On the dynamical theory of incompressible viscous fluids and the determination of the criterion,
\emph{Philos. Trans. Roy. Soc. London Ser. A} \textbf{186} (1895), 123-164.

\bibitem{Rota1964}
G. C. Rota,
Reynolds operators,
in: \emph{Proc. Sympos. Appl. Math.}, Vol. 16,
Amer. Math. Soc., Providence, RI, 1964, pp. 70-83.


\bibitem{Watts1960}
C. E. Watts,
Intrinsic characterizations of some additive functors,
\emph{Proc. Amer. Math. Soc.} \textbf{11} (1960), 5-8.

\bibitem{Weibel1994}
C. A. Weibel,
\emph{An Introduction to Homological Algebra},
Cambridge Studies in Advanced Mathematics, Vol. 38,
Cambridge University Press, Cambridge, 1994.

\bibitem{ZhangGaoGuo2021}
T. Zhang, X. Gao and L. Guo,
Reynolds algebras and their free objects from bracketed words and rooted trees,
\emph{J. Pure Appl. Algebra} \textbf{225} (2021), no.~12, 106766.

\bibitem{ZhangGaoGuo2023}
T. Zhang, X. Gao and L. Guo,
Construction of free commutative Reynolds algebras by Gr\"obner-Shirshov bases,
\emph{J. Symbolic Comput.} \textbf{119} (2023), 64-80.

\end{thebibliography}
\end{document}